\def\eqref#1{equation~\ref{#1}}
\def\1{\bm{1}}
\DeclareMathAlphabet{\mathsfit}{\encodingdefault}{\sfdefault}{m}{sl}
\SetMathAlphabet{\mathsfit}{bold}{\encodingdefault}{\sfdefault}{bx}{n}
\def\gD{{\mathcal{D}}}
\def\gL{{\mathcal{L}}}
\DeclareMathOperator*{\argmin}{arg\,min}
\newtheorem{theorem}{Theorem}
\newtheorem{lemma}{Lemma} 
\newtheorem{assumption}{Assumption}
\newtheorem{corollary}{Corollary} 
\newtheorem{definition}{Definition}
\crefname{assumption}{assumption}{assumptions}
\theoremstyle{remark}
\newcommand{\cS}{\mathcal{S}}
\title{A Primal-Dual  Approach to Bilevel Optimization with Multiple Inner Minima}
\author{Daouda Sow$^{1}$, Kaiyi Ji$^2$, Ziwei Guan$^{1}$, Yingbin Liang$^{1}$\\
$^{1}$Department of ECE, The Ohio State University\\ 
$^{2}$Department of EECS, University of Michigan, Ann Arbor\\
\texttt{sow.53@osu.edu, kaiyiji@umich.edu, liang889@osu.edu} \\ 
}
\begin{document}

\maketitle

\begin{abstract}
\noindent Bilevel optimization has found extensive applications in modern machine learning problems such as hyperparameter optimization, neural architecture search, meta-learning, etc. While bilevel problems with a unique inner minimal point (e.g., where the inner function is strongly convex) are well understood, such a problem with multiple inner minimal points remains to be challenging and open. Existing algorithms designed for such a problem were applicable to restricted situations and do not come with a full guarantee of convergence. In this paper, we adopt a reformulation of bilevel optimization to constrained optimization, and solve the problem via a primal-dual bilevel optimization (PDBO) algorithm. PDBO not only addresses the multiple inner minima challenge, but also features fully first-order efficiency without involving second-order Hessian and Jacobian computations, as opposed to most existing gradient-based bilevel algorithms. We further characterize the convergence rate of PDBO, which serves as the first known non-asymptotic convergence guarantee for bilevel optimization with multiple inner minima. Our experiments demonstrate desired performance of the proposed approach.

\end{abstract}

\section{Introduction}
	Bilevel optimization has received extensive attention recently due to its applications in a variety of modern machine learning problems. Typically, parameters handled by bilevel optimization are divided into two different types such as meta and base learners in few-shot meta-learning~\cite{bertinetto2018meta,rajeswaran2019meta}, hyperparameters and model parameters training in automated hyperparameter tuning~\cite{franceschi2018bilevel,shaban2019truncated}, actors and critics in reinforcement learning~\cite{konda2000actor,hong2020two}, and model architectures and weights in neural architecture search~\cite{liu2018darts}. 
	
	Mathematically, bilevel optimization captures intrinsic hierarchical structures in those machine learning models, and can be formulated into the following two-level problem:
	\begin{align}\label{eq:prob}
		\min_{x \in \mathcal{X}, y \in \mathcal{S}_x} f(x, y)
		\quad\mbox{with} \quad \mathcal{S}_x = \argmin_{y \in \mathcal{Y}} g(x,y),
	\end{align}
	where $f(x, y)$ and $g(x, y)$, the outer- and inner-level objective functions, are continuously differentiable, and the supports $\mathcal{X}\subseteq \mathbb{R}^p$ and $\mathcal{Y}\subseteq \mathbb{R}^d$ are convex, closed and bounded. For a fixed $x\in\mathcal{X}$, $\mathcal{S}_x$ is the set of all $y\in\mathcal{Y}$ that yields the minimal value of $g(x, \cdot)$.
	
	A broad collection of approaches have been proposed to solve the bilevel problem in~\cref{eq:prob}. Among them, gradient based algorithms have shown great effectiveness and efficiency in various deep learning applications, which include approximated implicit differentiation (AID) based methods~\citep{domke2012generic,pedregosa2016hyperparameter,gould2016differentiating,liao2018reviving,lorraine2020optimizing,ji2021bo} and iterative differentiation (ITD) (or dynamic system) based methods~\citep{maclaurin2015gradient,franceschi2017forward,shaban2019truncated,grazzi2020iteration,liu2020generic,liu2021value}. 
	Many stochastic bilevel algorithms have been proposed recently via stochastic gradients~\cite{ghadimi2018approximation,hong2020two,ji2021bo}, and variance reduction~\cite{yang2021provably} and momentum~\cite{chen2021single,khanduri2021near,guo2021randomized}.

	Most of these studies rely on the simplification that for each outer variable $x$, the inner-level problem has a {\bf single} global minimal point. The studies for a more challenging scenario with {\bf multiple} inner-level solutions (i.e., $\mathcal{S}_x$ has multiple elements) are rather limited. In fact, a counter example has been provided in \cite{liu2020generic} to illustrate that simply applying algorithms designed for the  single inner minima case will fail to optimize bilevel problems with multiple inner minima. Thus, bilevel problems with multiple inner minima deserve serious efforts of exploration. Recent studies \cite{liu2020generic,li2020improved} proposed a gradient aggregation method and another study \cite{liu2021value} proposed a value-function-based method from a constrained optimization view to address the issue of multiple inner minima. However, all of these approaches take a {\em double-level} optimization structure, updating the outer variable $x$ after fully updating $y$ over the inner and outer functions, which could lose efficiency and cause difficulty in implementations. Further, these approaches have been provided with only the {\em asymptotic} convergence guarantee without characterization of the convergence rate.

	{\em The focus of this paper is to develop a better-structured bilevel optimization algorithm, which handles the multiple inner minima challenge and comes with a finite-time convergence rate guarantee. }

	\subsection{Our Contributions}
	In this paper, we adopt a reformulation of bilevel optimization to constrained optimization \cite{dempe2020bilevel}, and 
	propose a novel primal-dual algorithm to solve the problem, which provably converges to an $\epsilon$-accurate KKT point. The specific contributions are summarized as follows.

	{\bf Algorithmic design.} We propose a simple and easy-to-implement primal-dual bilevel optimization (PDBO) algorithm, and further generalizes PDBO to its proximal version called Proximal-PDBO. 
 
Differently from existing bilevel methods designed for handling multiple inner minima in \cite{liu2020generic,li2020improved,liu2021value} that update variables $x$ and $y$ in a {\em nested} manner, both PDBO and Proximal-PDBO update $x$ and $y$ simultaneously as a single variable $z$ and hence admit a much simpler implementation. In addition, both algorithms do not involve any second-order information of the inner and outer functions $f$ and $g$, as apposed to many AID- and ITD-based approaches, and hence are computationally more efficient.

	{\bf Convergence rate analysis.} We provide the convergence rate analysis for PDBO and Proximal-PDBO, which serves as the first-known convergence rate guarantee for bilevel optimization with multiple inner-level minima. For PDBO, we first show that PDBO converges to an optimal solution of the associated constrained optimization problem under certain convexity-type conditions. Then, for nonconvex $f$ and convex $g$ on $y$, we show that the more sophisticated Proximal-PDBO algorithm achieves an $\epsilon$-KKT point of the reformulated constrained optimization problem for any arbitrary $\epsilon>0$ with a sublinear convergence rate. Here, the KKT condition serves as a necessary optimality condition for the bilevel problem. Technically, the reformulated constrained problem here is more challenging than the standard constrained optimization problem studied in \cite{boob2019stochastic,ma2019proximally} due to the nature of bilevel optimization. Specifically, our analysis needs to deal with the bias errors arising in gradient estimations for the updates of both the primal and dual variables. Further, we establish uniform upper bound on optimal dual variables, which was taken as an assumption in the standard analysis \cite{boob2019stochastic}.
	
	{\bf Empirical performance.} In two synthetic experiments with intrinsic multiple inner minima, we show that our  algorithm converges to the global minimizer, whereas AID- and ITD-based methods are stuck in local minima. We further demonstrate the effectiveness and better performance of our algorithm in hyperparameter optimization. 
	
	\subsection{Related Works}
	
	\noindent {\bf Bilevel optimization via AID and ITD.} AID and ITD are two popular approaches to reduce the computational challenging in  approximating the outer-level gradient (which is often called hypergradient in the literature). In particular, AID-based bilevel algorithms~\citep{domke2012generic,pedregosa2016hyperparameter,gould2016differentiating,liao2018reviving,grazzi2020iteration,lorraine2020optimizing,ji2021lower,mackay2019self} approximate the hypergraident efficiently via implicit differentiation combined with a linear system solver. ITD-based approaches~\citep{domke2012generic,maclaurin2015gradient,franceschi2017forward,franceschi2018bilevel,shaban2019truncated,grazzi2020iteration,mackay2019self} approximate the inner-level problem using a dynamic system. For example,~\cite{franceschi2017forward,franceschi2018bilevel} computed the hypergradient via reverse or forward mode in automatic differentiation. This paper proposes a novel contrained optimization based approach for bilevel optimization.

	\noindent {\bf Optimization theory for bilevel optimization.} Some works such as~\citep{franceschi2018bilevel,shaban2019truncated} analyzed the 
	asymptotic convergence performance of AID- and ITD-based bilevel algorithms. Other works \cite{ghadimi2018approximation,rajeswaran2019meta,grazzi2020bo,ji2020convergence,ji2021bo,ji2021lower} provided convergence rate analysis for various AID- and ITD-based approaches and their variants in applications such as meta-learning. Recent works \cite{hong2020two,ji2021bo,yang2021provably,khanduri2021near,chen2021single,guo2021randomized} developed convergence rate analysis for their proposed stochastic bilevel optimizers. This paper provides the first-known convergence rate analysis for the setting with multiple inner minima. 
	
	\noindent {\bf Bilevel optimization with multiple inner minima.} Sabach and Shtern in \cite{sabach2017first} proposed a bilevel gradient sequential averaging method (BiG-SAM) for {\em single}-variable bilevel optimization (i.e., without  variable $x$), and provided an asymptotic convergence analysis for this algorithm. For general bilevel problems, the authors in \cite{liu2020generic,li2020improved} used an idea similar to BiG-SAM, and proposed a gradient aggregation approach for the general bilevel problem in \cref{eq:prob} with an asymptotic convergence guarantee. Further, Liu et. al. \cite{liu2021value} proposed a constrained optimization method and further applied the log-barrier interior-point method for solving the constrained problem. Differently from the above studies that update the outer variable $x$ after fully updating $y$, our PDBO and Proximal-PDBO algorithms treat both $x$ and $y$ together as a single updating variable $z$. Further, we characterize the first known convergence rate guarantee for the type of bilevel problems with multiple inner minima.

We further mention that	Liu et al. in \cite{liu2021towards} proposed an initialization auxiliary algorithm for the bilevel problems with a nonconvex inner objective function.

\section{Problem Formulation}\label{sec:formulation}
	We study a bilevel optimization problem given in \cref{eq:prob}, which is restated below 
	\begin{align*}
		\min_{x \in \mathcal{X}, y \in \mathcal{S}_x} f(x, y)
		\quad\mbox{with} \quad \mathcal{S}_x = \argmin_{y \in \mathcal{Y}} g(x,y),
	\end{align*}
	where the outer- and inner-level objective functions $f(x, y)$ and $g(x, y)$ are continuously differentiable, and the supports $\mathcal{X}$ and $\mathcal{Y}$ are convex and closed subsets of $\mathbb{R}^p$ and $\mathbb{R}^d$, respectively. For a fixed $x\in\mathcal{X}$, $\mathcal{S}_x$ is the set of all $y\in\mathcal{Y}$ that yields the minimal value of $g(x, \cdot)$. In this paper, we consider the function $g$ that is a convex function on $y$ for any fixed $x$ (as specified in \Cref{ass:smoothness}).  
	The convexity of $g(x, y)$ on $y$ still allows  the inner function $g(x, \cdot)$ to have multiple global minimal points, and the challenge for bilevel algorithm design due to multiple inner minima still remains. 
	Further, the set $\mathcal{S}_x$ of minimizers is convex due to convexity of $g(x, y)$ w.r.t.\ $y$. We note that $g(x,y)$ and the outer function $f(x,y)$ can be nonconvex w.r.t $(x, y)$ in general or satisfy certain convexity-type conditions which we will specify for individual cases. We further take the standard gradient Lipschitz assumption on the inner and outer objective functions. The formal statements of our assumptions are presented below.
	\begin{assumption}\label{ass:smoothness}
		The objective functions $f(x,y)$ and  $g(x,y)$ are gradient Lipschitz functions. There exists $\rho_f, \rho_g\ge 0$, such that, for any $z=(x, y)\in\mathcal{X}\times\mathcal{Y}$ and $z^\prime=(x^\prime, y^\prime)\in\mathcal{X}\times\mathcal{Y}$, the following inequalities hold
		\begin{align*}
				&\|\nabla f(z) - \nabla f(z^\prime)\|_2 \le \rho_f\|z - z^\prime\|_2,
				&\|\nabla g(z) - \nabla g(z^\prime)\|_2 \le \rho_g\|z -z^\prime\|_2.
		\end{align*}
		Moreover, for any fixed $x\in\mathcal{X}$, we assume $g(x,y)$ is a convex function on $y$, and the following inequality holds for all $x, x^\prime\in\mathcal{X}$ and $y, y^\prime\in\mathcal{Y}$
		\begin{equation*}
		    g(x^\prime, y^\prime)\ge g(x, y) + \langle\nabla_x g(x, y), x^\prime-x\rangle + \langle \nabla_y g(x, y), y^\prime -y\rangle - \tfrac{\rho_g}{2}\|x - x^\prime\|_2^2.
		\end{equation*}
 
	\end{assumption}

	To solve the bilevel problem in \cref{eq:prob}, one challenge is that it is not easy to explicitly characterize the set $\cS_x$ of the minimal points of $g(x,y)$. This motivates the idea to describe such a set implicitly via a constraint. A common practice is to utilize the so-called lower-level value function (LLVF) to reformulate the problem to an equivalent single-level optimization~\cite{dempe2020bilevel}. Specifically, let $g^*(x) \coloneqq \min_{y\in\mathcal{Y}} g(x, y)$. Clearly, the set $\mathcal{S}_x$ can be described as $\mathcal{S}_x=\{y\in \mathcal{Y}: g(x,y)\leq g^*(x)\}$. In this way, the bilevel problem in \cref{eq:prob} can be equivalently reformulated to the following constrained optimization problem:
	\begin{align*}
		\min_{x\in\mathcal{X},y\in\mathcal{Y}} f(x,y)\quad \text{s.t.} \quad  g(x,y)\leq g^*(x).
	\end{align*}
	Since $g(x,y)$ is convex with respect to $y$, $g^*(x)$ in the constraint can be obtained efficiently via various convex minimization algorithms such as gradient descent. 
	
	To further simplify the notation, we let $z = (x, y)\in\mathbb{R}^{p+d}$, $\mathcal{Z} = \mathcal{X}\times\mathcal{Y}$, $f(z):=f(x,y)$, $g(z):=g(x,y)$, and $g^*(z):= g^*(x) $. 
	As stated earlier, both $\mathcal{X}$ and $\mathcal{Y}$ are bounded and closed set. The boundedness of $\mathcal{Z}$ is then immediately established, and we denote $D_\mathcal{Z} =\sup_{z, z^\prime\in\mathcal{Z}}\|z-z^\prime\|_2$.  The equivalent single-level constrained optimization could be expressed as follows.
	\begin{align}\label{eq:reformulation}
		\min_{z\in\mathcal{Z}} f(z) \quad \mbox{s.t. }\quad h(z) \coloneqq g(z) -  g^*(z)\le 0.
	\end{align}
	Thus, solving the bilevel problem in \cref{eq:prob} is converted to solving an equivalent single-level optimization in \cref{eq:reformulation}.  
	To enable the algorithm design for the constrained optimization problem in \cref{eq:reformulation}, we further make two standard changes to the constraint function. 
	(i) Since the constraint is nonsmooth, i.e., $g^*(z):=g^*(x)$ is nonsmooth in general, the design of gradient-based algorithm is not direct.  
	We thus relax the constraint by replacing $g^*(z)$ with a smooth term \[\tilde{g}^*(z):=\tilde{g}^*(x) = \min\nolimits_{y \in \mathcal{Y}} \big\{\tilde{g}(x,y) \coloneqq g(x, y) + \tfrac{\alpha}{2}\|y\|^2\big\},\] 
	where $\alpha>0$ is a small prescribed constant.  
	It can be shown that for a given $x$, $\tilde{g}(x,y)$ has a unique minimal point, and the function $\tilde{g}^*(x)$ becomes differentiable with respect to $x$. Hence, the constraint becomes $g(z)- \tilde{g}^*(z)\leq 0$, which is differentiable. 
	(ii) The constraint is not sufficiently strictly feasible, i.e., the constraint cannot be satisfied with a certain margin on every $x\in\mathcal{X}$, due to which it is difficult to design an algorithm with convergence guarantee. We hence further relax the constraint by introducing a positive small constant $\delta$ so that the constraint becomes $g(z) -  \tilde{g}^*(z) - \delta \le 0$ that admits strict feasible points such that the constraint is less than $-\delta$.  
Given the above two relaxations, our algorithm design will be based on the following best-structured constrained optimization problem
	\begin{align}\label{eq:reform3}
		\min_{z\in\mathcal{Z}} f(z)  \quad
		\mbox{s.t.}\quad \tilde{h}(z) \coloneqq g(z) -  \tilde{g}^*(z) - \delta \le 0.
	\end{align}

\section{Primal-Dual Bilevel Optimizer (PDBO)}\label{sec:pd}

In this section, we first propose a simple PDBO algorithm, and then show that PDBO converges under certain convexity-type conditions. We handle more general $f$ and $g$ in \Cref{sec:proximalpdbo}. 
	
\subsection{PDBO Algorithm}	
	To solve the constrained optimization problem in \cref{eq:reform3}, we employ the primal-dual approach. The idea is to consider 
	the following dual problem 
	\begin{align}\label{eq:minimax1}
		\max_{\lambda\ge 0}\min_{z\in\mathcal{Z}}\ \mathcal{L}(z, \lambda) = f(z) + \lambda \tilde{h}(z),
	\end{align}
	where $\mathcal{L}(z, \lambda)$ is called the Lagrangian function and $\lambda$ is the dual variable. A simple approach to solving the minimax dual problem in \cref{eq:minimax1} is via the gradient descent and ascent method, which yields our algorithm of primal-dual bilevel optimizer (PDBO) (see \Cref{alg:saddle}).
	
		\begin{algorithm}[h]
		\caption{Primal-Dual Bilevel Optimizer (PDBO)}  
		\small
		\label{alg:saddle}
		\begin{algorithmic}[1]
			\STATE {\bfseries Input:}  Stepsizes $\eta_t$ and $\tau_t$, $\theta_t$, output weights $\gamma_t$, initialization $z_0, {\lambda}_0$, and number $T$ of iterations  
			\FOR{$t=0,1,...,T-1$} 
			\STATE{Conduct projected gradient descent in \cref{eq:projectedgradientdescent} for $N$ times with any given $\hat{y}_0$ as initialization}
			\STATE{Update $\lambda_{t+1}$ according to \cref{eq:lambdaupdate1}} 
			\STATE{Update $z_{t+1}$ according to \cref{eq:zupdate1}}\label{step:z} 
			\ENDFOR
			\STATE {\bfseries Output:} $\bar{z} = \tfrac{1}{\Gamma_T}\sum_{t=0}^{T-1}\gamma_tz_{t+1}$, with $\Gamma_T = \sum_{t=0}^{T-1}\gamma_t$
		\end{algorithmic}
	\end{algorithm}

	More specifically, the update of the dual variable $\lambda$ is via the gradient of Lagrangian w.r.t.\ $\lambda$ given by ${\nabla}_{\lambda} \mathcal{L} (z, \lambda)=\tilde h(z)$, and the update of the primal variable $z$ is via the gradient of Lagrangian w.r.t.\ $z$ given by ${\nabla}_{z} \mathcal{L} (z, \lambda)=\nabla f(z) + \lambda \nabla\tilde h(z)$.  
	Here, the differentiability of $\tilde{h}(z)$ benefits from the constraint smoothing. In particular, suppose \Cref{ass:smoothness} holds. Then, it can be easily shown that $\tilde{h}(z)$ is differentiable and  
	$ \nabla_x \tilde{h}(x,y) = \nabla_x g(x,y) - \nabla_x g(x,\tilde{y}^*(x))$, 
	where $\tilde{y}^*(x) = \argmin_{y \in \mathcal{Y}} \tilde{g}(x,y)$ is the unique minimal point of $\tilde{g}(x,y)$. Together with the fact that $\nabla_y \tilde{h}(z) = \nabla_y g(x, y)$, we have $\nabla \tilde{h}(z) = \left(\nabla_x g(x,y) - \nabla_x g(x,\tilde{y}^*(x)); \nabla_y g(x,y)\right)$. Since $\tilde{y}^*(x)$ is the minimal point of the inner problem: $\min_{y\in\mathcal{Y}} g(x_t, y) + \tfrac{\alpha}{2}\|y\|_2^2$, we conduct $N$ steps of projected gradient descent
	\begin{equation}\label{eq:projectedgradientdescent}
		\hat{y}_{n+1} = \Pi_{\mathcal{Y}}\big(\hat{y}_n -  \tfrac{2}{\rho_g+ 2\alpha}\left(\nabla_y g(x_t, \hat{y}_n) + \alpha \hat{y}_n\right)\big),  
	\end{equation} 
	and take $\hat{y}_N$ as an estimate of $\tilde{y}^*(x_t)$. Since the inner function $\tilde{g}(x,y)$ is $\alpha$-strongly convex w.r.t.\ $y$, updates in \cref{eq:projectedgradientdescent} converge exponentially fast to $\tilde{y}^*(x_t)$ w.r.t.\ $N$. Hence, with only a few steps, we can obtain a good estimate. With the output $\hat{y}_N$ of \cref{eq:projectedgradientdescent} as the estimate of $\tilde{y}^*(x_t)$, we conduct the accelerated projected gradient ascent and projected gradient descent as follows:
	\begin{align}
		\lambda_{t+1} &= \Pi_\Lambda\big(\lambda_t + \tfrac{1}{\tau_t}\big((1+\theta_t)\hat{h}(z_t) - \theta_t\hat{h} (z_{t-1})\big)\big),\label{eq:lambdaupdate1}\\ 
		z_{t+1} &=\Pi_{\mathcal{Z}} \big(z_t - \tfrac{1}{\eta_t}\big(\nabla f(z_t) + \lambda_{t+1}\hat{\nabla}\tilde{h}(z_t)\big) \big),\label{eq:zupdate1}
	\end{align}
	where $\tfrac{1}{\tau_t}$, $\tfrac{1}{\eta_t}$ are the stepsizes, $\theta_t$ is the acceleration weight, $\hat{h}(z_t)=g(x_t, y_t) - \tilde{g}(x_t, \hat{y}_N)$, $\hat{\nabla} \tilde{h}(z_t)= \nabla g(z_t) - \left(\nabla_x g(x_t, \hat{y}_N);\mathbf{0}_d\right)$, and $\Lambda = [0, B]$, with $B>0$ being a prescribed constant.

\subsection{Convergence Rate of PDBO}

As formulated in \Cref{sec:formulation}, the problem in \cref{eq:reform3} in general can be a nonconvex objective and nonconvex constrained optimization under \Cref{ass:smoothness}. For such a problem, the gradient descent with respect to $z$ can guarantee only the convergence $\|\nabla_z \mathcal{L}(z, \lambda)\|_2\to 0$. Here, the updates of $\lambda$ change only the weight that the gradient of the constraint contributes to the gradient of the Lagrangian, which does not necessarily imply the convergence of the function value of the constraint. Thus, we anticipate PDBO to converge under further geometric requirements as stated below. 
\begin{assumption}\label{ass:strongassp}
     The objective function $f(z)$ is a $\mu$-strongly convex function with respect to $z$, and the constrained function $\tilde{h}(z)$ is a convex function on $z$.  
\end{assumption} 
Under \Cref{ass:strongassp}, the global optimal point exists and is unique. Let such a point be $z^*$. We provides the convergence result with respect to such a point below.   
\begin{theorem}\label{thm:pdboconvergence}
    Suppose \Cref{ass:smoothness,ass:strongassp} hold. Consider \Cref{alg:saddle}.  Let $B>0$ be some large enough constant, $\gamma_t = \mathcal{O}(t)$, $\eta_t = \mathcal{O}(t)$, $\tau_t=\mathcal{O}(\tfrac{1}{t})$ and $\theta_t = \gamma_{t+1}/\gamma_t$, where the exact expressions can be found in the appendix. Then, the output $\bar{z}$ of PDBO converges to $z^*$, which satisfies
		\begin{equation*}
			\max\{f(\bar{z}) - f(z^*), [\tilde{h}(\bar{z})]_+, \|\bar{z} - {z}^*\|_2^2\}	\le \mathcal{O}\left(\tfrac{1}{T^2}\right) + \mathcal{O}\left({e^{-N}}\right), \mbox{ with } [x]_+=\max\{x, 0\}.
		\end{equation*}
\end{theorem}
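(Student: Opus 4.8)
The plan is to analyze \cref{eq:reform3} through its saddle-point reformulation \cref{eq:minimax1} and to bound the primal-dual gap of the averaged iterate $\bar z$. Under \Cref{ass:strongassp}, $\mathcal{L}(\cdot,\lambda)$ is $\mu$-strongly convex for every $\lambda\ge 0$ while $\mathcal{L}(z,\cdot)$ is linear (hence concave) in $\lambda$; strong duality therefore holds and the saddle point $(z^*,\lambda^*)$ is unique, with $z^*$ the global minimizer in the theorem. The target quantity is the weighted gap $\mathcal{L}(\bar z,\lambda)-\mathcal{L}(z^*,\bar\lambda)$, which I will control uniformly over $\lambda\in[0,B]$ and then decode into the three terms of the theorem.

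First I would derive two one-step inequalities. Viewing the primal step \cref{eq:zupdate1} as a projected gradient step on $\mathcal{L}(\cdot,\lambda_{t+1})$, the three-point (prox) inequality together with the smoothness and $\mu$-strong convexity of $z\mapsto\mathcal{L}(z,\lambda_{t+1})$ (the smoothness constant depending on $\rho_f,\rho_g,\alpha$ and $B$) yields, for any $z\in\mathcal{Z}$,
\[
\mathcal{L}(z_{t+1},\lambda_{t+1})-\mathcal{L}(z,\lambda_{t+1})\le \tfrac{\eta_t}{2}\|z_t-z\|_2^2-\tfrac{\eta_t+\mu}{2}\|z_{t+1}-z\|_2^2-c\,\|z_{t+1}-z_t\|_2^2+e_t^{\mathrm{p}},
\]
where $c>0$ and $e_t^{\mathrm{p}}$ collects the inner-loop bias in $\hat\nabla\tilde h(z_t)$. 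For the dual step \cref{eq:lambdaupdate1}, since $\nabla_\lambda\mathcal{L}=\tilde h(z)$ is linear in $\lambda$, the same three-point inequality applied to the extrapolated estimate $(1+\theta_t)\hat h(z_t)-\theta_t\hat h(z_{t-1})$ gives, for any $\lambda\in[0,B]$, a matching bound in $\|\lambda_t-\lambda\|_2^2-\|\lambda_{t+1}-\lambda\|_2^2$ plus a dual bias $e_t^{\mathrm{d}}$ and the extrapolation cross terms proportional to $(\lambda_{t+1}-\lambda)\big(\hat h(z_t)-\hat h(z_{t-1})\big)$.

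Next I would multiply both inequalities by $\gamma_t$, sum over $t=0,\dots,T-1$, and use $\theta_t=\gamma_{t+1}/\gamma_t$ so that the extrapolated constraint terms telescope in the Chambolle--Pock manner: the leftover cross term at step $t$ is canceled by the contribution of step $t+1$, up to a negative remainder absorbed by the $\|z_{t+1}-z_t\|_2^2$ and $\|\lambda_{t+1}-\lambda_t\|_2^2$ penalties under the stepsize choice $\eta_t=\mathcal{O}(t)$, $\tau_t=\mathcal{O}(1/t)$. Convexity of $f$ and $\tilde h$ with Jensen's inequality for $\bar z=\tfrac1{\Gamma_T}\sum_t\gamma_t z_{t+1}$ then turns the telescoped left-hand side into $\Gamma_T\big(\mathcal{L}(\bar z,\lambda)-\mathcal{L}(z^*,\bar\lambda)\big)$, while the right-hand side reduces to initial terms plus $\sum_t\gamma_t(e_t^{\mathrm p}+e_t^{\mathrm d})$. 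Since $\gamma_t=\mathcal{O}(t)$ gives $\Gamma_T=\mathcal{O}(T^2)$, dividing by $\Gamma_T$ produces the $\mathcal{O}(1/T^2)$ rate. To decode the gap $\epsilon_T$: $\lambda=0$ gives $f(\bar z)-f(z^*)\le\epsilon_T$ using $\bar\lambda\tilde h(z^*)\le0$; choosing $\lambda=B$ together with a uniform upper bound $\lambda^*<B$ and the KKT relation at $z^*$ isolates $[\tilde h(\bar z)]_+=\mathcal{O}(\epsilon_T)$; and $\mu$-strong convexity of $\mathcal{L}(\cdot,\lambda^*)$ with $z^*$ its minimizer gives $\tfrac{\mu}{2}\|\bar z-z^*\|_2^2\le \mathcal{L}(\bar z,\lambda^*)-\mathcal{L}(z^*,\lambda^*)\le\epsilon_T$.

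The two steps I expect to be the crux are the bias analysis and the dual bound. For the bias, $N$ steps of \cref{eq:projectedgradientdescent} on the $\alpha$-strongly convex $\tilde g(x_t,\cdot)$ give $\|\hat y_N-\tilde y^*(x_t)\|_2=\mathcal{O}(e^{-N})$; Lipschitz continuity of $g$ and $\nabla g$ (\Cref{ass:smoothness}) then transfers this to $|\hat h(z_t)-\tilde h(z_t)|$ and $\|\hat\nabla\tilde h(z_t)-\nabla\tilde h(z_t)\|_2$, so each $e_t^{\mathrm p},e_t^{\mathrm d}$ is $\mathcal{O}(e^{-N})$ uniformly (using $\lambda_{t+1}\le B$ and the bounded domain $D_\mathcal{Z}$), and the weighted average $\tfrac1{\Gamma_T}\sum_t\gamma_t\,\mathcal{O}(e^{-N})=\mathcal{O}(e^{-N})$ yields the second error term without degrading the rate. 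The uniform bound on $\lambda^*$ follows from Slater's condition: the $\delta$-relaxation in \cref{eq:reform3} guarantees a strictly feasible point with margin $\delta$, whence $\lambda^*$ is bounded by a ratio of the objective range over $\delta$, which justifies taking $B$ large enough and is precisely the quantity assumed in \cite{boob2019stochastic}. Once these two pieces are in place, the accelerated telescoping is routine bookkeeping.
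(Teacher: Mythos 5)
Your proposal is correct and follows essentially the same route as the paper's proof: a primal--dual gap function controlled via the three-point lemma applied to both updates, weighted telescoping of the extrapolated constraint terms, an $\mathcal{O}(e^{-N})$ bias analysis from the linear convergence of the inner projected gradient descent on the $\alpha$-strongly convex $\tilde g(x_t,\cdot)$, and a Slater-based uniform bound $\lambda^*\le D_f/\delta$ to justify the choice of $B$ and decode the gap into the three claimed quantities. The only (harmless) deviation is your final step for $\|\bar z-z^*\|_2^2$ via $\mu$-strong convexity of $\mathcal{L}(\cdot,\lambda^*)$, whereas the paper extracts it from the leftover negative quadratic term in the telescoped bound with $Q(\bar w,w^*)\ge 0$; both are valid.
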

\Cref{thm:pdboconvergence} indicates that all of the optimality gap $f(\bar{z}) - f(z^*)$, the constraint violation $[\tilde{h}(\bar{z})]_+$, and the squared distance $\|\bar{z} - {z}^*\|_2^2$ between the output and the optimal point converge sublinearly as the number of iterations enlarges. In particular, the first term of the bound captures the accumulated gap of the outer function values among iterations, and the second term is due to the biased estimation of  $\tilde{y}^*(x_t)$ in each iteration.  

\begin{corollary}
By setting $T= \mathcal{O}(\tfrac{1}{\sqrt{\epsilon}})$ and $N = \mathcal{O}(\log(\tfrac{1}{\epsilon}))$, \Cref{thm:pdboconvergence} ensures that $\bar{z}$ is an $\epsilon$-optimal point of the constrained problem in \cref{eq:reform3}, i.e. the optimality gap $f(\bar z) - f(z^*)$, constraint violation $[\tilde{h}(\bar z)]_+$, and squared distance $\|\bar{z} - z^*\|_2^2$ are all upper-bounded by $\epsilon$. Moreover, the total complexity of gradient accesses is given by $TN= \tilde{O}(\tfrac{1}{\sqrt{\epsilon}})$.  
\end{corollary}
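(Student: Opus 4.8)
The plan is to analyze \Cref{alg:saddle} as an accelerated primal--dual scheme for the saddle-point problem in \cref{eq:minimax1}. Under \Cref{ass:strongassp} the Lagrangian $\mathcal{L}(z,\lambda)=f(z)+\lambda\tilde{h}(z)$ is $\mu$-strongly convex in $z$ (since $\lambda\ge 0$ and $\tilde{h}$ is convex) and linear in $\lambda$, while the $\delta$-relaxation in \cref{eq:reform3} enforces strict feasibility; hence strong duality holds and there is a unique saddle point $(z^*,\lambda^*)$. The first step is to quantify the bias from the finite inner loop. Since $\tilde{g}(x,\cdot)$ is $\alpha$-strongly convex, the $N$ steps in \cref{eq:projectedgradientdescent} contract geometrically, giving $\|\hat{y}_N-\tilde{y}^*(x_t)\|_2^2=\mathcal{O}(e^{-N})$. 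Smoothness then produces two bias bounds: the constraint-value error $|\hat{h}(z_t)-\tilde{h}(z_t)|=\mathcal{O}(\|\hat{y}_N-\tilde{y}^*(x_t)\|_2^2)=\mathcal{O}(e^{-N})$, which is quadratic because $\tilde{y}^*(x_t)$ minimizes $\tilde{g}(x_t,\cdot)$, and the gradient error $\|\hat{\nabla}\tilde{h}(z_t)-\nabla\tilde{h}(z_t)\|_2\le\rho_g\|\hat{y}_N-\tilde{y}^*(x_t)\|_2=\mathcal{O}(e^{-N/2})$, which is only linear.

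Next I would write one-step inequalities for the two updates via the three-point (prox) lemma. Writing $v_t=(1+\theta_t)\hat{h}(z_t)-\theta_t\hat{h}(z_{t-1})$, the dual ascent \cref{eq:lambdaupdate1} yields, for every $\lambda\in\Lambda$,
\begin{equation*}
(\lambda-\lambda_{t+1})\,v_t\le\tfrac{\tau_t}{2}\big(|\lambda_t-\lambda|^2-|\lambda_{t+1}-\lambda|^2-|\lambda_{t+1}-\lambda_t|^2\big).
\end{equation*}
For the primal descent \cref{eq:zupdate1}, strong convexity of $\mathcal{L}(\cdot,\lambda_{t+1})$ combined with the descent lemma gives, for every $z\in\mathcal{Z}$,
\begin{equation*}
\mathcal{L}(z_{t+1},\lambda_{t+1})-\mathcal{L}(z,\lambda_{t+1})\le\tfrac{\eta_t}{2}\|z_t-z\|_2^2-\tfrac{\eta_t+\mu}{2}\|z_{t+1}-z\|_2^2-\tfrac{\eta_t-L_t}{2}\|z_{t+1}-z_t\|_2^2+b_t,
\end{equation*}
where $L_t$ is the smoothness modulus of $\mathcal{L}(\cdot,\lambda_{t+1})$ and $b_t=\lambda_{t+1}\langle\hat{\nabla}\tilde{h}(z_t)-\nabla\tilde{h}(z_t),z-z_{t+1}\rangle$ is the gradient-bias term. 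The key to obtaining the claimed $e^{-N}$ (rather than $e^{-N/2}$) is to split $b_t$ by Young's inequality: the linear gradient bias of order $e^{-N/2}$ multiplies a displacement that is absorbed into the available $\|z_{t+1}-z_t\|_2^2$ term (using $\eta_t\ge L_t$), leaving a residual of order $e^{-N}$.

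The third step combines the two inequalities at the comparison point $(z^*,\lambda^*)$, scales by $\gamma_t$, and sums over $t$. The extrapolation choice $\theta_t=\gamma_{t+1}/\gamma_t$ is precisely what telescopes the coupling terms $\gamma_t\hat{h}(z_t)$, while $\eta_t=\mathcal{O}(t)$ and $\tau_t=\mathcal{O}(1/t)$ are chosen so that $\gamma_t\tau_t$ is constant and $\gamma_t(\eta_t+\mu)\ge\gamma_{t+1}\eta_{t+1}$, making the quadratic distance terms telescope and the $\|z_{t+1}-z_t\|_2^2$ contributions nonpositive. After invoking convexity of $\mathcal{L}(\cdot,\lambda^*)$ and of $\|\cdot-z^*\|_2^2$ (Jensen) to pass to the weighted average $\bar{z}$, this gives
\begin{equation*}
\Gamma_T\big(\mathcal{L}(\bar{z},\lambda^*)-\mathcal{L}(z^*,\lambda^*)\big)+\tfrac{\mu}{2}\Gamma_T\|\bar{z}-z^*\|_2^2\le\mathcal{O}(1)+\mathcal{O}\big(\Gamma_T e^{-N}\big).
\end{equation*}
Since $\gamma_t=\mathcal{O}(t)$ makes $\Gamma_T=\mathcal{O}(T^2)$, dividing by $\Gamma_T$ yields the $\mathcal{O}(1/T^2)+\mathcal{O}(e^{-N})$ rate. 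The saddle inequality $\mathcal{L}(\bar{z},\lambda^*)-\mathcal{L}(z^*,\lambda^*)\ge 0$ together with a standard primal-dual-gap argument then splits the left side into the separate bounds on $f(\bar{z})-f(z^*)$ and $[\tilde{h}(\bar{z})]_+$, while the strong-convexity term bounds $\|\bar{z}-z^*\|_2^2$ directly.

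It remains to justify $\lambda^*\in\Lambda=[0,B]$, which I would establish through a uniform multiplier bound. Taking a strictly feasible $z_{\mathrm{f}}$ with $\tilde{h}(z_{\mathrm{f}})\le-\delta$, the saddle inequality gives $\lambda^*\delta\le-\lambda^*\tilde{h}(z_{\mathrm{f}})\le f(z_{\mathrm{f}})-f(z^*)\le\sup_{\mathcal{Z}}f-\inf_{\mathcal{Z}}f$, which is finite since $\mathcal{Z}$ is bounded; hence any $B\ge(\sup_{\mathcal{Z}}f-\inf_{\mathcal{Z}}f)/\delta$ suffices, making the comparison at $\lambda=\lambda^*$ legitimate. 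The step I expect to be the main obstacle is the coupled bias control: the dual bias enters through the telescoped $\hat{h}$ sum and the primal bias is weighted by $\lambda_{t+1}\le B$, so the multiplier bound feeds back into the gradient-bias estimate, and one must check that the increasing weights $\gamma_t=\mathcal{O}(t)$ inflate the per-iteration $\mathcal{O}(e^{-N})$ errors only to $\mathcal{O}(\Gamma_T e^{-N})$, so that the clean $\mathcal{O}(e^{-N})$ term survives division by $\Gamma_T$. The stated Corollary follows immediately by substituting $T=\mathcal{O}(1/\sqrt{\epsilon})$ and $N=\mathcal{O}(\log(1/\epsilon))$.
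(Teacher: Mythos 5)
Your proposal is correct and follows essentially the same route as the paper: the corollary itself is an immediate substitution into \Cref{thm:pdboconvergence}, and your outline reconstructs the paper's proof of that theorem (three-point lemma for both updates, extrapolated dual step chosen so $\theta_t=\gamma_{t+1}/\gamma_t$ telescopes the coupling terms, the gap function evaluated at $(z^*,0)$, $(z^*,\lambda^*+1)$ and $(z^*,\lambda^*)$, and the multiplier bound $\lambda^*\le D_f/\delta$ from the strictly feasible point with $\tilde h=-\delta$). The only small inaccuracy is your claim that the constraint-value bias is quadratic in $\|\hat y_N-\tilde y^*(x_t)\|_2$ (this needs the minimizer of $\tilde g(x_t,\cdot)$ to be interior to $\mathcal{Y}$; the paper just uses the linear Lipschitz bound), but since every bias term is geometric in $N$ and $N=\mathcal{O}(\log(1/\epsilon))$ absorbs constants in the exponent, this does not affect the stated conclusion or the $TN=\tilde{\mathcal{O}}(1/\sqrt{\epsilon})$ complexity.
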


We remark that our proof here is more challenging than the generic constrained optimization \cite{boob2019stochastic,ma2019proximally} due to the nature of the bilevel optimization. Specifically, the constraint function here includes the minimal value $y^*(x_t)\coloneqq\argmin_{y\in\mathcal{Y}} \tilde{g}(x_t, y)$ of the inner function, where both its value and the minimal point will be estimated during the execution of algorithm, which will cause the gradients of both primal and dual variables to have bias errors. Our analysis will need to deal with such bias errors and characterize their impact on the convergence.

We further remark that although PDBO has guaranteed convergence under convexity-type conditions, it can still converge fast under more general problems when $f$ and $g$ are nonconvex as we demonstrate in our experiments in \Cref{sec:experiment} and in appendix. However, formal theoretical treatment of nonconvex problems will require more sophisticated design as we present in the next section.

\section{Proximal-PDBO Algorithm} \label{sec:proximalpdbo}

\subsection{Algorithm Design}
In the previous section, we introduce PDBO and provide its convergence rate under convexity-type conditions. In order to solve the constrained optimization problem \cref{eq:reform3} in the general setting, we will adopt the proximal method \cite{boob2019stochastic,ma2019proximally}. 
	The general idea is to iteratively solve a series of sub-problems, constructed by regularizing the objective and constrained functions into strongly convex functions. In this way, the algorithm is expected to converge to a stochastic $\epsilon$-KKT point (see \Cref{def:KKT} in \Cref{sec:analysisofproxpdbo}) of the primal problem in \cref{eq:reform3}.
	
	By applying the proximal method, we obtain the Proximal-PDBO algorithm (see \Cref{alg:proximalmethod}) for solving the bilevel optimization problems formulated in \cref{eq:reform3}. At each iteration, the algorithm first constructs two proximal functions corresponding to the objective $f(z)$ and constraint $\tilde{h}(z)$ via regularizers. Since $f(z)$ is $\rho_f$-gradient Lipschitz as assumed in \Cref{ass:smoothness}, the constructed function $f_k(z)$ is strongly convex with $\mu=\rho_f$. For the new constraint $\tilde{h}_k(z)$, we next show that it is a convex function with large enough regularization coefficient $\rho$.
	\begin{lemma}\label{lemma:lipschizh}
		Suppose that \Cref{ass:smoothness} holds. Let $\rho = \tfrac{2\alpha\rho_g + \rho_g^2}{2\alpha}$, then $\tilde{h}_k(z)$ is a convex function.
	\end{lemma}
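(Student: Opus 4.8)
The plan is to isolate the only genuine source of nonconvexity in $\tilde{h}_k$ — the value function $-\tilde{g}^*$ — and to show that its ``upward curvature'' is controlled by the smoothing parameter $\alpha$, so that the proximal quadratic regularizer is exactly large enough to restore convexity. Recall the proximal surrogate has the form $\tilde{h}_k(z) = \tilde{h}(z) + \rho\|z - z_k\|_2^2 = g(z) - \tilde{g}^*(x) + \rho\|z-z_k\|_2^2$. Since translating the regularizer center $z_k$ does not affect convexity, and since the added quadratic has curvature $2\rho$, it suffices to prove that $\tilde{h}(z) = g(z) - \tilde{g}^*(x)$ is weakly convex with modulus $2\rho$. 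I would obtain this by bounding separately the weak-convexity moduli of the two summands $g$ and $-\tilde{g}^*$ and adding them.

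The summand $g$ is immediate: \Cref{ass:smoothness} already states that $g$ is convex in $y$ and $\rho_g$-weakly convex in $x$, which (via the associated monotonicity inequality $\langle \nabla g(z') - \nabla g(z), z'-z\rangle \ge -\rho_g\|x'-x\|_2^2$) contributes a modulus of $\rho_g$ with respect to $\|x'-x\|_2^2 \le \|z'-z\|_2^2$. The real work is the value-function summand. First I would show that the inner minimizer $\tilde{y}^*(x) = \argmin_{y\in\mathcal{Y}}\tilde{g}(x,y)$ is Lipschitz with constant $\rho_g/\alpha$: writing the first-order (variational) optimality inequalities for $\tilde{y}^*(x)$ and $\tilde{y}^*(x')$, summing them, and invoking the $\alpha$-strong convexity of $\tilde{g}(x,\cdot)$ together with the monotonicity of $\nabla_y g(x,\cdot)$ (convexity of $g$ in $y$) and the $\rho_g$-Lipschitzness of $\nabla_y g$ in $x$ yields $\|\tilde{y}^*(x') - \tilde{y}^*(x)\|_2 \le \tfrac{\rho_g}{\alpha}\|x'-x\|_2$. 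Because $\nabla \tilde{g}^*(x) = \nabla_x g(x,\tilde{y}^*(x))$ — the envelope formula already recorded in \Cref{sec:pd} — chaining this Lipschitz bound with the joint $\rho_g$-gradient-Lipschitzness of $g$ gives that $\nabla\tilde{g}^*$ is Lipschitz with constant $L^* = \rho_g\big(1 + \tfrac{\rho_g}{\alpha}\big) = \rho_g + \tfrac{\rho_g^2}{\alpha}$.

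A function with $L^*$-Lipschitz gradient has its negative weakly convex with modulus $L^*$, so $-\tilde{g}^*$ contributes modulus $L^*$. Adding the two contributions, $\tilde{h}$ is weakly convex with modulus $\rho_g + L^* = 2\rho_g + \tfrac{\rho_g^2}{\alpha} = 2\rho$, where $\rho = \tfrac{2\alpha\rho_g + \rho_g^2}{2\alpha}$. Since the regularizer $\rho\|z-z_k\|_2^2$ supplies curvature exactly $2\rho$, the sum $\tilde{h}_k$ has nonnegative curvature everywhere and is therefore convex.

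I expect the main obstacle to be the Lipschitz estimate for the inner minimizer $\tilde{y}^*$, as this is where both the smoothing parameter $\alpha$ and the convexity of $g$ in $y$ must be exploited, and it is precisely the origin of the $\rho_g^2/(2\alpha)$ term in $\rho$; a secondary point requiring care is the bookkeeping between the weak-convexity modulus ($2\rho$) and the regularizer's curvature ($2\rho$), so that the stated constant emerges exactly rather than off by a factor of two. I note that the entire argument uses only gradient-Lipschitz and convexity hypotheses, never second derivatives of $g$, so it remains valid under the $C^1$ assumptions of \Cref{ass:smoothness}.
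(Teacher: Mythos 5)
Your proposal is correct, and its overall architecture is the same as the paper's: decompose $\tilde h = g - \tilde g^*$, charge a weak-convexity modulus $\rho_g$ to $g$ (in the $x$-direction only, via the last inequality of \Cref{ass:smoothness}), charge $\rho_g + \rho_g^2/\alpha$ to $-\tilde g^*$ via the Lipschitz constant of $\nabla \tilde g^*$, and observe that the sum $2\rho_g + \rho_g^2/\alpha = 2\rho$ is exactly the curvature supplied by the proximal quadratic. Where you genuinely diverge is in how the smoothness of $\tilde g^*$ is obtained. The paper differentiates the optimality condition $\nabla_y g(x,\tilde y^*(x)) + \alpha \tilde y^*(x)=0$ implicitly, writes $\partial \tilde y^*/\partial x = -[\nabla_y^2 g + \alpha I]^{-1}\nabla_x\nabla_y g$, and bounds $\|\nabla^2\tilde g^*\|_2 \le \rho_g + \rho_g^2/\alpha$ — an argument that tacitly uses second derivatives of $g$, which \Cref{ass:smoothness} does not grant, and an unconstrained stationarity equation even though the inner problem is constrained to $\mathcal{Y}$. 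Your route — summing the two variational inequalities at $\tilde y^*(x)$ and $\tilde y^*(x')$, using $\alpha$-strong convexity and monotonicity of $\nabla_y g(x,\cdot)$ to get $\|\tilde y^*(x')-\tilde y^*(x)\|\le (\rho_g/\alpha)\|x'-x\|$, then chaining with the envelope formula — yields the same constant while staying strictly within the $C^1$ hypotheses and handling the constraint set correctly, so it is the more robust derivation. Two small bookkeeping points: the regularizer in $(\mathrm{P}_k)$ is $\rho\|x-\tilde x_{k-1}\|_2^2$, penalizing only the $x$-block, not $\rho\|z-z_k\|_2^2$ as you wrote; your argument still closes because every weak-convexity deficit you identify is measured against $\|x'-x\|_2^2$ (the $y$-dependence of $g$ is convex and $\tilde g^*$ depends only on $x$), but you should state the regularizer as the paper defines it. Also note that the envelope formula $\nabla\tilde g^*(x)=\nabla_x g(x,\tilde y^*(x))$, which you cite from \Cref{sec:pd}, itself deserves the variational-inequality justification on $\mathcal{Y}$ rather than the equality $\nabla_y g + \alpha \tilde y^* = 0$ used in the paper's appendix.
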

	The above lemma and the $\rho_f$-strong convexity of $f_k(z)$ ensure that \Cref{ass:strongassp} holds with $f_k(z)$ and $h_k(z)$. Then lines 5-10 adopt the PDBO as a subroutine to solve the subproblem $(\mathrm{P}_k)$, which is shown to be effective in \Cref{thm:pdboconvergence}. Similarly to what we have done in \Cref{sec:pd}, we estimate $\tilde{h}_k(z_t)$ and $\nabla \tilde{h}_k(z_t)$ using $\hat{y}_N$ as $\hat{h}_k(z_t)= g(x_t, y_t) - \tilde{g}(x_t, \hat{y}_N) -\delta + \rho_h\|x_t - \tilde{x}_k\|_2^2$, and $\hat{\nabla}  \tilde{h}_k(z_t) = \nabla g(z_t) - (\nabla_x \tilde{g}(x_t, \hat{y}_N); \mathbf{0}_d) + 2\rho(x_t - \tilde{x}_k; \mathbf{0}_d)$.
	The gradient of the Lagrangian is immediately obtained through $\hat{\nabla}_{\lambda} \mathcal{L}_k (z_t, \lambda_{t})= \hat{h}_k(z_t)$ and $\hat{\nabla}_z \mathcal{L}_k (z_t, \lambda_{t+1})= \nabla f_k(z_t) + \lambda_{t+1}\hat{\nabla}\tilde{h}_k(z_t)$. Finally, the main step of updating dual and primal variables in \cref{eq:zupdate1,eq:lambdaupdate1} are adjusted here as follows: 
	\begin{align}
		\lambda_{t+1} &= \Pi_{\Lambda}\big(\lambda_t + \tfrac{1}{\tau_t}\big((1+\theta_t)\hat{h}_k(z_t) - \theta_t\hat{h}_k (z_{t-1}))\big)\big),\label{eq:accelerateupdatelambda}\\
		z_{t+1} &= \Pi_{\mathcal{Z}}\big(z_t -\tfrac{1}{\eta_t}\hat{\nabla}_z \mathcal{L}_k (z_t, \lambda_{t+1})\big).\label{eq:zupdatemain}
	\end{align} 
	
	\begin{algorithm}[tb]
		\caption{Proximal-PDBO Algorithm}
		\small
		\label{alg:proximalmethod}
		\begin{algorithmic}[1]
			\STATE {\bfseries Input:} Stepsizes $\eta_t$ and $\tau_t$, $\theta_t$, output weights $\gamma_t$  and iteration numbers $K$, and $T$
			\vspace{0.05cm}
			\STATE Set $\tilde{z}_0$ be any point inside $\mathcal{Z}$
			\FOR{$k=1,...,K$}
			\STATE Set the sub-problem
			\begin{equation}
					\min_{z\in\mathcal{Z}} \ f_k(z) \coloneqq f(z) + \rho_f \|z - \tilde{z}_{k-1}\|_2^2, \ \mbox{s.t.}\  \tilde{h}_k(z) \coloneqq \tilde{h}(z) + \rho \|x- \tilde{x}_{k-1}\|_2^2 \le 0. \tag{$\mbox{P}_k$}
			\end{equation} 
			\STATE Internalize $z_0 = z_{-1} = \tilde{z}_{k-1}$ and $\lambda_0=\lambda_{-1}=0$
			\FOR{$t=0,1,...,T-1$}
			\vspace{0.05cm}
			\STATE Conduct updates of $\hat{y}_t$ in \cref{eq:projectedgradientdescent} for $N$ times with any initial point $\hat{y}_0\in\mathcal{Y}$ to estimate $\tilde{y}^*(x_t)$ 
			\STATE{Update $\lambda_{t+1}$ according \cref{eq:accelerateupdatelambda}} 
			\vspace{0.05cm}
			\STATE{Update $z_{t+1}$ according to \cref{eq:zupdatemain}}  
			\vspace{0.05cm}
			\ENDFOR
			\STATE Set $\tilde{z}_k = \tfrac{1}{\Gamma_T}\sum_{t=0}^{T-1} \gamma_t z_{t+1}$, with $\Gamma_T =\sum_{t=0}^{T-1}\gamma_t$.
			\ENDFOR
			\STATE Randomly pick $\hat{k}$ from $\{1, \ldots, K\}$
			\STATE {\bfseries Output:}  $\tilde{z}_{\hat{k}}$
		\end{algorithmic}
	\end{algorithm}
	
	\subsection{Convergence Rate of Proximal-PDBO}\label{sec:analysisofproxpdbo}
	
	We first introduce the following first-order necessary condition of optimality for the nonconvex optimization problem with nonconvex constraint in \cref{eq:reform3}  \cite{boob2019stochastic,ma2019proximally}. 
	\begin{definition}[(Stochastic) $\epsilon$-KKT point]\label{def:KKT}
		Consider the constrained optimization problem in \cref{eq:reform3}. A point $\hat{z}\in\mathcal{Z}$ is an $\epsilon$-KKT point iff., there exist $z\in\mathcal{Z}$ and $\lambda\ge 0$ such that $\tilde h(z)\le 0$, $\|z-\hat{z}\|_2^2\le \epsilon$, $|\lambda \tilde h(z)|\le \epsilon$, and $\mathrm{dist}\left(\nabla f(z) + \nabla \tilde h(z), -\mathcal{N}(z; \mathcal{Z})\right)\le \epsilon$, where $\mathcal{N}(z; \mathcal{Z})$ is the normal cone to $\mathcal{Z}$ at $z$, and the distance between a vector $v$ and a set $\mathcal{V}$ is $\mathrm{dist}(v, \mathcal{V})\coloneqq\inf\{\|v - v^\prime\|_2: v^\prime\in\mathcal{V}\}$. For random $\hat{z}\in\mathcal{Z}$, it is a stochastic $\epsilon$-KKT point if there exist $z\in\mathcal{Z}$ and $\lambda\ge 0$ such that the same requirements of $\epsilon$-KKT hold in expectation.
	\end{definition}
We will take the $\epsilon$-KKT condition as our convergence metric. It has been shown that the above KKT condition serves as the first-order necessary condition for the optimality guarantee for nonconvex  optimization with nonconvex constraints under the MFCQ condition (see more details in \cite{mangasarian1967fritz}).

Next, we establish the convergence guarantee for Proximal-PDBO, which does not follow directly from that for standard constrained nonconvex optimization \cite{boob2019stochastic,ma2019proximally} due to the special challenges arising in bilevel problem formulations. Our main development lies in showing that the optimal dual variables for all subproblems visited during the algorithm iterations are uniformly bounded. Then the convergence of the Proximal-PDBO follows from the convergence of each subproblem (which we establish in \Cref{thm:pdboconvergence}) and the uniform bound of optimal dual variables. The details of the proof could be found in the appendix.  

\begin{theorem}\label{thm:maintheorem}
	Suppose \Cref{ass:smoothness} holds. Consider \Cref{alg:proximalmethod}. Let the hyperparameters $B>0$ be a large enough constant, $\gamma_t=\mathcal{O}(t)$, $\eta_t=\mathcal{O}(t)$, $\tau_t=\mathcal{O}(\frac{1}{t})$ and $\theta_t=\gamma_{t+1}/\gamma_t$. Then, the output $\tilde{z}_{\hat{k}}$ of \Cref{alg:proximalmethod} with a randomly chosen index $\hat k$ is a stochastic $\epsilon$-KKT point of \cref{eq:reformulation}, where $\epsilon$ is given by
	$\epsilon=\mathcal{O}\left(\tfrac{1}{K}\right)+\mathcal{O}\left(\tfrac{1}{T^2}\right)+ \mathcal{O}\left(e^{-N}\right)$.
\end{theorem}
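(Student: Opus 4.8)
The plan is to exploit the modular structure of Proximal-PDBO: the outer loop is a proximal-point method for the nonconvex constrained problem in \cref{eq:reform3}, while each inner subproblem $(\mathrm{P}_k)$ is solved by PDBO, whose behavior is already quantified in \Cref{thm:pdboconvergence}. First I would verify that every subproblem meets the hypotheses of \Cref{thm:pdboconvergence}: the regularized objective $f_k$ is $\rho_f$-strongly convex because $f$ is $\rho_f$-gradient Lipschitz, and the regularized constraint $\tilde h_k$ is convex by \Cref{lemma:lipschizh}. Hence \Cref{ass:strongassp} holds for the pair $(f_k, \tilde h_k)$, and \Cref{thm:pdboconvergence} guarantees that the PDBO output $\tilde z_k$ satisfies the three-term bound of order $\mathcal{O}(1/T^2)+\mathcal{O}(e^{-N})$ on the subproblem optimality gap, the constraint violation, and the squared distance to the subproblem minimizer $z_k^*$.

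The second step is the crux flagged in the text: I must establish a uniform upper bound on the optimal dual multipliers $\lambda_k^*$ of all subproblems, so that a large enough projection radius $B$ never truncates the true multiplier. To this end I would use the strict feasibility built into the formulation through the $\delta$-relaxation. For each $k$ the point $(\tilde x_{k-1}, \tilde y^*(\tilde x_{k-1}))$ is strictly feasible for $(\mathrm{P}_k)$: the proximal term $\rho\|x-\tilde x_{k-1}\|_2^2$ vanishes there and $\tilde h_k \le -\delta < 0$, giving a Slater margin of at least $\delta$ that is uniform in $k$. Since $f_k$ is bounded on the compact domain $\mathcal{Z}$ (diameter $D_{\mathcal{Z}}$) by $\max_{\mathcal{Z}} f + \rho_f D_{\mathcal{Z}}^2$, the standard Lagrangian-duality bound controls $\lambda_k^*$ by the ratio of this objective range to $\delta$, yielding a constant independent of $k$. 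This replaces the boundedness hypothesis imposed in \cite{boob2019stochastic} by a proven bound, and it is here that the bilevel structure is most delicate, because $\tilde h_k$ is defined through the inner value function $\tilde g^*$ whose estimate $\hat y_N$ feeds the biased gradients of both the primal and the dual updates.

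With the per-subproblem accuracy and the uniform dual bound in hand, the third step is the outer proximal-point telescoping. I would write a one-step descent inequality for the proximal objective showing that its decrease across iteration $k$ is controlled by $\|\tilde z_k-\tilde z_{k-1}\|_2^2$, up to the inexactness inherited from \Cref{thm:pdboconvergence}. Summing over $k=1,\dots,K$ and using boundedness of $f$ on $\mathcal{Z}$ produces an averaged stationarity bound of order $\mathcal{O}(1/K)$ plus accumulated inner errors $\mathcal{O}(1/T^2)+\mathcal{O}(e^{-N})$; since $\hat k$ is drawn uniformly, the same bound holds in expectation for the chosen index, giving the \emph{stochastic} guarantee. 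Finally I would translate this into the stochastic $\epsilon$-KKT condition of \Cref{def:KKT} for the original problem \cref{eq:reformulation}: near-feasibility and the small complementary-slackness and gradient residuals follow from the subproblem guarantees and the dual bound, while the proximal-gradient contributions $2\rho_f(\tilde z_k-\tilde z_{k-1})$ and $2\rho(\tilde x_k-\tilde x_{k-1})$ supply exactly the $\mathcal{O}(1/K)$ portion of the KKT residual and vanish as the iterates stabilize.

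The main obstacle I expect is the uniform dual bound in the presence of the bilevel-specific biased gradients: tracking how the estimation error of $\tilde y^*(x_t)$ propagates simultaneously into the dual ascent \cref{eq:accelerateupdatelambda} and the primal descent \cref{eq:zupdatemain}, while certifying that the Slater margin $\delta$ survives the proximal perturbation for every $k$, is precisely the technical step that distinguishes this analysis from the generic constrained-optimization treatments in \cite{boob2019stochastic,ma2019proximally}.
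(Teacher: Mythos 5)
Your proposal matches the paper's proof essentially step for step: verify that each subproblem $(\mathrm{P}_k)$ satisfies \Cref{ass:strongassp} and invoke \Cref{thm:pdboconvergence}, prove the uniform dual bound $\lambda_k^*\le (D_f+\rho_f D_{\mathcal{Z}}^2)/\delta$ via the same Slater point $(\tilde x_{k-1},\tilde y^*(\tilde x_{k-1}))$ with margin $\delta$, and then combine the per-subproblem $\Delta$-accuracy with the outer proximal-point argument (which the paper imports as Theorem 3.17 of \cite{boob2019stochastic} rather than re-deriving the telescoping as you sketch) to get $\epsilon=\mathcal{O}(1/K)+\mathcal{O}(1/T^2)+\mathcal{O}(e^{-N})$. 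The approach is correct and essentially identical; the only cosmetic omissions are the adjusted Lipschitz constants ($3\rho_f$ for $f_k$, $2\rho_h$ for $\tilde h_k$) and the need to take $B\ge\bar B+1$ so that $\lambda_k^*+1\in\Lambda$ in the constraint-violation step.
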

\Cref{thm:maintheorem} characterizes the convergence of Proximal-PDBO. In particular, there are three sources of the convergence error: (a) the inaccurate initialization of proximal center $\tilde{z}_0$ captured by $\mathcal{O}(\tfrac{1}{K})$, (b) the distance between $z_0$ and the optimal point of each subproblem $(\mathrm{P}_k)$ upper-bounded by $\mathcal{O}(\frac{1}{T^2})$, and (c) the inaccurate estimation of $\tilde y^*(x_t)$ in the updates captured by $\mathcal{O}(e^{-N})$. 

\begin{corollary}
\Cref{thm:maintheorem} indicates that for any prescribed accuracy level $\epsilon>0$, by setting $K = \mathcal{O}(\tfrac{1}{\epsilon})$, $T=\mathcal{O}(\tfrac{1}{\sqrt{\epsilon}})$ and $N = \mathcal{O}(\log(\tfrac{1}{\epsilon}))$, we obtain an $\epsilon$-KKT point in expectation. The total computation of gradients is given by $KTN=\tilde{\mathcal{O}}(\tfrac{1}{\epsilon^{3/2}})$. 
\end{corollary}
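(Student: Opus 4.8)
The plan is to derive the corollary directly from the error expression
$\epsilon=\mathcal{O}(\tfrac{1}{K})+\mathcal{O}(\tfrac{1}{T^2})+\mathcal{O}(e^{-N})$
established in \Cref{thm:maintheorem}, by a simple term-balancing argument followed by a product count of the total gradient accesses. The corollary has two assertions to verify: first, that the stated choices $K=\mathcal{O}(\tfrac{1}{\epsilon})$, $T=\mathcal{O}(\tfrac{1}{\sqrt{\epsilon}})$, and $N=\mathcal{O}(\log\tfrac{1}{\epsilon})$ force each of the three error terms to be $\mathcal{O}(\epsilon)$, so that by \Cref{thm:maintheorem} the output $\tilde{z}_{\hat{k}}$ is a stochastic $\epsilon$-KKT point; and second, that the resulting total gradient complexity is $KTN=\tilde{\mathcal{O}}(\tfrac{1}{\epsilon^{3/2}})$.

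First I would address term-balancing. The strategy is to require each of the three summands in the bound of \Cref{thm:maintheorem} to be individually at most a constant multiple of $\epsilon$; their sum is then $\mathcal{O}(\epsilon)$, which is exactly the accuracy level appearing in the definition of a (stochastic) $\epsilon$-KKT point (\Cref{def:KKT}). For the proximal-center term $\mathcal{O}(\tfrac{1}{K})$, setting $K=\mathcal{O}(\tfrac{1}{\epsilon})$ gives $\mathcal{O}(\tfrac{1}{K})=\mathcal{O}(\epsilon)$. For the subproblem-optimization term $\mathcal{O}(\tfrac{1}{T^2})$, setting $T=\mathcal{O}(\tfrac{1}{\sqrt{\epsilon}})$ gives $\mathcal{O}(\tfrac{1}{T^2})=\mathcal{O}(\epsilon)$. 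For the inner-estimation term $\mathcal{O}(e^{-N})$, choosing $N=\mathcal{O}(\log\tfrac{1}{\epsilon})$ gives $e^{-N}=\mathcal{O}(\epsilon)$. Substituting these three choices into the bound of \Cref{thm:maintheorem} yields $\epsilon=\mathcal{O}(\epsilon)$, and therefore, invoking \Cref{thm:maintheorem}, $\tilde{z}_{\hat{k}}$ is a stochastic $\epsilon$-KKT point of \cref{eq:reformulation} in expectation. I would note that these three choices are decoupled: each term depends monotonically on exactly one of the parameters $K$, $T$, $N$, so balancing them independently is legitimate and no joint optimization is needed.

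Next I would compute the total gradient complexity. The algorithm structure in \Cref{alg:proximalmethod} is a triple nested loop: an outer loop over $k=1,\dots,K$ (the proximal iterations), an inner loop over $t=0,\dots,T-1$ (the PDBO subroutine), and within each inner iteration $N$ steps of projected gradient descent in \cref{eq:projectedgradientdescent} to estimate $\tilde{y}^*(x_t)$, together with $\mathcal{O}(1)$ gradient evaluations of $f$ and $g$ for the primal and dual updates in \cref{eq:accelerateupdatelambda,eq:zupdatemain}. Hence each inner iteration costs $\mathcal{O}(N)$ gradient accesses, each proximal iteration costs $\mathcal{O}(TN)$, and the whole algorithm costs $\mathcal{O}(KTN)$. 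Plugging in the chosen orders gives
$KTN=\mathcal{O}\!\big(\tfrac{1}{\epsilon}\cdot\tfrac{1}{\sqrt{\epsilon}}\cdot\log\tfrac{1}{\epsilon}\big)=\mathcal{O}\!\big(\tfrac{1}{\epsilon^{3/2}}\log\tfrac{1}{\epsilon}\big)=\tilde{\mathcal{O}}(\tfrac{1}{\epsilon^{3/2}})$,
where the $\tilde{\mathcal{O}}$ notation absorbs the logarithmic factor coming from $N$. This establishes the claimed complexity.

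The argument is essentially routine bookkeeping once \Cref{thm:maintheorem} is in hand; there is no genuine obstacle. The only point requiring minor care is making sure the per-iteration gradient count is correctly identified as $\mathcal{O}(N)$ rather than $\mathcal{O}(1)$ — that is, remembering that the dominant cost inside each inner step is the $N$-step inner solver of \cref{eq:projectedgradientdescent}, not merely the single outer gradient evaluation — since this is what produces the logarithmic factor and hence the $\tilde{\mathcal{O}}$ (rather than $\mathcal{O}$) in the final count.
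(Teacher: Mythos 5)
Your proposal is correct and matches the paper's intended argument exactly: the corollary is treated as an immediate consequence of \Cref{thm:maintheorem}, obtained by balancing each of the three error terms $\mathcal{O}(\tfrac{1}{K})$, $\mathcal{O}(\tfrac{1}{T^2})$, $\mathcal{O}(e^{-N})$ against $\epsilon$ and counting the $K\times T\times N$ nested-loop gradient accesses of \Cref{alg:proximalmethod}. Your remark that the $N$-step inner solver of \cref{eq:projectedgradientdescent} is what produces the logarithmic factor (and hence the $\tilde{\mathcal{O}}$) is exactly the right bookkeeping point.
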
 

We further note that \Cref{thm:pdboconvergence,thm:maintheorem} are the first known finite-time convergence rate characterization for bilevel optimization problems with multiple inner minimal points.  

\section{Experiments}\label{sec:experiment}
In this section, we first consider the numerical verification for our algorithm over two synthetic problems, where one of them is moved to the appendix due to the page limit, and then apply it to hyperparameter optimization.

\subsection{Numerical Verification} 
Consider the following bilevel optimization problem: 
\begin{equation} \label{eq:toy} 
	\min\nolimits_{x \in \mathbb{R}} \mbox{ }  f(x,y) \coloneqq \tfrac{1}{2} \| (1, x)^\top - y \|^2 \quad \mbox{s.t.} \quad y \in \argmin\nolimits_{y \in \mathbb{R}^2} \mbox{ } g(x,y) \coloneqq \tfrac{1}{2}y_1^2 - x y_1, 
\end{equation}
where $y$ is a vector in $\mathbb{R}^2$ and $x$ is a scalar.  
It is not hard to analytically derive that the optimal solution of the problem in \cref{eq:toy} is $(x^*, y^*) = (1, (1,1))$, which corresponds to the optimal objective values of $f^* = 0$ and $g^* = -\frac{1}{2}$. 
For a given value of $x$, the lower-level problem admits a unique minimal value $g^*(x) =  -\frac{1}{2} x $, which is attained at all points $y = (x, a)^\top$ with $a \in \mathbb{R}$. Hence, the problem in \cref{eq:toy} violates the requirement of the existence of a \textbf{single} minimizer for the inner-problem, which is a strict requirement for most existing bilevel optimization methods, but still fall into our theoretical framework that allows multiple inner minimizers. In fact, it can be analytically shown that standard AID and ITD approaches cannot solve the problem in \cref{eq:toy} \citep{liu2020generic}, which makes it both interesting and challenging. 

We compare our algorithms \textbf{(Proximal-)PDBO} with the following representative methods for bilevel optimization: 
\begin{list}{$\bullet$}{\topsep=0.1ex \leftmargin=0.15in \rightmargin=0.1in \itemsep =0.01in}
	\item \textbf{BigSAM + ITD} \citep{liu2020generic, li2020improved}: uses sequential averaging to solve the inner problem and applies reverse mode automatic differentiation to compute hypergradient. This method is also designed to solve bilevel problems with multiple inner minima. 
	\item \textbf{AID-FP} \cite{grazzi2020iteration}: an approximate implicit differentiation approach with Hessian inversion using fixed point method. This method is guaranteed to converge when the lower-level problem admits a unique minimizer. 
	\item \textbf{ITD-R}\citep{franceschi2017forward}: the standard iterative differentiation method for bilevel optimization, which differentiates through the unrolled inner gradient descent steps. We use its reverse mode implementation. This method is also guaranteed to converge when the lower level problem admits a unique minimizer. 
\end{list}  

\begin{figure}
\centering
\end{figure}
	\begin{figure*}[ht]
		\centering
		\begin{tabular}{ccc}
			\small{(a) outer objective v.s. iterations}  & \small{(b) inner opt. gap v.s. iterations}  & \small{(c) gradient norm v.s. iterations} \\
			\includegraphics[width=4.2cm,height=2.8cm]{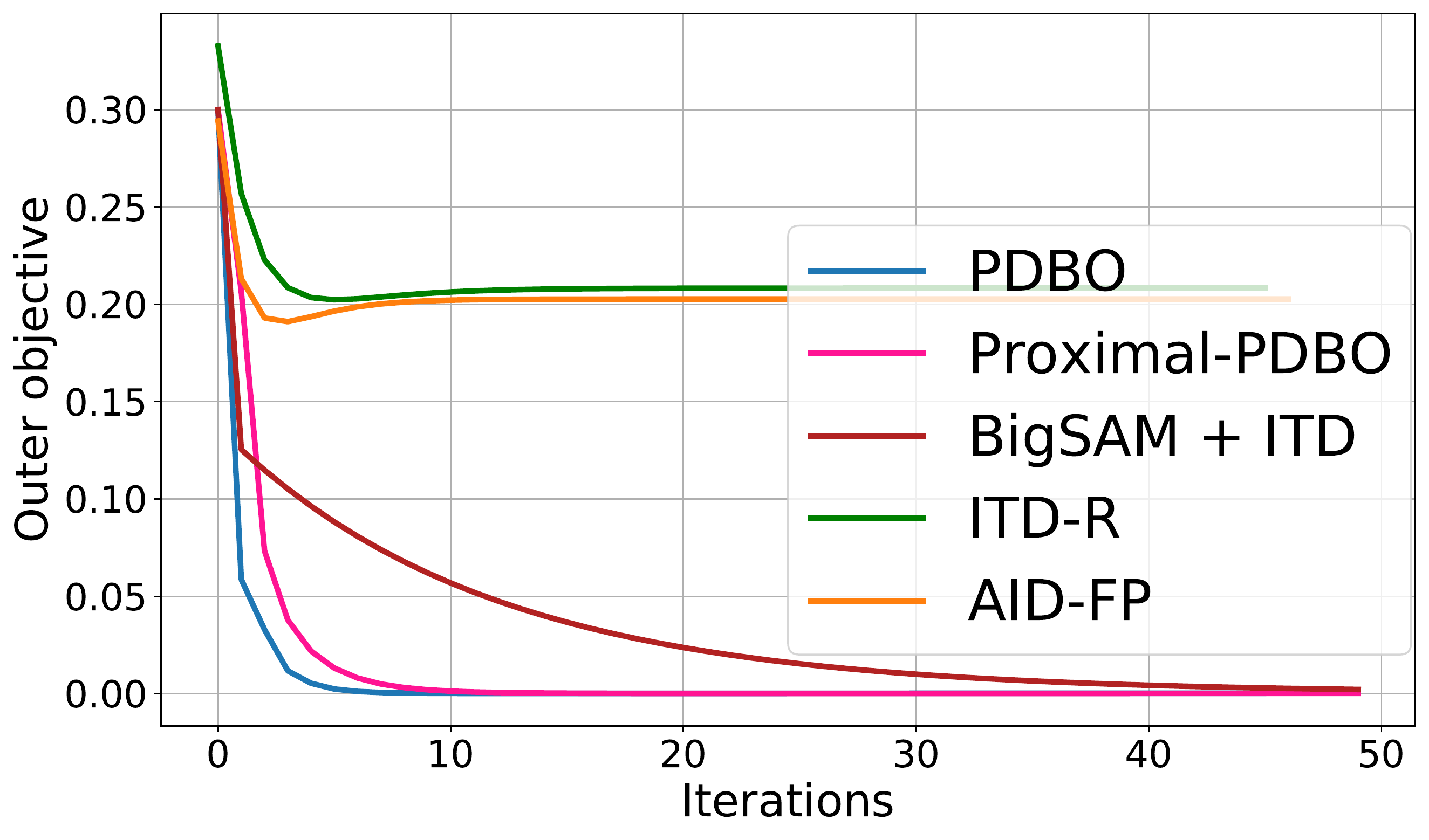}
			&\includegraphics[width=4.2cm,height=2.8cm]{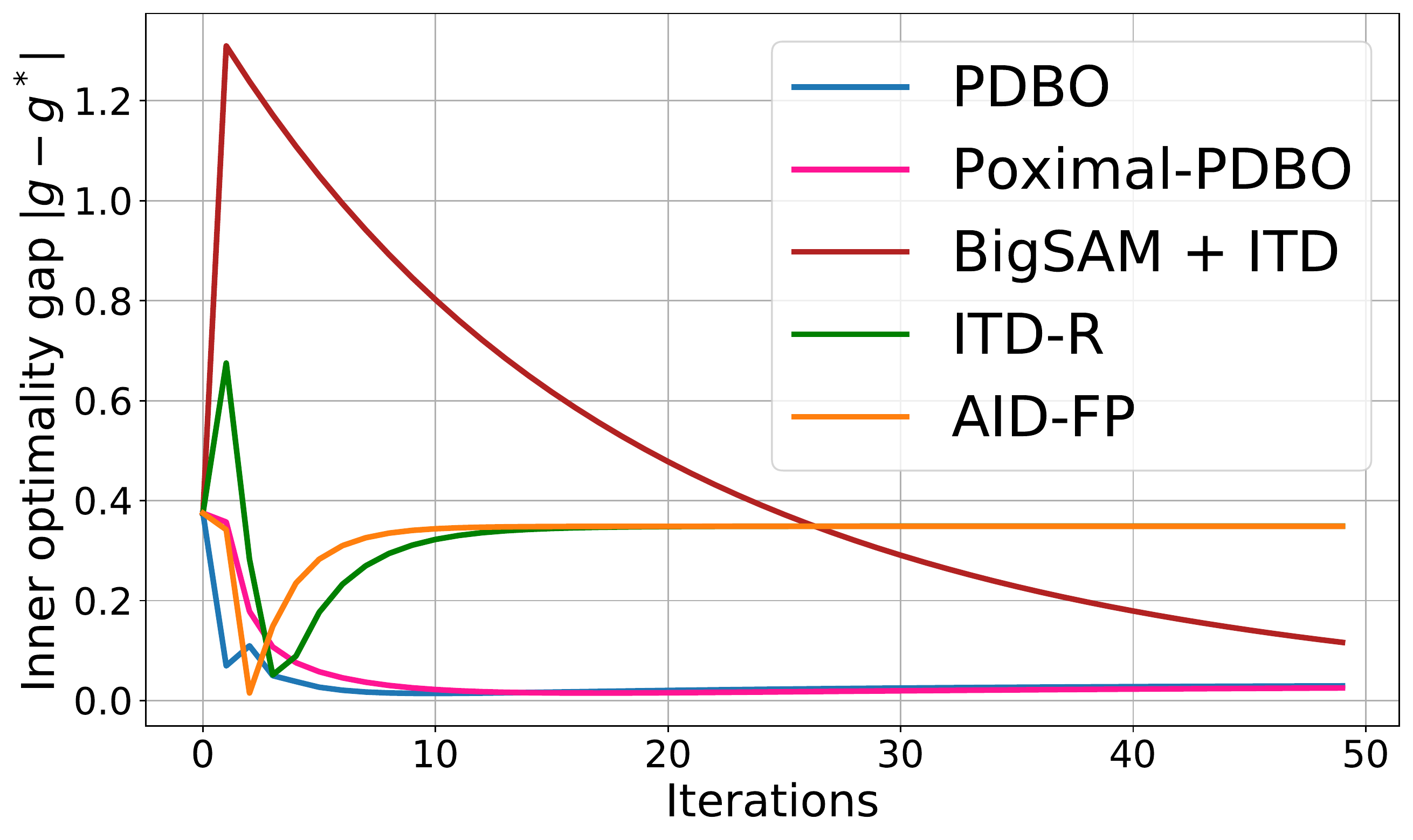}
			&\includegraphics[width=4.2cm,height=2.8cm]{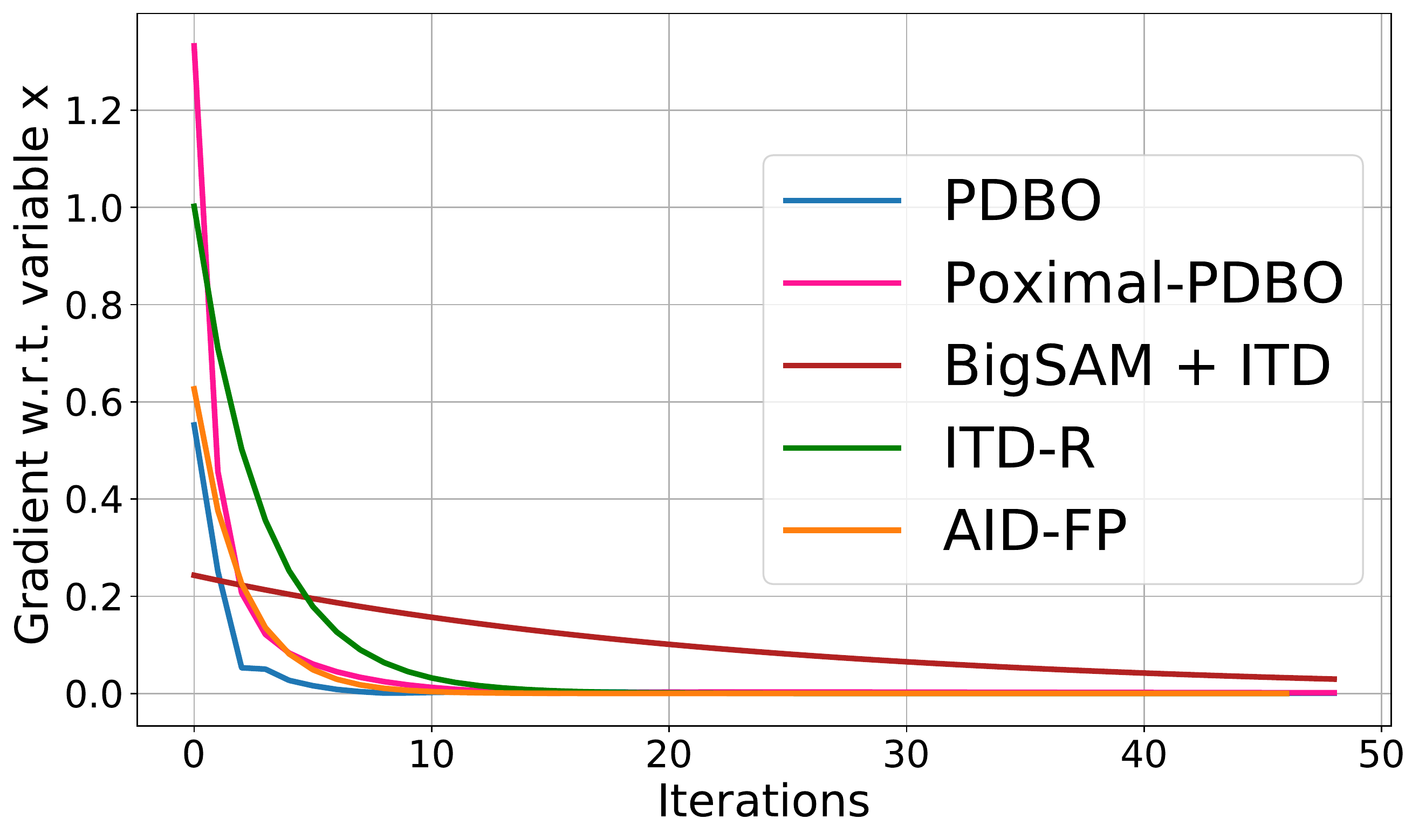}\\
			\small{(d) $\|y - y^*\|$ v.s. iterations} & \small{(e) $\|x - x^*\|$ v.s. iterations} & \small{(f) outer objective v.s. iterations}  \\
			\includegraphics[width=4.2cm,height=2.8cm]{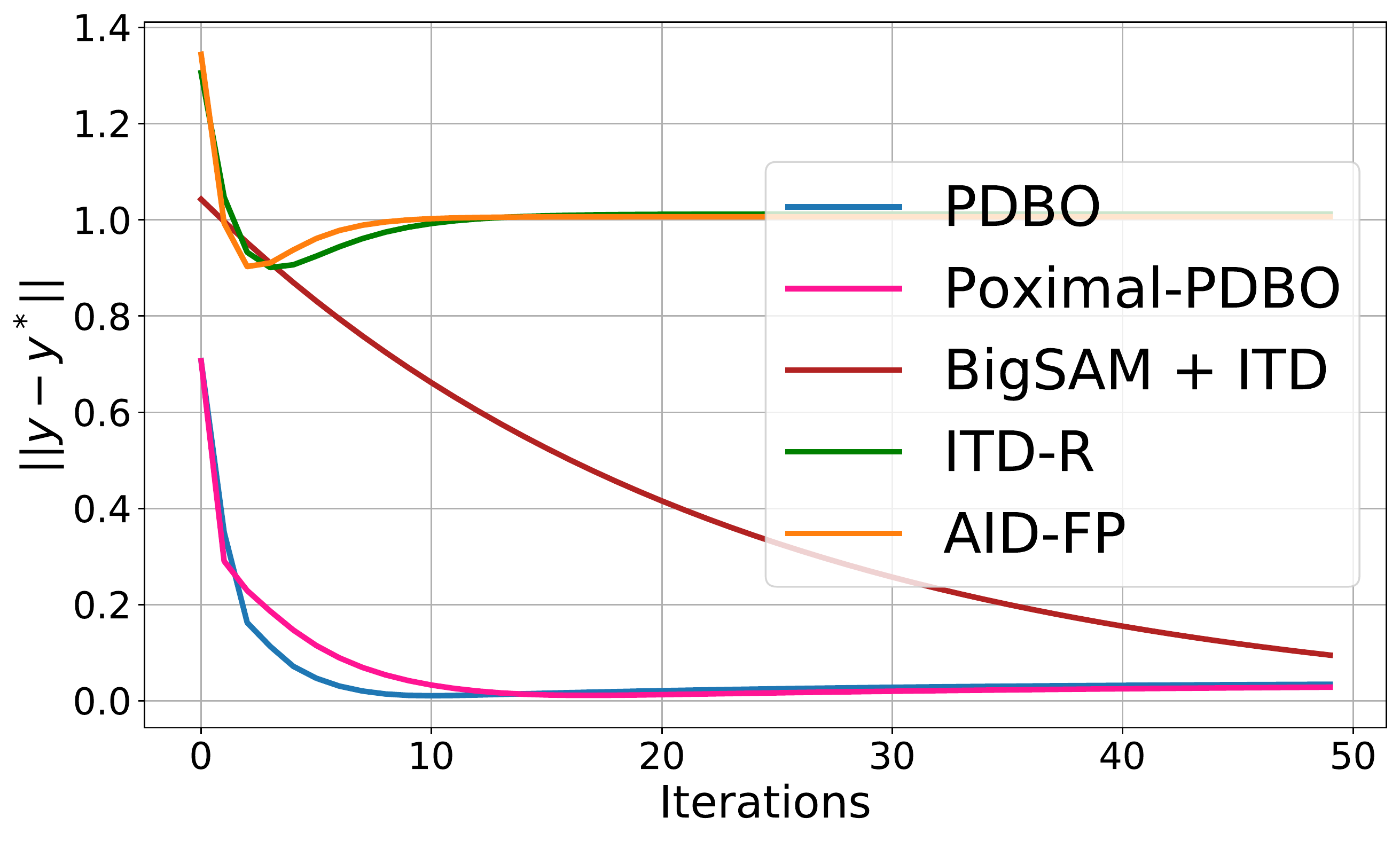}
			&\includegraphics[width=4.2cm,height=2.8cm]{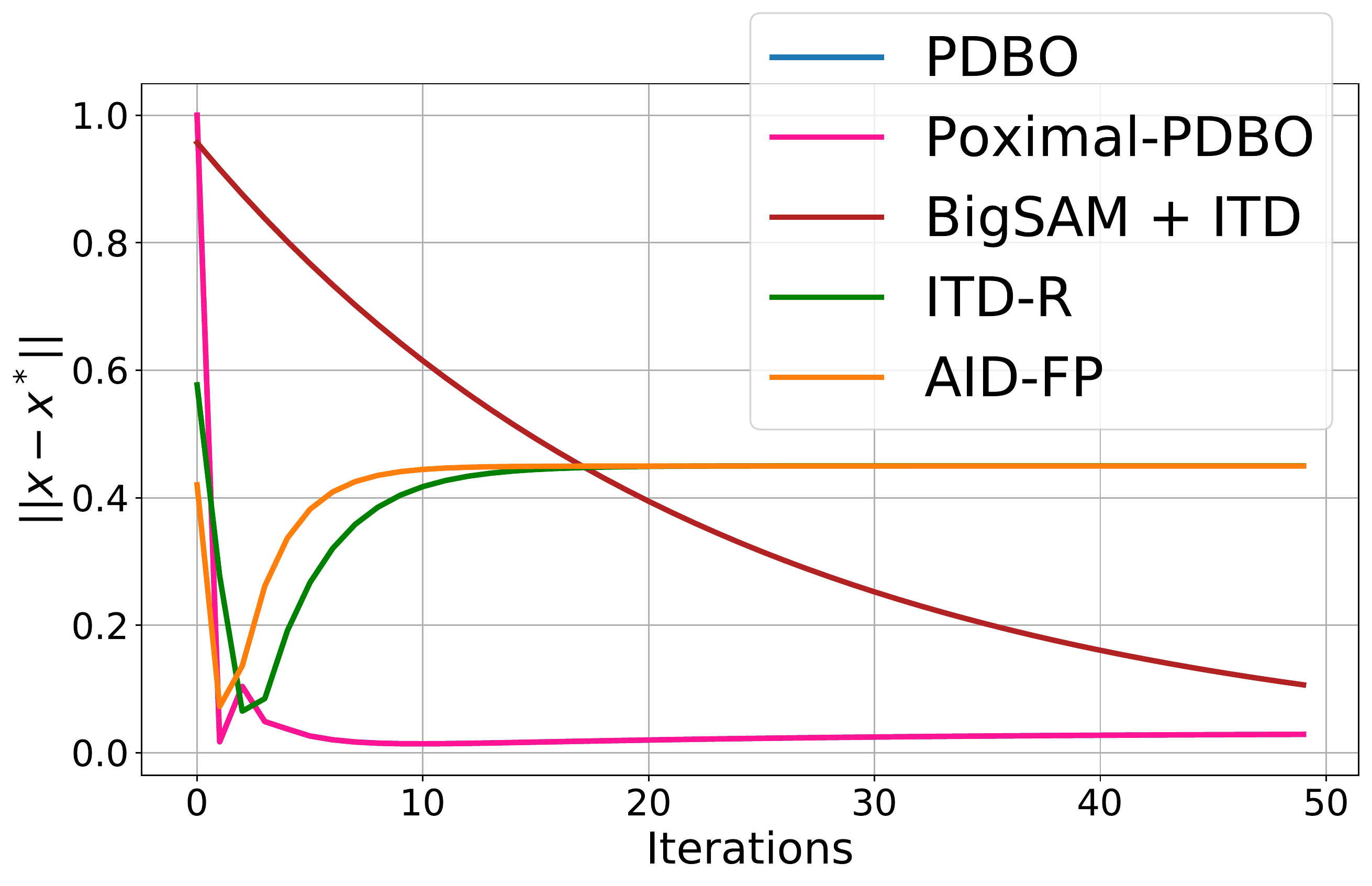}
			&\includegraphics[width=4.2cm,height=2.8cm]{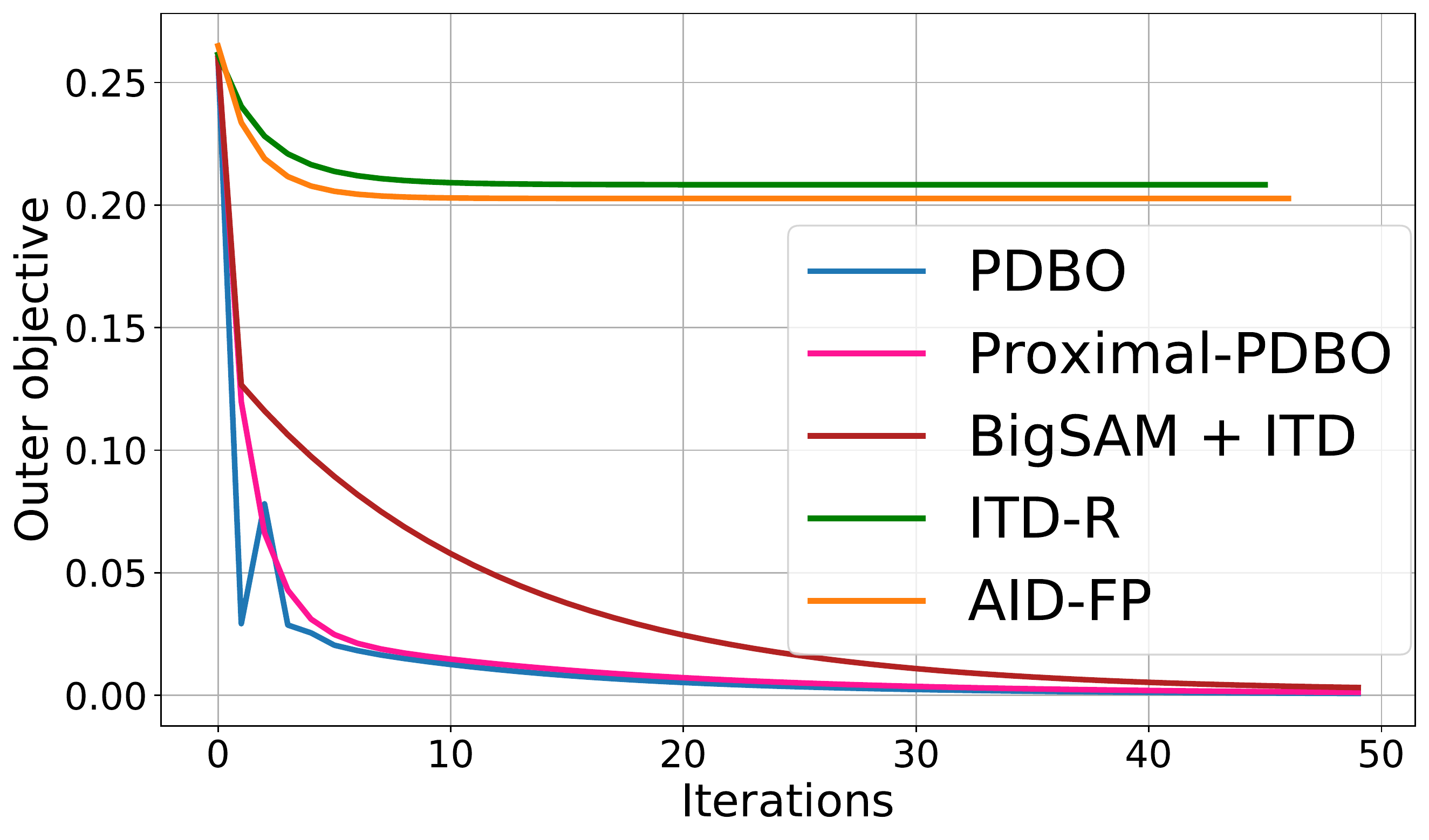}\\ 
			\small{(g) inner opt. gap v.s. iterations} & \small{(h) $\|y - y^*\|$ v.s. iterations} & \small{(i) $\|x - x^*\|$ v.s. iterations} \\
			\includegraphics[width=4.2cm,height=2.8cm]{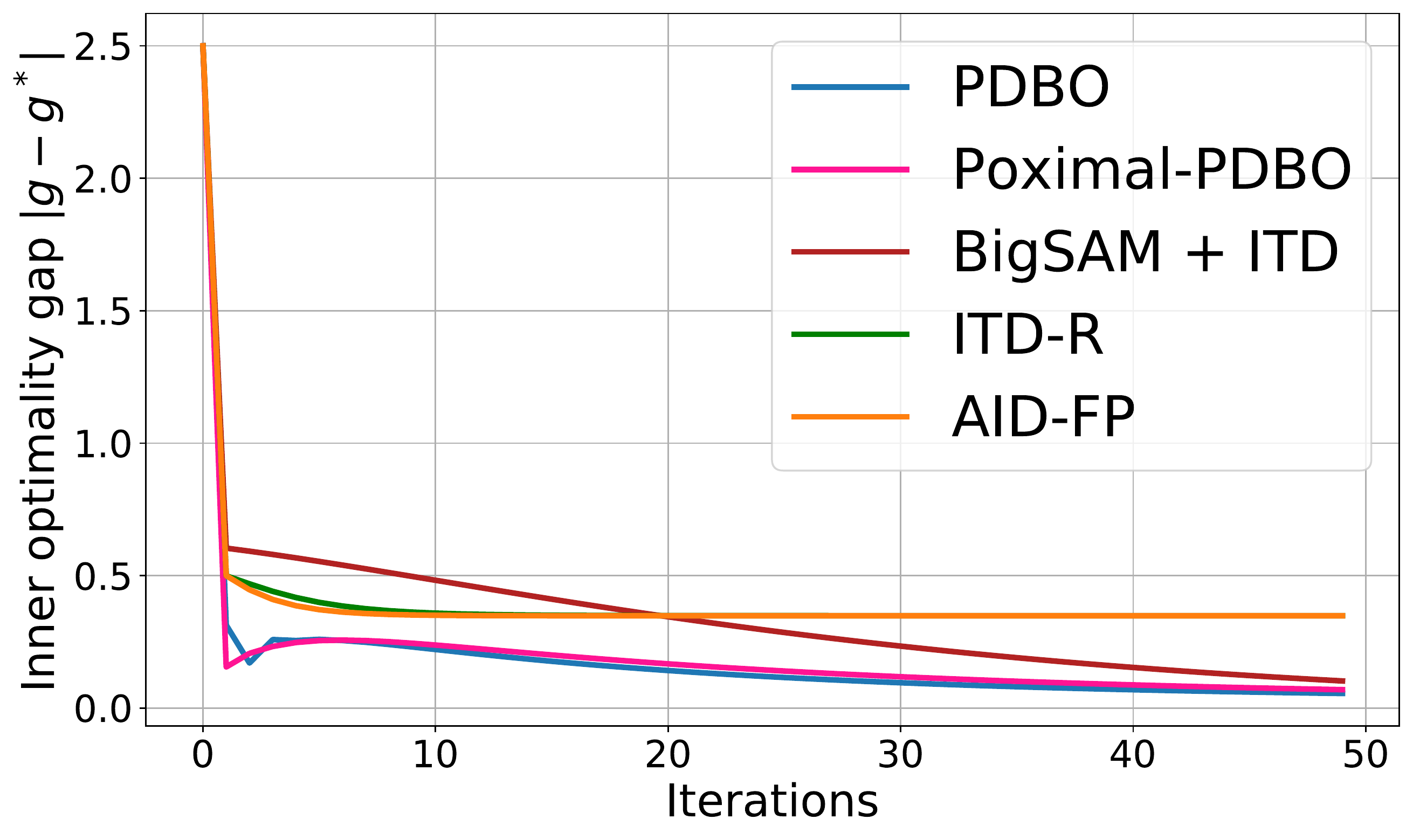}
			&\includegraphics[width=4.2cm,height=2.8cm]{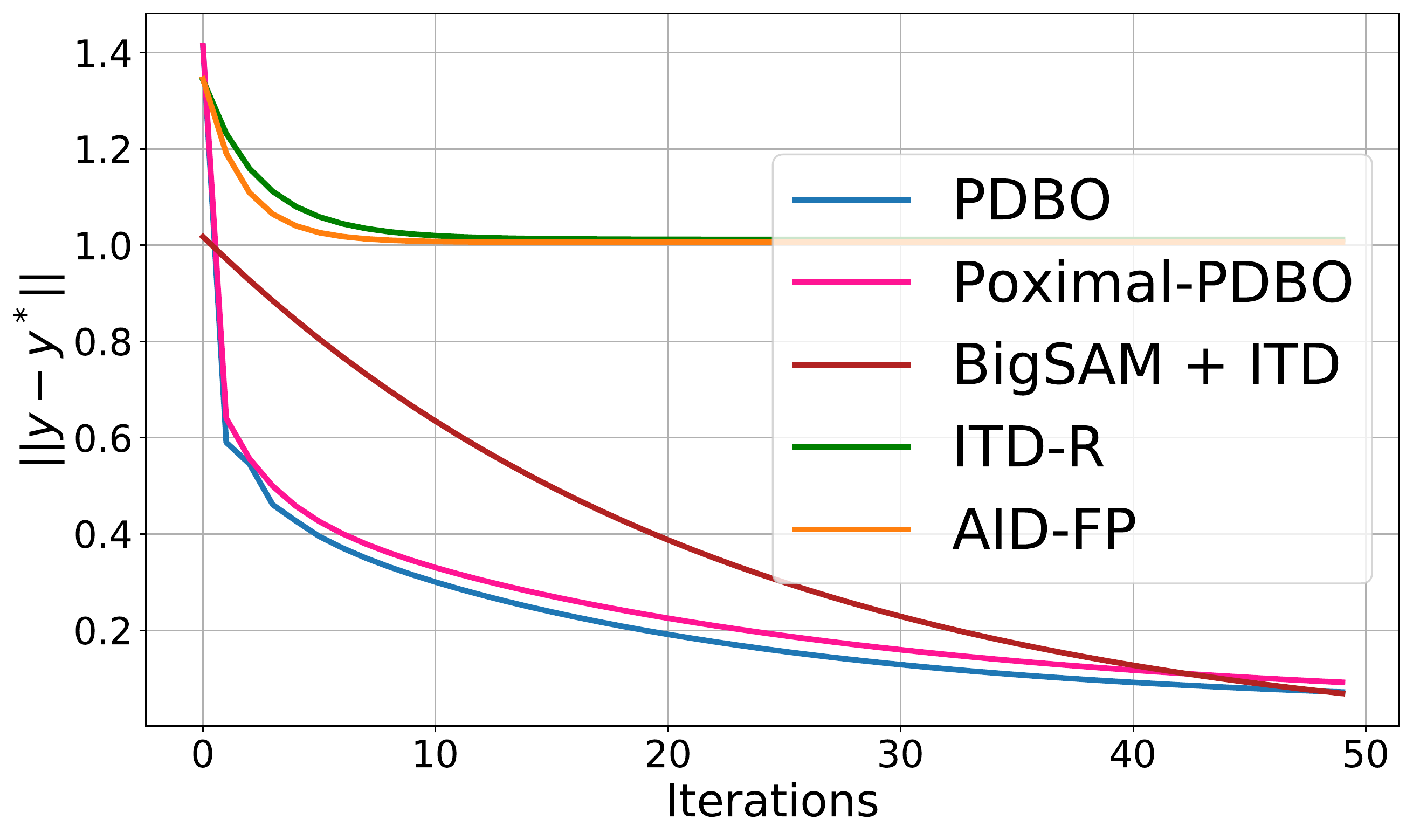}
			&\includegraphics[width=4.2cm,height=2.8cm]{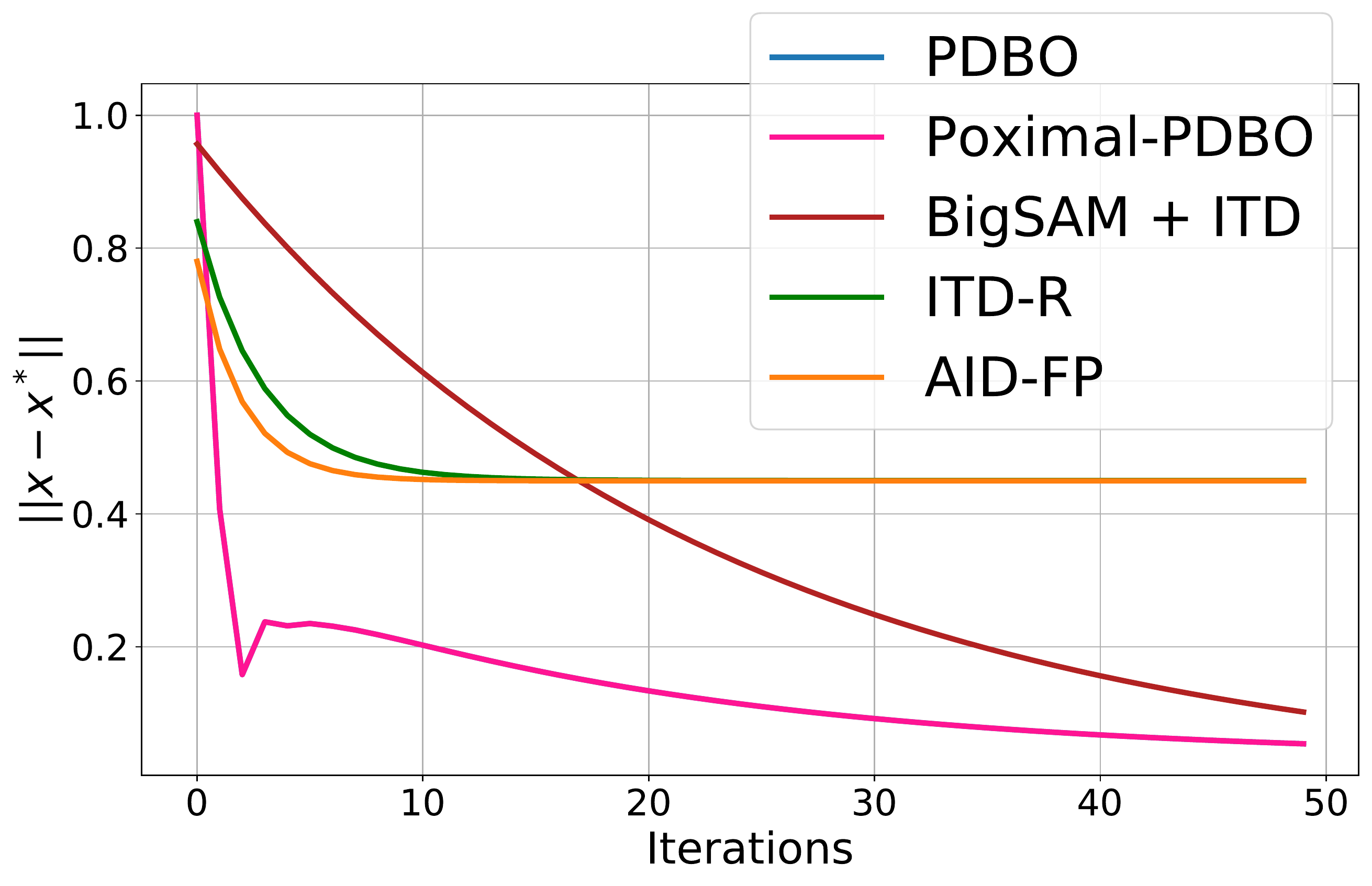} \\ 
		\end{tabular} 
		\caption{Evaluation of the compared algorithms using different optimality metrics. First row and first two plots in second row: $(x_0, y_0) = (2, (0.5, 0.5))$. Last plot in second row and third row: $(x_0, y_0) = (0, (2, 2))$. In both cases, PDBO is initialized with $\lambda=2$.} 
	\label{fig:toy} 
\end{figure*}
For our \textbf{PDBO}, we set the learning rates $\tau_t$, $\eta_t$ to be constants $0.1$, $0.2$ respectively, and $\theta_t=0$. For our \textbf{Proximal-PDBO}, we set the $\tau_t$, $\eta_t$ and $\theta_t$ to be the same as \textbf{PDBO}. Moreover, we specify $T=50$. For all compared methods, we fix the inner and outer learning rates to respectively $0.5$ and $0.2$. We use $N = 5$ gradient descent steps to estimate the minimimal value of the smoothed inner-objective and use the same number of iterations for all compared methods. 

\Cref{fig:toy} shows several evaluation metrics for the algorithms under comparison over different initialization points. It can be seen that our two algorithms \textbf{(Proximal-)PDBO} reach the optimal solution at the fastest rate. Also as analytically proved in \citep{liu2020generic}, several plots show that, with different initialization points, the classical AID and ITD methods cannot converge to the global optimal solution of the problem in \cref{eq:toy}. In particular, algorithms \textbf{AID-FP} and \textbf{ITD-R} are both stuck in a bad local minima of the problem (\cref{fig:toy} (c), (d), (e), (h), and (i)). This is essentially due to the very restrictive unique minimizer assumption of these methods. When this fundamental requirement is not met, such approaches are not equipped with mechanisms to select among the multiple minimizers. Instead, our algorithm, which solves a constrained optimization problem, leverages the constraint set to guide the optimization process. In the appendix, we have provide evidence that our \textbf{PDBO} do converge to a KKT point of the reformulated problem in \cref{eq:reform3} for the problem \cref{eq:toy}. 

{\bf Further Experiments.} Besides the example in \cref{eq:toy}, we conduct another experiment (presented in the appendix) to demonstrate that in practice \textbf{PDBO} can converge well for a broader class of functions even when $g(x, y)$ is not convex on $y$. The results show that  \textbf{PDBO} outperforms all state-of-the-art methods with a large margin.

\subsection{Hyperparameter Optimization}
The goal of hyperparameter optimization (HO) is to search for the set of hyperparameters that yield the optimal value of some model selection criterion (e.g., loss on unseen data). HO can be naturally expressed as a bilevel optimization problem, in which at the inner level one searches for the model parameters that achieve the lowest training loss for given hyperparameters. At the outer level, one optimizes the hyperparameters over a validation dataset. The problem can be mathematically formulated as follows 
\begin{equation*}
	\min_{\lambda, w\in\mathcal{W}(\lambda)} \mathcal{L}_{\mathrm{val}}(\lambda, w) \coloneqq \frac{1}{\left|\mathcal{D}_{\mathrm{val}}\right|} \sum_{\xi \in \mathcal{D}_{\mathrm{val}}} \mathcal{L}\left(\lambda, w ; \xi\right),\quad \mbox{with} \quad \mathcal{W}(\lambda) \coloneqq\argmin_{w} \mathcal{L}_{\mathrm{tr}}( \lambda, w),
\end{equation*} 
where 
	$\mathcal{L}_{\mathrm{tr}}( \lambda, w):=\frac{1}{\left|\mathcal{D}_{\mathrm{tr}}\right|} \sum_{\zeta \in \mathcal{D}_{\mathrm{tr}}}(\mathcal{L}(\lambda, w ; \zeta)+\mathcal{R}(\lambda, w))$, $\gL$ is a loss function, $\mathcal{R}(w, \lambda)$ is a regularizer, and $\gD_{\mathrm{tr}}$ and $\gD_{\mathrm{val}}$ are respectively training and validation data.  

Following \citep{franceschi2017forward, grazzi2020bo}, we perform classification on the 20 Newsgroup dataset, where the classifier is modeled by an affine transformation and the cost function $\gL$ is the cross-entrpy loss. 
We set one $\ell_2$-regularization hyperparameter for each weight in $w$, so that $\lambda$ and $w$ have the same size. 
For our algorithm \textbf{PDBO}, we optimize the parameters and hyperparameters using gradient descent with a fixed learning rate of $\eta_t^{-1} = 100$. We set the learning rate for the dual variable to be $\tau_t^{-1}=0.001$. We use $N = 5$ gradient descent steps to estimate the minimal value of the smoothed inner-objective. For \textbf{BigSAM+ITD}, we set the averaging parameter to $0.5$ and fix the inner and outer learning rates to be $100$. For \textbf{AID-FP} and \textbf{ITD-R}, we use the suggested parameters in their implementations accompanying the paper \citep{grazzi2020bo}. 
\begin{figure}[htbp] 
\centering 
\begin{tabular}{cc}
    \includegraphics[width=6cm,height=3.5cm]{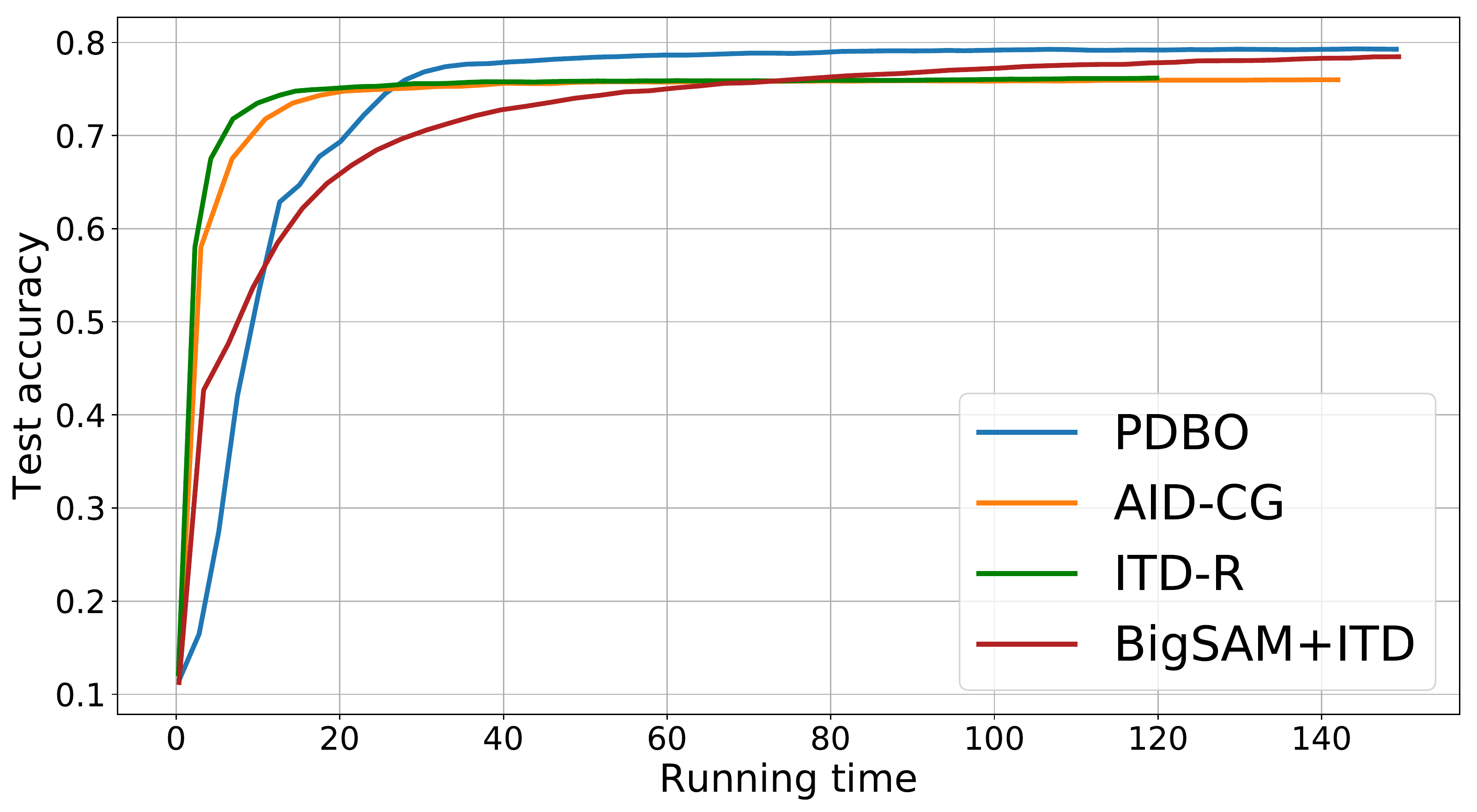} &\qquad \qquad\qquad\begin{minipage}{0.45\textwidth}{
        \vspace{-3.5cm}
        \begin{tabular}{c|c}
        Algo. & Acc. \\
        \hline \hline 
        PDBO& \bf 79.26 \\ 
        \hline
        AID-FP& 76.20 \\ 
        \hline
        ITD-R& 76.20 \\
        \hline
        BigSAM+ITD& 78.92 \\
        \hline
        \end{tabular}
        }
        \end{minipage}
\end{tabular} 
\caption{Classification results on 20 Newsgroup datase. {\bf Left plot:} accuracy on test data v.s. running time. {\bf Right table:} final best test accuracy.}  
\label{fig:20news} 
\end{figure}

The evaluations of the algorithms under comparison on a hold-out test dataset is shown in \Cref{fig:20news}. It can be seen that our algorithm {\bf PDBO} significantly improves over AID and ITD methods, and slightly outperforms \textbf{BigSAM+ITD} method with a much faster convergence speed. 
 
\section{Conclusion}

In this paper, we investigated a bilevel optimization problem where the inner-level function has multiple minima. Based on the reformulation of such a problem as an  constrained optimization problem, we designed two algorithms PDBO and Proximal-PDBO using primal-dual gradient descent and ascent method. Specifically, PDBO features a simple design and implementation, and Proximal-PDBO features a strong convergence guarantee that we can establish. We further conducted experiments to demonstrate the desirable performance of our algorithm. As future work, it is interesting to study bilevel problems with a nonconvex inner-level function, which can have multiple inner minima. While our current design can serve as a starting point, finding a good characterization of the set of inner minima can be challenging.

\bibliography{iclr2021_conference}

\begin{thebibliography}{36}
\providecommand{\natexlab}[1]{#1}
\providecommand{\url}[1]{\texttt{#1}}
\expandafter\ifx\csname urlstyle\endcsname\relax
  \providecommand{\doi}[1]{doi: #1}\else
  \providecommand{\doi}{doi: \begingroup \urlstyle{rm}\Url}\fi

\bibitem[Bertinetto et~al.(2018)Bertinetto, Henriques, Torr, and
  Vedaldi]{bertinetto2018meta}
Luca Bertinetto, Joao~F Henriques, Philip Torr, and Andrea Vedaldi.
\newblock Meta-learning with differentiable closed-form solvers.
\newblock In \emph{International Conference on Learning Representations
  (ICLR)}, 2018.

\bibitem[Boob et~al.(2019)Boob, Deng, and Lan]{boob2019stochastic}
Digvijay Boob, Qi~Deng, and Guanghui Lan.
\newblock Stochastic first-order methods for convex and nonconvex functional
  constrained optimization.
\newblock \emph{arXiv preprint arXiv:1908.02734}, 2019.

\bibitem[Chen et~al.(2021)Chen, Sun, and Yin]{chen2021single}
Tianyi Chen, Yuejiao Sun, and Wotao Yin.
\newblock A single-timescale stochastic bilevel optimization method.
\newblock \emph{arXiv preprint arXiv:2102.04671}, 2021.

\bibitem[Dempe \& Zemkoho(2020)Dempe and Zemkoho]{dempe2020bilevel}
Stephan Dempe and Alain Zemkoho.
\newblock Bilevel optimization.
\newblock In \emph{Springer optimization and its applications. Vol. 161}.
  Springer, 2020.

\bibitem[Domke(2012)]{domke2012generic}
Justin Domke.
\newblock Generic methods for optimization-based modeling.
\newblock In \emph{Artificial Intelligence and Statistics (AISTATS)}, pp.\
  318--326, 2012.

\bibitem[Franceschi et~al.(2017)Franceschi, Donini, Frasconi, and
  Pontil]{franceschi2017forward}
Luca Franceschi, Michele Donini, Paolo Frasconi, and Massimiliano Pontil.
\newblock Forward and reverse gradient-based hyperparameter optimization.
\newblock In \emph{International Conference on Machine Learning (ICML)}, pp.\
  1165--1173, 2017.

\bibitem[Franceschi et~al.(2018)Franceschi, Frasconi, Salzo, Grazzi, and
  Pontil]{franceschi2018bilevel}
Luca Franceschi, Paolo Frasconi, Saverio Salzo, Riccardo Grazzi, and
  Massimiliano Pontil.
\newblock Bilevel programming for hyperparameter optimization and
  meta-learning.
\newblock In \emph{International Conference on Machine Learning (ICML)}, pp.\
  1568--1577, 2018.

\bibitem[Ghadimi \& Wang(2018)Ghadimi and Wang]{ghadimi2018approximation}
Saeed Ghadimi and Mengdi Wang.
\newblock Approximation methods for bilevel programming.
\newblock \emph{arXiv preprint arXiv:1802.02246}, 2018.

\bibitem[Gould et~al.(2016)Gould, Fernando, Cherian, Anderson, Cruz, and
  Guo]{gould2016differentiating}
Stephen Gould, Basura Fernando, Anoop Cherian, Peter Anderson, Rodrigo~Santa
  Cruz, and Edison Guo.
\newblock On differentiating parameterized argmin and argmax problems with
  application to bi-level optimization.
\newblock \emph{arXiv preprint arXiv:1607.05447}, 2016.

\bibitem[Grazzi et~al.(2020{\natexlab{a}})Grazzi, Franceschi, Pontil, and
  Salzo]{grazzi2020bo}
Riccardo Grazzi, Luca Franceschi, Massimiliano Pontil, and Saverio Salzo.
\newblock On the iteration complexity of hypergradient computation.
\newblock \emph{International Conference on Machine Learning (ICML))},
  2020{\natexlab{a}}.

\bibitem[Grazzi et~al.(2020{\natexlab{b}})Grazzi, Franceschi, Pontil, and
  Salzo]{grazzi2020iteration}
Riccardo Grazzi, Luca Franceschi, Massimiliano Pontil, and Saverio Salzo.
\newblock On the iteration complexity of hypergradient computation.
\newblock In \emph{Proc. International Conference on Machine Learning (ICML)},
  2020{\natexlab{b}}.

\bibitem[Guo \& Yang(2021)Guo and Yang]{guo2021randomized}
Zhishuai Guo and Tianbao Yang.
\newblock Randomized stochastic variance-reduced methods for stochastic bilevel
  optimization.
\newblock \emph{arXiv preprint arXiv:2105.02266}, 2021.

\bibitem[Hong et~al.(2020)Hong, Wai, Wang, and Yang]{hong2020two}
Mingyi Hong, Hoi-To Wai, Zhaoran Wang, and Zhuoran Yang.
\newblock A two-timescale framework for bilevel optimization: Complexity
  analysis and application to actor-critic.
\newblock \emph{arXiv preprint arXiv:2007.05170}, 2020.

\bibitem[Ji \& Liang(2021)Ji and Liang]{ji2021lower}
Kaiyi Ji and Yingbin Liang.
\newblock Lower bounds and accelerated algorithms for bilevel optimization.
\newblock \emph{arXiv preprint arXiv:2102.03926}, 2021.

\bibitem[Ji et~al.(2020)Ji, Lee, Liang, and Poor]{ji2020convergence}
Kaiyi Ji, Jason~D Lee, Yingbin Liang, and H~Vincent Poor.
\newblock Convergence of meta-learning with task-specific adaptation over
  partial parameter.
\newblock In \emph{Advances in Neural Information Processing Systems
  (NeurIPS)}, 2020.

\bibitem[Ji et~al.(2021)Ji, Yang, and Liang]{ji2021bo}
Kayi Ji, Junjie Yang, and Yingbin Liang.
\newblock Bilevel optimization: Convergence analysis and enhanced design.
\newblock \emph{International Conference on Machine Learning (ICML))}, 2021.

\bibitem[Khanduri et~al.(2021)Khanduri, Zeng, Hong, Wai, Wang, and
  Yang]{khanduri2021near}
Prashant Khanduri, Siliang Zeng, Mingyi Hong, Hoi-To Wai, Zhaoran Wang, and
  Zhuoran Yang.
\newblock A near-optimal algorithm for stochastic bilevel optimization via
  double-momentum.
\newblock \emph{arXiv preprint arXiv:2102.07367}, 2021.

\bibitem[Konda \& Tsitsiklis(2000)Konda and Tsitsiklis]{konda2000actor}
Vijay~R Konda and John~N Tsitsiklis.
\newblock Actor-critic algorithms.
\newblock In \emph{Advances in neural information processing systems
  (NeurIPS)}, pp.\  1008--1014, 2000.

\bibitem[Lan(2020)]{lan2020first}
Guanghui Lan.
\newblock \emph{First-order and Stochastic Optimization Methods for Machine
  Learning}.
\newblock Springer Nature, 2020.

\bibitem[Li et~al.(2020)Li, Gu, and Huang]{li2020improved}
Junyi Li, Bin Gu, and Heng Huang.
\newblock Improved bilevel model: Fast and optimal algorithm with theoretical
  guarantee.
\newblock \emph{arXiv preprint arXiv:2009.00690}, 2020.

\bibitem[Liao et~al.(2018)Liao, Xiong, Fetaya, Zhang, Yoon, Pitkow, Urtasun,
  and Zemel]{liao2018reviving}
Renjie Liao, Yuwen Xiong, Ethan Fetaya, Lisa Zhang, KiJung Yoon, Xaq Pitkow,
  Raquel Urtasun, and Richard Zemel.
\newblock Reviving and improving recurrent back-propagation.
\newblock In \emph{Proc. International Conference on Machine Learning (ICML)},
  2018.

\bibitem[Liu et~al.(2018)Liu, Simonyan, and Yang]{liu2018darts}
Hanxiao Liu, Karen Simonyan, and Yiming Yang.
\newblock Darts: Differentiable architecture search.
\newblock \emph{arXiv preprint arXiv:1806.09055}, 2018.

\bibitem[Liu et~al.(2020)Liu, Mu, Yuan, Zeng, and Zhang]{liu2020generic}
Risheng Liu, Pan Mu, Xiaoming Yuan, Shangzhi Zeng, and Jin Zhang.
\newblock A generic first-order algorithmic framework for bi-level programming
  beyond lower-level singleton.
\newblock In \emph{International Conference on Machine Learning (ICML)}, 2020.

\bibitem[Liu et~al.(2021{\natexlab{a}})Liu, Liu, Yuan, Zeng, and
  Zhang]{liu2021value}
Risheng Liu, Xuan Liu, Xiaoming Yuan, Shangzhi Zeng, and Jin Zhang.
\newblock A value-function-based interior-point method for non-convex bi-level
  optimization.
\newblock In \emph{International Conference on Machine Learning (ICML)},
  2021{\natexlab{a}}.

\bibitem[Liu et~al.(2021{\natexlab{b}})Liu, Liu, Zeng, and
  Zhang]{liu2021towards}
Risheng Liu, Yaohua Liu, Shangzhi Zeng, and Jin Zhang.
\newblock Towards gradient-based bilevel optimization with non-convex followers
  and beyond.
\newblock \emph{Advances in Neural Information Processing Systems (NeurIPS)},
  34, 2021{\natexlab{b}}.

\bibitem[Lorraine et~al.(2020)Lorraine, Vicol, and
  Duvenaud]{lorraine2020optimizing}
Jonathan Lorraine, Paul Vicol, and David Duvenaud.
\newblock Optimizing millions of hyperparameters by implicit differentiation.
\newblock In \emph{International Conference on Artificial Intelligence and
  Statistics (AISTATS)}, pp.\  1540--1552. PMLR, 2020.

\bibitem[Ma et~al.(2020)Ma, Lin, and Yang]{ma2019proximally}
Runchao Ma, Qihang Lin, and Tianbao Yang.
\newblock Proximally constrained methods for weakly convex optimization with
  weakly convex constraints.
\newblock In \emph{Proc. International Conference on Machine Learning (ICML)},
  2020.

\bibitem[MacKay et~al.(2019)MacKay, Vicol, Lorraine, Duvenaud, and
  Grosse]{mackay2019self}
Matthew MacKay, Paul Vicol, Jon Lorraine, David Duvenaud, and Roger Grosse.
\newblock Self-tuning networks: Bilevel optimization of hyperparameters using
  structured best-response functions.
\newblock \emph{arXiv preprint arXiv:1903.03088}, 2019.

\bibitem[Maclaurin et~al.(2015)Maclaurin, Duvenaud, and
  Adams]{maclaurin2015gradient}
Dougal Maclaurin, David Duvenaud, and Ryan Adams.
\newblock Gradient-based hyperparameter optimization through reversible
  learning.
\newblock In \emph{International Conference on Machine Learning (ICML)}, pp.\
  2113--2122, 2015.

\bibitem[Mangasarian \& Fromovitz(1967)Mangasarian and
  Fromovitz]{mangasarian1967fritz}
Olvi~L Mangasarian and Stan Fromovitz.
\newblock The fritz john necessary optimality conditions in the presence of
  equality and inequality constraints.
\newblock \emph{Journal of Mathematical Analysis and applications}, 17\penalty0
  (1):\penalty0 37--47, 1967.

\bibitem[Nesterov et~al.(2018)]{nesterov2018lectures}
Yurii Nesterov et~al.
\newblock \emph{Lectures on convex optimization}, volume 137.
\newblock Springer, 2018.

\bibitem[Pedregosa(2016)]{pedregosa2016hyperparameter}
Fabian Pedregosa.
\newblock Hyperparameter optimization with approximate gradient.
\newblock In \emph{International Conference on Machine Learning (ICML)}, pp.\
  737--746, 2016.

\bibitem[Rajeswaran et~al.(2019)Rajeswaran, Finn, Kakade, and
  Levine]{rajeswaran2019meta}
Aravind Rajeswaran, Chelsea Finn, Sham~M Kakade, and Sergey Levine.
\newblock Meta-learning with implicit gradients.
\newblock In \emph{Advances in Neural Information Processing Systems
  (NeurIPS)}, pp.\  113--124, 2019.

\bibitem[Sabach \& Shtern(2017)Sabach and Shtern]{sabach2017first}
Shoham Sabach and Shimrit Shtern.
\newblock A first order method for solving convex bilevel optimization
  problems.
\newblock \emph{SIAM Journal on Optimization}, 27\penalty0 (2):\penalty0
  640--660, 2017.

\bibitem[Shaban et~al.(2019)Shaban, Cheng, Hatch, and
  Boots]{shaban2019truncated}
Amirreza Shaban, Ching-An Cheng, Nathan Hatch, and Byron Boots.
\newblock Truncated back-propagation for bilevel optimization.
\newblock In \emph{International Conference on Artificial Intelligence and
  Statistics (AISTATS)}, pp.\  1723--1732, 2019.

\bibitem[Yang et~al.(2021)Yang, Ji, and Liang]{yang2021provably}
Junjie Yang, Kaiyi Ji, and Yingbin Liang.
\newblock Provably faster algorithms for bilevel optimization.
\newblock \emph{arXiv preprint arXiv:2106.04692}, 2021.

\end{thebibliography}
\bibliographystyle{iclr2021_conference}

\newpage
\appendix 
\textbf{\Large Supplementary Materials}
\section{Optimization Paths of PDBO}
 In \Cref{fig:toymore}, we plot the optimization paths of \textbf{PDBO} when solving the problem in \cref{eq:toy}. 
The two plots show that the optimization terminates with a strictly positive dual variable and that the constraint is satisfied with equality. Thus, the slackness condition is satisfied. This also confirms that our algorithm \textbf{PDBO} did converge to a KKT point of the reformulated problem.
\begin{figure}[h]
    \centering
	\begin{tabular}{cc}
		\small{(a) dual variable v.s. iterations}  & \small{(b) constraint v.s. iterations} \\
		\includegraphics[width=6cm,height=4cm]{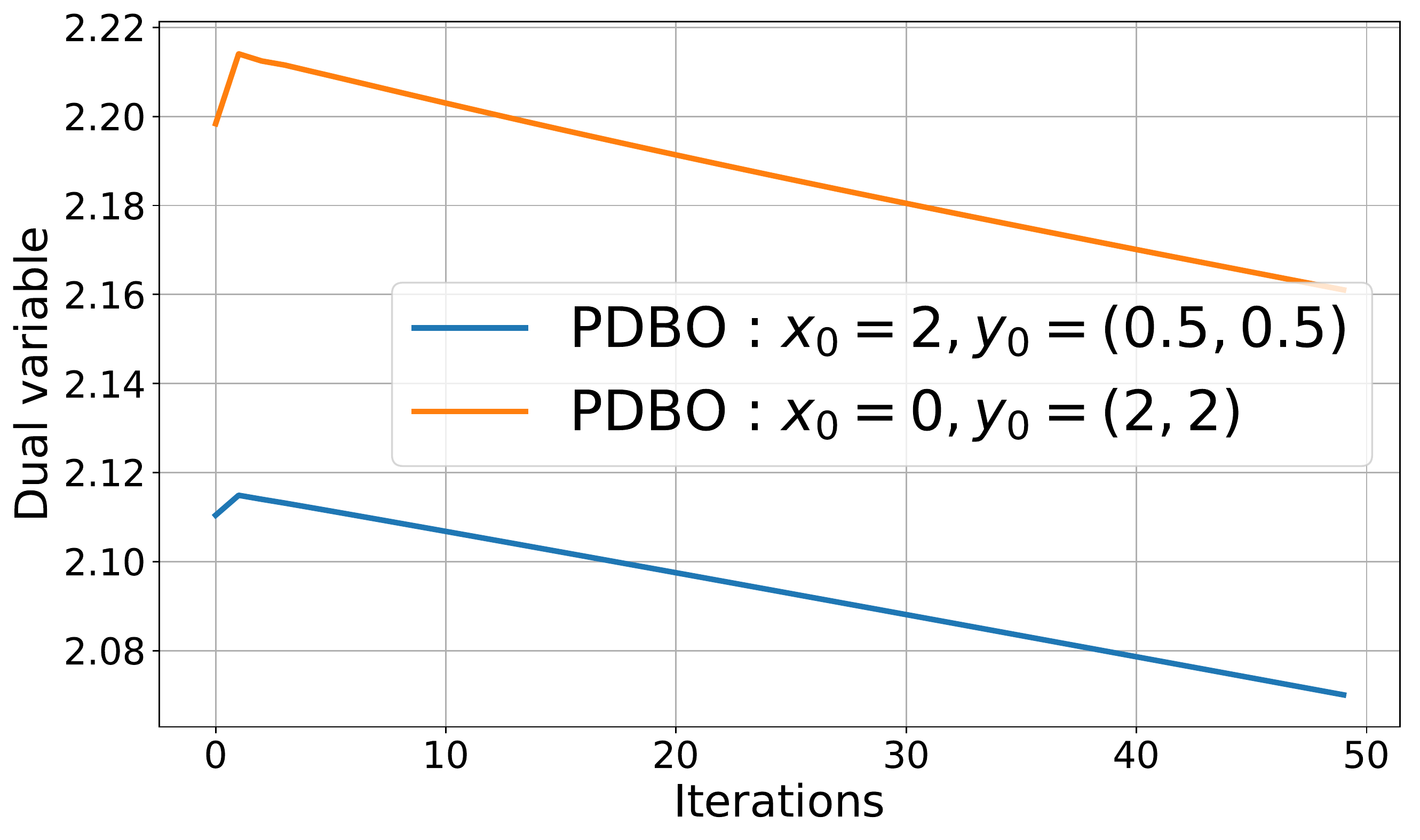}
		&\includegraphics[width=6cm,height=4cm]{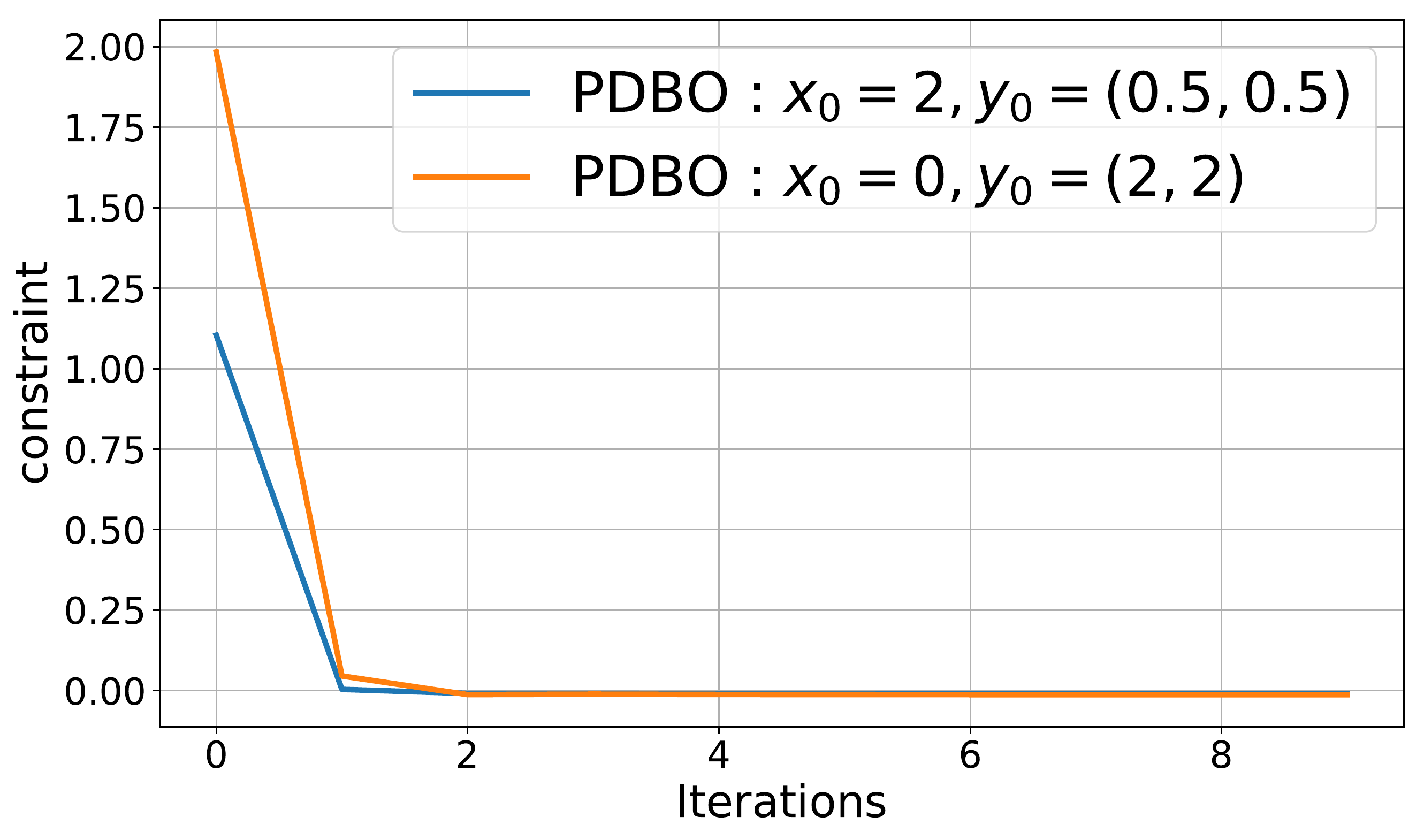}
	\end{tabular}
	\vspace{-3mm}
	\caption{Optimization path of dual variable and constraint values for different initializations.}
\label{fig:toymore}
\vspace{-3mm}
\end{figure} 

\section{Additional Experiment with Multiple Minima}

In this section, we demonstrate that \textbf{(Proximal-)PDBO} is applicable to a more general class of bilevel problems in practice, where the inner problem is not necessarily convex on $y$ (as we require in the theoretical analysis), but still have multiple minimal points.  

Consider the following bilevel optimization problem
 \begin{align}
     \min _{x \in \mathcal{C}, y \in \mathcal{S}_x} \|x-a\|^{2}+\|y-a\|^{2},\quad \mbox{where}\quad \mathcal{S}_x \coloneqq \argmin_{y\in \mathcal{C}} \sin (x+y),\label{eq:toy2}
 \end{align}
 where $\mathcal{C} = [-10, 10]$, and $a$ is a constant (we set $a=0$ in our experiment). 

Clearly, the problem in \cref{eq:toy2} does not degenerate to single inner minimum bilevel optimization due to the sinusoid in the lower function. Such a problem is harder than the one in \cref{eq:toy}, and it violates the assumption that $g(x,y)$ is convex on $y$.

In this experiment, we compare \textbf{PDBO}, \textbf{Proximal-PDBO}, \textbf{BigSAM+ITD}, \textbf{ITD-R} and \textbf{AID-FP}. We initialize all methods with $(x,y) = (3,3)$ and the hyperparameters are set to be the same as what we did in \Cref{sec:experiment}. The results are provided in \Cref{fig:newtoyes}. The plots show that \textbf{ITD-R} and \textbf{AID-FP} methods converge to a bad stationary point and could not find good solutions. The methods \textbf{PDBO}, \textbf{Proximal-PDBO}, and \textbf{BigSAM+ITD} converge to better points but our algorithms \textbf{PDBO} and \textbf{Proximal-PDBO} significantly outperform \textbf{BigSAM+ITD}. 
\vspace{-0.4cm}
\begin{figure}[!h]
\centering
\begin{tabular}{ccc}
    (a) outer optimality gap & (b) outer convergence error  
     & (c) inner convergence error\\
    \includegraphics[width=4.5cm]{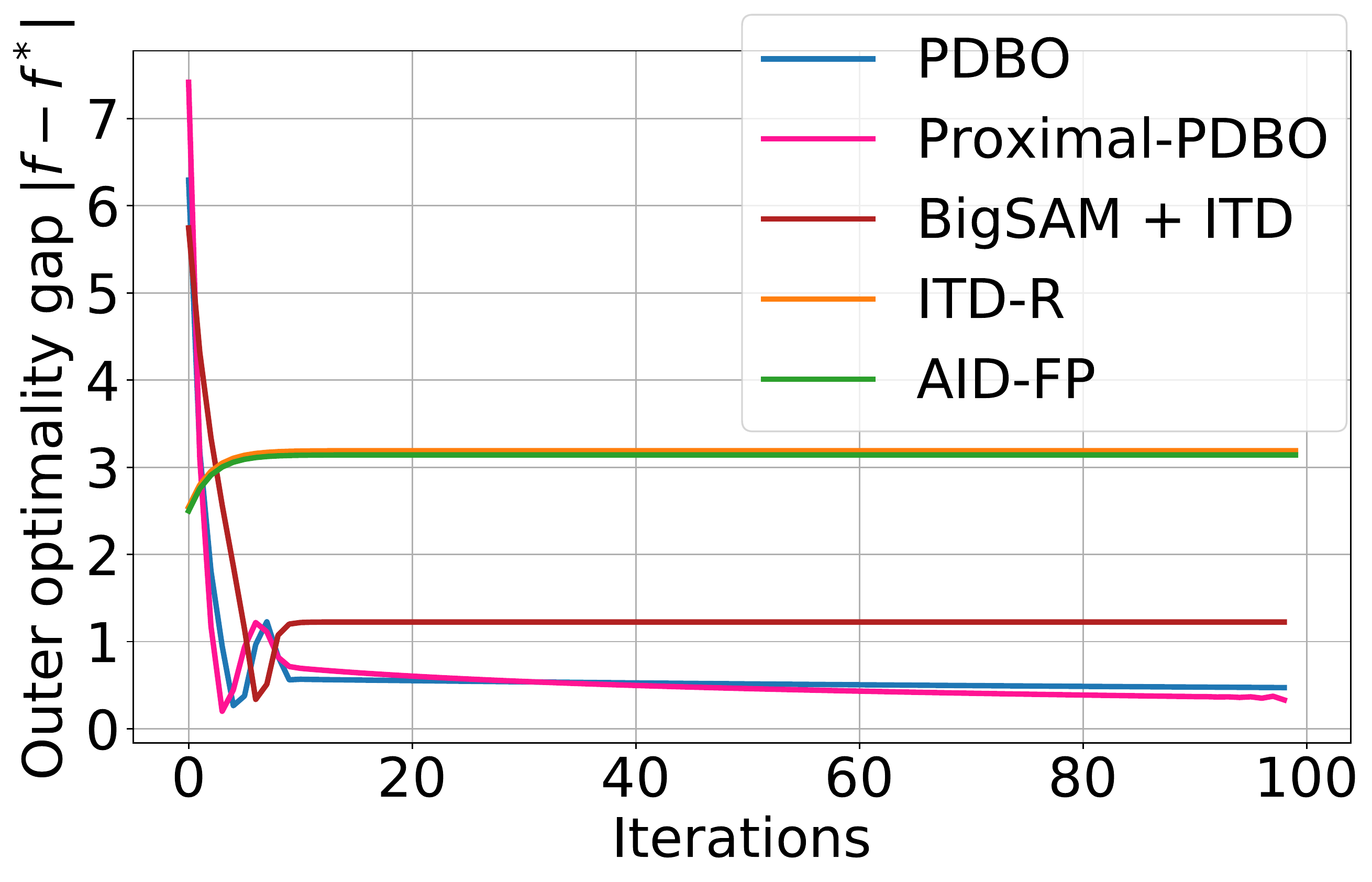} & \includegraphics[width=4.5cm]{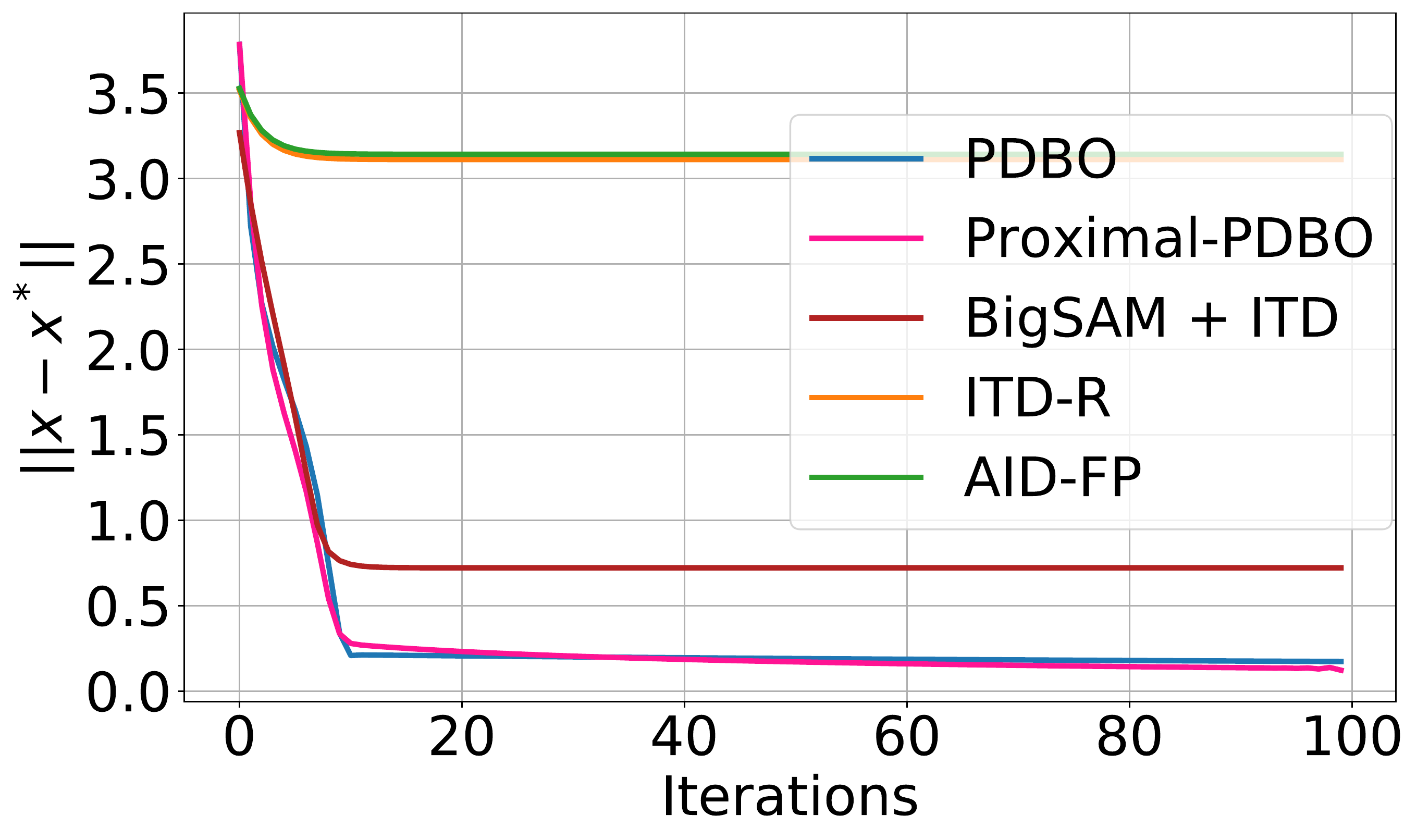} &\includegraphics[width=4.5cm]{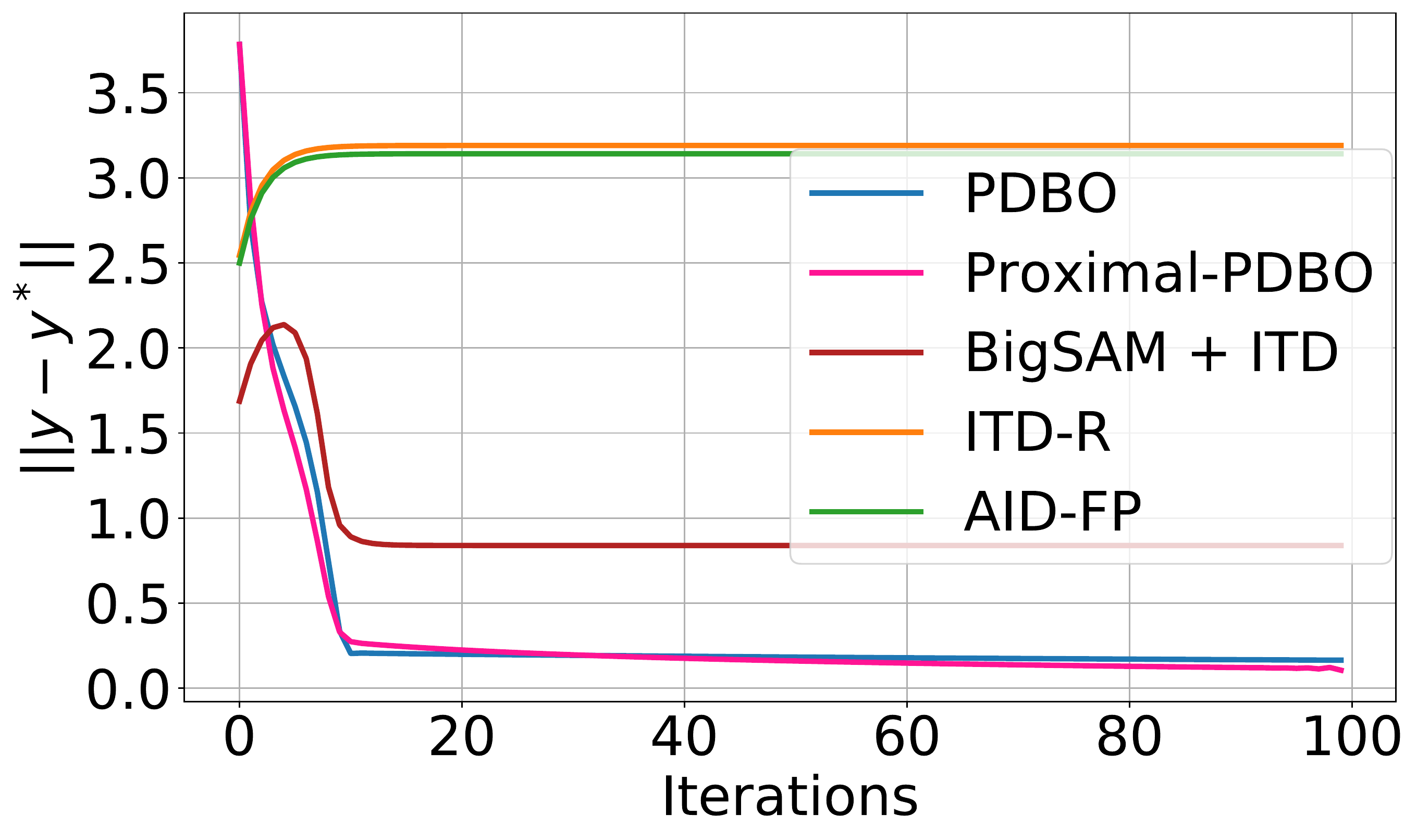} 
\end{tabular}
    \caption{Comparison of different algorithms}\label{fig:newtoyes}
\end{figure}  

\section{Calculation of $\nabla \tilde{h}(z)$ in \Cref{sec:formulation}}
For completeness, we provide the steps for obtaining the form of $\nabla \tilde{h}(z)$ in \Cref{sec:formulation}. For the ease of reading, we restate the result here. 
Suppose that \Cref{ass:smoothness} holds. Then the gradients $\nabla_x \tilde{h}(x,y)$ and $\nabla_y \tilde{h}(x,y)$ of function $\tilde{h}(x, y)$ take the following forms: 
\begin{align}\label{gradhx}
\nabla_x \tilde{h}(x,y) &= \nabla_x g(x,y) - \nabla_x g(x,\tilde{y}^*(x)), \\ 
\nabla_y \tilde{h}(x,y) &= \nabla_y g(x,y). \label{gradhy}
\end{align} 
\begin{proof}
First, \cref{gradhy} follows immediately. Hence, we prove only \cref{gradhx}. 
Recall the definition of $\tilde{h}(x, y)$:
\begin{align*}
	\tilde{h}(x, y) = g(x,y) - \tilde{g}^*(x) - \delta, 
\end{align*}
where $\tilde{g}^*(x) = \tilde{g}(x,\tilde{y}^*(x))$ with $\tilde{y}^*(x) = \argmin_y \tilde{g}(x,y) = g(x,y) + \frac{\alpha}{2}\|y\|^2.$
Using the chain rule to compute the gradient with respect to $x$ of function $\tilde{h}(x, y) = g(x,y) - g(x,\tilde{y}^*(x)) - \frac{\alpha}{2}\|\tilde{y}^*(x)\|^2 - \delta$ yields: 
\begin{align}
	\nabla_x \tilde{h}(x,y) &= \nabla_x g(x,y) - \big[\nabla_x g(x,\tilde{y}^*(x)) +  \frac{\partial \tilde{y}^*(x)}{\partial x} \nabla_y g(x,\tilde{y}^*(x))\big] - \alpha \frac{\partial \tilde{y}^*(x)}{\partial x}\tilde{y}^*(x) \nonumber\\ 
	& = \nabla_x g(x,y) - \nabla_x g(x,\tilde{y}^*(x)) - \frac{\partial \tilde{y}^*(x)}{\partial x} \big[\nabla_y g(x,\tilde{y}^*(x)) + \alpha \tilde{y}^*(x)\big]. \nonumber
\end{align}
The first order optimality condition ensures that $\nabla_y g(x,\tilde{y}^*(x)) + \alpha \tilde{y}^*(x) = 0$. Hence, we obain the desired result: 
\begin{align}
	\nabla_x \tilde{h}(x,y) &= \nabla_x g(x,y) - \nabla_x g(x,\tilde{y}^*(x)). \nonumber
\end{align}
\end{proof}

\section{Proof of \Cref{thm:pdboconvergence}} \label{sec:proofpdboconvergence}
\subsection{Supporting Lemmas}
We first cite two standard lemmas, which are useful for our proof here.
\begin{lemma}[Lemma 3.5 \cite{lan2020first}]\label{lemma:threepoint}
	Suppose that $\mathcal{S}$ is a convex and closed subset of $\mathbb{R}^n$, $x\in\mathcal{S}$, and $v\in\mathbb{R}^n$. Define 
	$\bar{x} = \Pi_{\mathcal{S}} \left(x - v\right)$.
	Then, for any $\tilde{x} \in\mathcal{S}$, the following inequality holds:
	\begin{align*}
		\langle x,v\rangle+ \tfrac{1}{2}\|\bar{x} -\tilde{x}\|_2^2 + \tfrac{1}{2}\|x -\bar{x}\|_2^2 \le \tfrac{1}{2}\|x -\tilde{x}\|_2^2.
	\end{align*}
\end{lemma}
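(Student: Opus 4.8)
The plan is to derive the inequality directly from the first-order optimality (variational) characterization of the Euclidean projection onto a convex set, combined with a standard polarization identity. First I would recall that, by definition, $\bar{x} = \Pi_{\mathcal{S}}(x - v)$ is the unique minimizer over $\mathcal{S}$ of the strongly convex differentiable function $y \mapsto \tfrac{1}{2}\|y - (x - v)\|_2^2$. Since $\mathcal{S}$ is convex and closed, the first-order optimality condition yields the variational inequality $\langle \bar{x} - (x - v),\, \tilde{x} - \bar{x}\rangle \ge 0$ for every $\tilde{x} \in \mathcal{S}$, equivalently $\langle (x - v) - \bar{x},\, \tilde{x} - \bar{x}\rangle \le 0$.

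Next I would isolate the term involving $v$. Splitting the inner product gives $\langle x - \bar{x},\, \tilde{x} - \bar{x}\rangle \le \langle v,\, \tilde{x} - \bar{x}\rangle$, and negating both sides produces $\langle v,\, \bar{x} - \tilde{x}\rangle \le \langle x - \bar{x},\, \bar{x} - \tilde{x}\rangle$. The remaining step is purely algebraic: applying the identity $\langle a, b\rangle = \tfrac{1}{2}\big(\|a + b\|_2^2 - \|a\|_2^2 - \|b\|_2^2\big)$ with $a = x - \bar{x}$ and $b = \bar{x} - \tilde{x}$, so that $a + b = x - \tilde{x}$, gives
\[
\langle x - \bar{x},\, \bar{x} - \tilde{x}\rangle = \tfrac{1}{2}\|x - \tilde{x}\|_2^2 - \tfrac{1}{2}\|x - \bar{x}\|_2^2 - \tfrac{1}{2}\|\bar{x} - \tilde{x}\|_2^2.
\]
Substituting this into the previous inequality and transposing the two negative squared-norm terms to the left-hand side then produces exactly $\langle v,\, \bar{x} - \tilde{x}\rangle + \tfrac{1}{2}\|\bar{x} - \tilde{x}\|_2^2 + \tfrac{1}{2}\|x - \bar{x}\|_2^2 \le \tfrac{1}{2}\|x - \tilde{x}\|_2^2$, which is the asserted bound.

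Since this is the standard projection (``three-point'') inequality, I do not anticipate a genuine obstacle; the only points requiring care are the direction of the variational inequality and the sign bookkeeping when converting the cross term into squared norms. I would also reconcile the first term of the displayed statement: the derivation naturally produces $\langle v,\, \bar{x} - \tilde{x}\rangle$, whereas the statement as typeset reads $\langle x, v\rangle$, which appears to be a typographical compression of the cited result, so I would verify the intended form against \cite{lan2020first} before finalizing.
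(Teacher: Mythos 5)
Your derivation is correct: the paper does not prove this lemma at all (it is imported verbatim as Lemma 3.5 of \cite{lan2020first}), and your argument---projection optimality $\langle \bar{x}-(x-v),\,\tilde{x}-\bar{x}\rangle\ge 0$ followed by the polarization identity $2\langle x-\bar{x},\,\bar{x}-\tilde{x}\rangle=\|x-\tilde{x}\|_2^2-\|x-\bar{x}\|_2^2-\|\bar{x}-\tilde{x}\|_2^2$---is the standard self-contained proof of exactly this statement. Your suspicion about the first term is also right: $\langle x,v\rangle$ is a typo for $\langle \bar{x}-\tilde{x},\,v\rangle$, which you can confirm from how the paper invokes the lemma in the proof of \Cref{thm:pdboconvergencere} (e.g., with $v=-\hat{d}_t/\tau_t$, $\bar{x}=\lambda_{t+1}$, $\tilde{x}=\lambda$ it yields $-(\lambda_{t+1}-\lambda)\hat{d}_t\le \tfrac{\tau_t}{2}\left((\lambda-\lambda_t)^2-(\lambda_{t+1}-\lambda_t)^2-(\lambda-\lambda_{t+1})^2\right)$, which only matches the $\langle \bar{x}-\tilde{x},\,v\rangle$ form).
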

\begin{lemma}[Theorem 2.2.14 \cite{nesterov2018lectures}]\label{lemma:pgd}
	Suppose that \Cref{ass:smoothness} holds. Consider the projected gradient descent in \cref{eq:projectedgradientdescent}. Define $\tilde{y}^*(x_t) \coloneqq \argmin_{y\in\mathcal{Y}} g(x_t, y) + \tfrac{\alpha}{2}\|y\|_2^2$. We have 
	\begin{align*}
		\|\hat{y}^*(x_t) - \tilde{y}^*(x_t)\|_2 = \|\hat{y}_N - \tilde{y}^*(x_t)\|_2 \le \left(1 - \tfrac{\alpha}{\rho_g + 2\alpha}\right)^N \|\hat{y}_0 - \tilde{y}^*(x_t)\|_2.
	\end{align*}
\end{lemma}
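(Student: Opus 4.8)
The plan is to recognize \Cref{lemma:pgd} as the classical linear-convergence guarantee for projected gradient descent on a strongly convex, gradient-Lipschitz objective, and to instantiate it with the correct curvature constants for the regularized inner problem. First I would pin down the regularity of the function minimized in \cref{eq:projectedgradientdescent}, namely $\tilde{g}(x_t, y) = g(x_t, y) + \tfrac{\alpha}{2}\|y\|_2^2$. By \Cref{ass:smoothness}, $g(x_t, \cdot)$ is convex with $\rho_g$-Lipschitz gradient; adding the quadratic regularizer makes $\tilde{g}(x_t, \cdot)$ exactly $\mu$-strongly convex and $L$-smooth with $\mu = \alpha$ and $L = \rho_g + \alpha$. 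In particular, the prescribed step size $\tfrac{2}{\rho_g + 2\alpha}$ equals $\tfrac{2}{\mu + L}$, the canonical choice for which the sharpest one-step contraction is available.

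Next I would set up the contraction. Write the projected-gradient map as $G(y) = \Pi_{\mathcal{Y}}\big(y - \tfrac{2}{\rho_g+2\alpha}(\nabla_y g(x_t, y) + \alpha y)\big)$. The minimizer $\tilde{y}^*(x_t)$ is a fixed point of $G$: this follows from the first-order variational condition $\langle \nabla_y \tilde{g}(x_t, \tilde{y}^*(x_t)),\, y - \tilde{y}^*(x_t)\rangle \ge 0$ for all $y\in\mathcal{Y}$, which is equivalent to $\tilde{y}^*(x_t) = \Pi_{\mathcal{Y}}\big(\tilde{y}^*(x_t) - h\nabla_y \tilde{g}(x_t, \tilde{y}^*(x_t))\big)$ for any $h > 0$. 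The key estimate is then that $G$ is a contraction with factor $\tfrac{L-\mu}{L+\mu}$, obtained by combining (i) the standard fact that the unprojected step $y \mapsto y - h\nabla_y \tilde{g}(x_t, y)$ contracts distances by $\max\{|1-h\mu|, |1-hL|\}$ for a $\mu$-strongly convex, $L$-smooth function, which at $h = \tfrac{2}{\mu+L}$ equals $\tfrac{L-\mu}{L+\mu}$, with (ii) the non-expansiveness of the Euclidean projection $\Pi_{\mathcal{Y}}$ onto the convex set $\mathcal{Y}$. Since $\tilde{g}$ is only $C^1$ rather than twice differentiable, I would justify (i) through the co-coercivity inequality for strongly convex smooth functions rather than an eigenvalue bound on the Hessian.

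Finally I would iterate: applying $G$ to $\hat{y}_n$ and to the fixed point $\tilde{y}^*(x_t)$ and using the contraction $N$ times gives $\|\hat{y}_N - \tilde{y}^*(x_t)\|_2 \le \big(\tfrac{L-\mu}{L+\mu}\big)^N \|\hat{y}_0 - \tilde{y}^*(x_t)\|_2$. Substituting $L-\mu = \rho_g$ and $L+\mu = \rho_g + 2\alpha$ yields contraction factor $\tfrac{\rho_g}{\rho_g+2\alpha} = 1 - \tfrac{2\alpha}{\rho_g + 2\alpha}$, which is bounded above by the stated $1 - \tfrac{\alpha}{\rho_g+2\alpha}$, giving the claim.

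Since this is essentially a textbook statement (cited as Theorem~2.2.14 of \cite{nesterov2018lectures}), there is no genuinely hard step; the only points requiring care are adapting the unconstrained convergence result to the constrained, projected setting via the fixed-point characterization of $\tilde{y}^*(x_t)$ together with the non-expansiveness of $\Pi_{\mathcal{Y}}$, and observing that the slightly loose constant in the statement follows a fortiori from the sharper $\tfrac{L-\mu}{L+\mu}$ rate.
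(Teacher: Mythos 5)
Your proof is correct: the paper itself offers no proof of \cref{lemma:pgd}, importing it directly from Nesterov's Theorem~2.2.14, and your argument (strong convexity/smoothness of $\tilde{g}(x_t,\cdot)$ with $\mu=\alpha$, $L=\rho_g+\alpha$; fixed-point characterization of $\tilde{y}^*(x_t)$ via the variational inequality; non-expansiveness of $\Pi_{\mathcal{Y}}$; co-coercivity giving the one-step contraction at step size $\tfrac{2}{\mu+L}$) is precisely the standard derivation behind that citation. Your observation that the sharp factor $\tfrac{L-\mu}{L+\mu}=1-\tfrac{2\alpha}{\rho_g+2\alpha}$ is strictly smaller than the stated $1-\tfrac{\alpha}{\rho_g+2\alpha}$ is also correct, so the lemma follows a fortiori.
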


Next, we establish an upper bound on the optimal dual variable in the following lemma.
\begin{lemma}
Suppose that \Cref{ass:smoothness,ass:strongassp} hold. Then, there exists $\lambda^*$ satisfying $0\le \lambda^*\le \tfrac{D_f}{\delta}$, where $D_f = \sup_{z, z^\prime\in\mathcal{Z}} |f(z) - f(z^\prime)|$, so that for
$z^* := \text{argmin}_{z\in\mathcal{Z}} f(z) + \lambda^* \tilde{h}(z)$, $\tilde{h}(z^*)\le 0$, and the KKT condition holds, i.e.,  $\lambda^*\tilde{h}(z^*) = 0$, and $\nabla f(z^*) + \lambda^*\nabla \tilde{h}(z^*) \in -\mathcal{N}_{\mathcal{Z}}(z^*)$, where $\mathcal{N}_{\mathcal{Z}}(z^*)$ is the normal cone defined as $\mathcal{N}_{\mathcal{Z}}(z^*) = \{ v \in \mathbb{R}^{p+d}: \langle v, z- z^*\rangle\le 0, \mbox{ for all } z \in\mathcal{Z}\}$.  
\end{lemma}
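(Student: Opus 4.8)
The plan is to obtain the lemma from convex duality, since \Cref{ass:strongassp} turns the program in \cref{eq:reform3} into a convex one: $f$ is $\mu$-strongly convex, $\tilde h$ is convex, and $\mathcal{Z}$ is convex and closed. The engine of the argument is a \emph{quantitative} Slater point. I would first show that for any $\bar x\in\mathcal{X}$ the point $\bar z=(\bar x,\tilde y^*(\bar x))$ is strictly feasible with a uniform margin: using $\tilde g^*(\bar x)=\tilde g(\bar x,\tilde y^*(\bar x))=g(\bar x,\tilde y^*(\bar x))+\tfrac{\alpha}{2}\|\tilde y^*(\bar x)\|_2^2$, one gets
\[
\tilde h(\bar z)=g(\bar z)-\tilde g^*(\bar z)-\delta=-\tfrac{\alpha}{2}\|\tilde y^*(\bar x)\|_2^2-\delta\le -\delta<0 .
\]
This is precisely the role of the relaxation constant $\delta$ in \cref{eq:reform3}: it upgrades the possibly non-strict feasibility of the inner minimizer into strict feasibility with a uniform margin independent of the problem data.

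Next I would invoke convex duality. Under convexity together with the Slater point from the first step, strong duality holds, so the dual function $q(\lambda)=\min_{z\in\mathcal{Z}}\{f(z)+\lambda\tilde h(z)\}$ attains its maximum over $\lambda\ge 0$ at some $\lambda^*\ge 0$ with $q(\lambda^*)$ equal to the primal optimum. Since $f+\lambda^*\tilde h$ is $\mu$-strongly convex, the constrained minimizer $z^*:=\argmin_{z\in\mathcal{Z}}\{f(z)+\lambda^*\tilde h(z)\}$ is unique and, by the saddle-point property $\mathcal{L}(z^*,\lambda)\le\mathcal{L}(z^*,\lambda^*)\le\mathcal{L}(z,\lambda^*)$, coincides with the primal solution. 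The stationarity $\nabla f(z^*)+\lambda^*\nabla\tilde h(z^*)\in-\mathcal{N}_{\mathcal{Z}}(z^*)$ is the first-order optimality condition for minimizing the Lagrangian over $\mathcal{Z}$, while primal feasibility $\tilde h(z^*)\le 0$ and complementary slackness $\lambda^*\tilde h(z^*)=0$ read off from the same saddle-point inequalities.

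Finally I would bound $\lambda^*$ by evaluating the dual function at the Slater point $\bar z$:
\[
f(z^*)=q(\lambda^*)\le f(\bar z)+\lambda^*\tilde h(\bar z)\le f(\bar z)-\lambda^*\delta ,
\]
where the last step uses $\tilde h(\bar z)\le-\delta$ and $\lambda^*\ge 0$. Rearranging yields $\lambda^*\delta\le f(\bar z)-f(z^*)\le D_f$, hence $0\le\lambda^*\le D_f/\delta$, as claimed.

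The standard pieces (strong convexity $\Rightarrow$ uniqueness of $z^*$; the saddle-point inequalities $\Rightarrow$ the KKT system) I would state briefly. The step carrying the actual content is the interplay between the Slater construction and the dual estimate: recognizing that the $\delta$-relaxation furnishes a margin that is uniform across all subproblems is exactly what converts the generic ``assume the dual variables are bounded'' hypothesis of \cite{boob2019stochastic} into the explicit bound $D_f/\delta$. The only technical point to check carefully is that $\tilde y^*(\bar x)$, and thus $\bar z\in\mathcal{Z}$, is well defined, which holds because $\tilde g(\bar x,\cdot)$ is $\alpha$-strongly convex over the closed, convex, bounded set $\mathcal{Y}$.
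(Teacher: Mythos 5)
Your proposal is correct and follows essentially the same route as the paper's proof: construct the Slater point $(\bar x,\tilde y^*(\bar x))$ with margin $-\delta$, invoke strong duality to get the KKT multiplier $\lambda^*$, and bound it by evaluating the dual function at the Slater point to obtain $\lambda^*\delta\le D_f$. (If anything, your observation that $\tilde h(\bar z)=-\tfrac{\alpha}{2}\|\tilde y^*(\bar x)\|_2^2-\delta\le-\delta$ is slightly more careful than the paper's statement that it equals $-\delta$.)
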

\begin{proof}
Pick any $x_0\in\mathcal{X}$. Let $y_0 = \argmin_{y\in\mathcal{Y}} g(x,y) + \tfrac{\alpha}{2}\|y\|_2^2$, and $z_0 = (x_0, y_0)$. Then, $\tilde{h}(z_0) = g(x_0, y_0) - \tilde{g}^*(x_0) -\delta = -\delta$ holds, which implies that $z_0$ is a strictly feasible point. The existence of such a strictly feasible point $z_0$ ensures that the Slater's condition holds, and then the standard result (see, e.g., \cite{lan2020first}) implies the existence of $\lambda^*\ge 0$ that satisfies for
$z^* := \text{argmin}_{z\in\mathcal{Z}} f(z) + \lambda^* \tilde{h}(z)$, $\tilde{h}(z^*)\le 0$,  $\lambda^*\tilde{h}(z^*) = 0$, and $\nabla f(z^*) + \lambda^*\nabla \tilde{h}(z^*) \in -\mathcal{N}_{\mathcal{Z}}(z^*)$.  

Define the dual function as $d(\lambda): = \min_{z\in\mathcal{Z}} \mathcal{L}(z, \lambda)$. Then, we have, for any $\lambda$ and $z\in\mathcal{Z}$, 
\begin{align}\label{eq:dualfunctionbound}
	d(\lambda) \le  f(z_0) + \lambda\hat h(z_0) = f(z_0)  -\delta\lambda.
\end{align}
Taking $\lambda=\lambda^*$ in \cref{eq:dualfunctionbound} and using the fact that $|d(\lambda^*)- f(z_0)| = | f(z^*)- f(z_0)|\le D_f$, where $D_f= \sup_{z, z^\prime\in\mathcal{Z}} |f(z) - f(z^\prime)|$, we complete the proof.
\end{proof}
In the next lemma, we show that the constrained function $\tilde{h}(z)$ is gradient Lipschitz continuous.
\begin{lemma}\label{lemma:lipschitzh}
Suppose that \Cref{ass:smoothness} holds. Then, the gradient $\nabla \tilde{h}(z) = \big(\nabla_x \tilde{h}(x,y), \nabla_y \tilde{h}(x,y)\big)^\top$ is Lipschitz continuous with constant $\rho_h = \rho_g (2+\rho_g/\alpha)$. 
\end{lemma}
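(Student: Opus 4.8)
The plan is to exploit the explicit form $\nabla \tilde{h}(z) = \nabla g(z) - (\nabla_x g(x, \tilde{y}^*(x)); \mathbf{0}_d)$ established earlier, and reduce the whole estimate to two ingredients: the gradient Lipschitzness of $g$ from \Cref{ass:smoothness}, and the Lipschitz continuity of the inner minimizer map $x \mapsto \tilde{y}^*(x)$. Writing $z = (x,y)$ and $z' = (x', y')$, I would first apply the triangle inequality to split
\begin{equation*}
\|\nabla \tilde{h}(z) - \nabla \tilde{h}(z')\|_2 \le \|\nabla g(z) - \nabla g(z')\|_2 + \|\nabla_x g(x, \tilde{y}^*(x)) - \nabla_x g(x', \tilde{y}^*(x'))\|_2.
\end{equation*}
The first term is at most $\rho_g\|z - z'\|_2$ directly by \Cref{ass:smoothness}. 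For the second term I would view $(x, \tilde{y}^*(x))$ and $(x', \tilde{y}^*(x'))$ as two points in $\mathcal{X}\times\mathcal{Y}$ and again invoke the gradient Lipschitzness of $g$, obtaining
\begin{equation*}
\|\nabla_x g(x, \tilde{y}^*(x)) - \nabla_x g(x', \tilde{y}^*(x'))\|_2 \le \rho_g \big(\|x - x'\|_2 + \|\tilde{y}^*(x) - \tilde{y}^*(x')\|_2\big),
\end{equation*}
where I use $\sqrt{a^2+b^2}\le a+b$ to separate the two coordinate displacements.

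The key step is to show that $x \mapsto \tilde{y}^*(x)$ is $(\rho_g/\alpha)$-Lipschitz. Since $\tilde{g}(x,\cdot)$ is $\alpha$-strongly convex, its constrained minimizer over $\mathcal{Y}$ is characterized by the variational inequality $\langle \nabla_y \tilde{g}(x, \tilde{y}^*(x)), y - \tilde{y}^*(x)\rangle \ge 0$ for all $y\in\mathcal{Y}$. Writing this inequality at $x$ tested against $\tilde{y}^*(x')$ and at $x'$ tested against $\tilde{y}^*(x)$, and adding, I would get $\langle \nabla_y \tilde{g}(x', \tilde{y}^*(x')) - \nabla_y \tilde{g}(x, \tilde{y}^*(x)),\, \tilde{y}^*(x') - \tilde{y}^*(x)\rangle \le 0$. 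Splitting the left side into a strong-convexity term (bounded below by $\alpha\|\tilde{y}^*(x) - \tilde{y}^*(x')\|_2^2$, using $\nabla_y \tilde{g} = \nabla_y g + \alpha y$) and a cross term in $x$ (bounded below via $\|\nabla_y g(x', \cdot) - \nabla_y g(x, \cdot)\|_2 \le \rho_g\|x - x'\|_2$ and Cauchy--Schwarz) yields $\alpha\|\tilde{y}^*(x) - \tilde{y}^*(x')\|_2 \le \rho_g\|x - x'\|_2$, which is the claimed Lipschitz bound.

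Finally I would combine the pieces: using $\|x - x'\|_2 \le \|z - z'\|_2$ together with the argmin bound, the second term is at most $\rho_g(1 + \rho_g/\alpha)\|z - z'\|_2$, and adding the first term gives the overall constant $\rho_h = \rho_g(2 + \rho_g/\alpha)$. I expect the main obstacle to be the Lipschitz continuity of the projected inner minimizer $\tilde{y}^*(\cdot)$: one must argue through the variational inequality rather than a plain stationarity condition because of the projection onto $\mathcal{Y}$, while carefully tracking the strong-convexity constant $\alpha$ supplied by the regularizer. Everything else is a routine application of the gradient Lipschitz assumption.
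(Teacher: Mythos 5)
Your proof is correct and arrives at the same constant $\rho_h = \rho_g(2+\rho_g/\alpha)$, but by a genuinely different route. The paper works at second order: it differentiates $\nabla\tilde h$ once more, bounds the spectral norm of the resulting Hessian block by block, and obtains $\|\partial\tilde y^*(x)/\partial x\|_2\le\rho_g/\alpha$ by implicit differentiation of the interior stationarity condition $\nabla_y g(x,\tilde y^*(x))+\alpha\tilde y^*(x)=0$. You instead stay at first order: triangle inequality on the gradient difference, gradient Lipschitzness of $g$ applied to the pair of points $(x,\tilde y^*(x))$ and $(x',\tilde y^*(x'))$, and a $(\rho_g/\alpha)$-Lipschitz bound on the argmin map obtained from the variational inequality characterizing the \emph{constrained} minimizer over $\mathcal{Y}$. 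Your version buys two things. First, it only uses what \Cref{ass:smoothness} actually supplies (gradient Lipschitzness), whereas the paper's computation implicitly requires $g$ to be twice differentiable and $\tilde y^*(\cdot)$ to be differentiable. Second, your variational-inequality argument is valid even when $\tilde y^*(x)$ lies on the boundary of $\mathcal{Y}$, where the stationarity identity the paper differentiates need not hold (only the inequality $\langle\nabla_y\tilde g(x,\tilde y^*(x)),y-\tilde y^*(x)\rangle\ge 0$ does). The one mild cost is the inequality $\sqrt{a^2+b^2}\le a+b$, which is what lets both arguments land on the same additive constant; the second-order route gets the analogous splitting from the triangle inequality on the chain-rule expansion of the Hessian block. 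Minor bookkeeping point: in your combination step you should note that $\|\nabla_x g(\cdot)-\nabla_x g(\cdot)\|_2\le\|\nabla g(\cdot)-\nabla g(\cdot)\|_2$ so that the block Lipschitz constant is inherited from the full gradient, which is exactly the observation the paper makes in equations for $\|\nabla^2_{xx}g\|_2$ and $\|\nabla^2_{yx}g\|_2$; with that, your argument is complete.
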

\begin{proof}
Recall the form of $\nabla \tilde{h}(z)$ is given by
\begin{align}
    \nabla \tilde{h}(z) =& \left(\begin{array}{c} \nabla_x g(x,y)\\
    \nabla_y g(x,y)
\end{array}\right) - \left(\begin{array}{c}\nabla_x g(x,\tilde{y}^*(x))\\
    \mathbf{0}_d
\end{array}\right)
    = \nabla g(z) - \left(\begin{array}{c} G_x\\
    G_y
\end{array}\right),\nonumber
\end{align}
where $G_x: = \nabla_x g(x,\tilde{y}^*(x))$ and $G_y := \mathbf{0}_d \in \mathbb{R}^d$ is a vector of all zeros. Taking derivative w.r.t. $z$ yields: 
\begin{align}
    \nabla^2 \tilde{h}(z) =& \nabla^2 g(z) - \left(\begin{array}{cc} \frac{\partial G_x}{\partial x} & \frac{\partial G_x}{\partial y} \\ 
    \frac{\partial G_y}{\partial x} & \frac{\partial G_y}{\partial y} 
\end{array}\right) \nonumber \\
    =& \nabla^2 g(z) - \underbrace{\left(\begin{array}{cc} \frac{\partial \nabla_x g(x,\tilde{y}^*(x))}{\partial x} & \mathbf{0}_{p\times d} \\ 
    \mathbf{0}_{d\times p}   & \mathbf{0}_{d\times d}  
\end{array}\right)}_{M} \label{gradh}.
\end{align}
where $\mathbf{0}_{m\times n} \in \mathbb{R}^{m \times n}$ is a matrix of all zeros. 

Recall that in \Cref{ass:smoothness}, we assume $\|\nabla g(z) - \nabla g(z^\prime)\|_2\le \rho_g\|z - z^\prime\|_2$ for all $z,z^\prime\in\mathcal{Z}$, which immediately implies $\|\nabla^2 g(z)\|_2\le \rho_g$. Note that in the sequel, $\|\cdot\|_2$ of a matrix denotes the spectral norm. Moreover, let \[\nabla^2 g(z) = \begin{pmatrix}\nabla^2_{xx} g(z) & \nabla^2_{xy} g(z)\\ \nabla^2_{yx} g(z) & \nabla^2_{yy} g(z)\end{pmatrix}.\]
Given any $x\in\mathbb{R}^p$, we have  $\nabla^2 g(z) \begin{pmatrix}x\\0\end{pmatrix}= \begin{pmatrix}\nabla^2_{xx} g(z)x\\\nabla^2_{yx} g(z)x\end{pmatrix}$. Thus, the following inequality holds
\begin{align}
    \|\nabla^2_{xx} g(z) x \|_2  \le \left\|\begin{pmatrix}\nabla^2_{xx} g(z)x\\\nabla^2_{yx} g(z)x\end{pmatrix}\right\|_2 = \left\|\nabla^2 g(z) \begin{pmatrix}x\\0\end{pmatrix}\right\|_2 \le \|\nabla^2 g(z)\|_2 \|x\|_2\le \rho_g\|x\|_2.\label{eq:middle60}
\end{align}
Following the definition of the spectral norm of $\nabla_{xx}^2 g(z)$, we have 
\begin{equation}
    \|\nabla_{xx}^2 g(z)\|_2\le \rho_g.\label{eq:l2normxx}
\end{equation}
Following the similar steps to \cref{eq:middle60}, we have 
\begin{equation}
    \|\nabla_{yx}^2 g(z)\|_2\le \rho_g.\label{eq:l2normyx}
\end{equation}
We next upper-bound the spectral norm of the matrix $M$ defined in \cref{gradh}. We have: 
\begin{align}\label{eq:normM}
    \|\nabla^2 \tilde{g}^*(z)\|_2 = \left\|M\right\|_2 &= \left\| \left(\begin{array}{cc} \frac{\partial \nabla_x g(x,\tilde{y}^*(x))}{\partial x} & \mathbf{0}_{p\times d}  \\ 
    \mathbf{0}_{d\times p}   & \mathbf{0}_{d\times d}  
\end{array}\right) \right\|_2 \leq \left\|\frac{\partial \nabla_x g(x,\tilde{y}^*(x))}{\partial x}\right\|_2,
\end{align}
where \cref{eq:normM} follows from the fact that for the block matrix $C = \left(\begin{array}{cc} A & \mathbf{0}  \\ 
    \mathbf{0} & B \end{array}\right)$, we have $\big\| C \big \|_2 \leq \max \big\{\big\| A \big \|_2, \big\| B \big \|_2\big\}$. Further, using the chain rule, we obtain
\begin{align}
    \frac{\partial \nabla_x g(x,\tilde{y}^*(x))}{\partial x} = \nabla_x^2 g(x,\tilde{y}^*(x)) +  \frac{\partial \tilde{y}^*(x)}{\partial x} \nabla_y \nabla_x g(x,\tilde{y}^*(x)).\label{eq:nablagstar}
\end{align}
Thus, taking the norm on both sides of the above equation and applying the triangle inequality yield
\begin{align}
    \Big\|\frac{\partial \nabla_x g(x,\tilde{y}^*(x))}{\partial x}\Big\|_2 &\leq  \Big\|\nabla_{xx}^2 g(x,\tilde{y}^*(x))\Big\|_2 +  \Big\|\frac{\partial \tilde{y}^*(x)}{\partial x}\Big\|_2 \Big\|\nabla_y \nabla_x g(x,\tilde{y}^*(x))\Big\|_2 \nonumber \\
    &\overset{(i)}\leq \rho_g +  \rho_g \left\|\frac{\partial \tilde{y}^*(x)}{\partial x}\right\|_2, \label{partgradx2}
\end{align}
where $(i)$ follows from \cref{eq:l2normxx,eq:l2normyx}.

Applying implicit differentiation w.r.t. $x$ to the optimality condition of $\tilde{y}^*(x)$ implies $\nabla_y g(x,\tilde{y}^*(x)) + \alpha \tilde{y}^*(x) = 0$. This yields
\begin{align*}
    \nabla_x \nabla_y g(x,\tilde{y}^*(x)) + \frac{\partial \tilde{y}^*(x)}{\partial x} \nabla_y^2 g(x,\tilde{y}^*(x)) + \alpha \frac{\partial \tilde{y}^*(x)}{\partial x} = 0,
\end{align*}
which further yields
\begin{align*}
    \frac{\partial \tilde{y}^*(x)}{\partial x} = - \Big[ \nabla_y^2 g(x,\tilde{y}^*(x)) + \alpha \mathbf{I}\Big]^{-1} \nabla_x \nabla_y g(x,\tilde{y}^*(x)).
\end{align*}
Hence, we obtain
\begin{align}
    \Big\|\frac{\partial \tilde{y}^*(x)}{\partial x}\Big\|_2 \leq & \Big\| \Big[\nabla_y^2 g(x,\tilde{y}^*(x)) + \alpha \mathbf{I}\Big]^{-1} \Big\|_2 \Big\| \nabla_x \nabla_y g(x,\tilde{y}^*(x)) \Big\|_2 \leq \frac{\rho_g}{\alpha},  \label{partypartx2}
\end{align}
where the last inequality follows from \Cref{ass:smoothness}. Hence, combining \cref{eq:normM}, \cref{partgradx2}, and \cref{partypartx2}, we have 
\begin{align}
    \|\nabla^2 \tilde{g}^*(z)\|_2= \big\|M\big\|_2 \leq \rho_g +  \rho_g \frac{\rho_g}{\alpha}, \label{eq:nablagstar2}
\end{align}
which, in conjunction of \cref{gradh}, yields 
\begin{align}
    \big \|\nabla^2 \tilde{h}(z)\big\|_2 \leq \big \|\nabla^2 g(z)\big\|_2 + \big\|M\big\|_2 \leq \rho_g + \rho_g + \frac{\rho_g^2}{\alpha} = \rho_g \left(2 + \frac{\rho_g}{\alpha} \right).\nonumber
\end{align}
This completes the proof. 
\end{proof}

\subsection{Proof of \Cref{thm:pdboconvergence}}
Based on the above lemmas, we develop the proof of \Cref{thm:pdboconvergence}. We first formally restate the theorem with the full details.
\begin{theorem}
[Formal Statement of \Cref{thm:pdboconvergence}]\label{thm:pdboconvergencere}
Suppose \Cref{ass:smoothness} holds. Let $\gamma_t = t+t_0+1$, $\eta_t = \frac{\mu(t+t_0+1)}{2}$, $\tau_t = \frac{4L_g^2}{\mu t}$, $\theta_t = \frac{t+t_0}{t+t_0+1}$, where  $L_g= \sup_{z}\|\nabla_z g(z)\|_2$, $t_0 = \frac{2(\rho_f+ B\rho_h)}{\mu}$, $B = \tfrac{D_f}{\delta}+1$, where $D_f = \sup_{z, z^\prime\in\mathcal{Z}}|f(z) - f(z^\prime)|$, and $\rho_h$ is given in \Cref{lemma:lipschitzh}. Then, we have 
\begin{align*} 
	f(\bar{z}) - f( z^*)&\le   \frac{2L_gBD_\mathcal{Z}}{T^2} + \frac{\gamma_0(\eta_0 -\mu)\|{z}^*- z_0\|_2^2}{T^2}+ (\rho_gD_\mathcal{Z} + 4L_g)BD_\mathcal{Z} \left(1 - \tfrac{\alpha}{\rho_g + 2\alpha}\right)^N,
\end{align*}
\begin{align*}
    	[\tilde{h}(\bar{z})]_+\le \frac{2L_gBD_\mathcal{Z}+ \gamma_0\tau_0 B^2+\gamma_0(\eta_0 -\mu){D}_\mathcal{Z}^2}{T^2} +  (\rho_gD_\mathcal{Z} + 4L_g)BD_\mathcal{Z}\left(1 - \tfrac{\alpha}{\rho_g + 2\alpha}\right)^N,
\end{align*}
and
\begin{align*}
    \|\bar{z} - z^*\|_2^2 \le \frac{2{\gamma_0\tau_0} B^2 + 2{\gamma_0(\eta_0 -\mu)}D_\mathcal{Z}^2}{\mu T^2} +  \frac{2(T+t_0+1)^2}{T^2}(\rho_gD_\mathcal{Z} + 4L_g)BD_\mathcal{Z}\left(1 - \tfrac{\alpha}{\rho_g + 2\alpha}\right)^N,
\end{align*}
where $z^* = \argmin_{z\in\mathcal{Z}}\{f(z): \tilde h(z)\le 0\}$.
\end{theorem}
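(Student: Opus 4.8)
The plan is to treat \Cref{alg:saddle} as an extrapolated (accelerated) primal-dual gradient descent-ascent applied to the saddle-point problem $\max_{\lambda\in\Lambda}\min_{z\in\mathcal{Z}}\mathcal{L}(z,\lambda)$ with $\mathcal{L}(z,\lambda)=f(z)+\lambda\tilde h(z)$, and to control the weighted Lagrangian gap $\sum_{t=0}^{T-1}\gamma_t\big(\mathcal{L}(z_{t+1},\lambda)-\mathcal{L}(z^*,\lambda_{t+1})\big)$ for an arbitrary comparator $\lambda\in[0,B]$. The dual update \cref{eq:lambdaupdate1} is exactly a constraint-extrapolation step, using $(1+\theta_t)\hat h(z_t)-\theta_t\hat h(z_{t-1})$ in place of the plain gradient $\tilde h(z_t)$; this extrapolation is what upgrades the rate from $\mathcal{O}(1/T)$ to $\mathcal{O}(1/T^2)$ once combined with $\mu$-strong convexity of $f$. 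The three target quantities will all be read off from a single master inequality by choosing $\lambda$ appropriately at the end.

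First I would derive two per-iteration inequalities. For the primal step \cref{eq:zupdate1} I would apply the three-point lemma (\Cref{lemma:threepoint}) with $\bar x=z_{t+1}$, $x=z_t$, comparator $\tilde x=z^*$, and $v=\tfrac{1}{\eta_t}(\nabla f(z_t)+\lambda_{t+1}\hat\nabla\tilde h(z_t))$, then invoke $\mu$-strong convexity of $f$ and convexity of $\tilde h$ (\Cref{ass:strongassp}) to convert $\langle\nabla f(z_t)+\lambda_{t+1}\nabla\tilde h(z_t),\,z_{t+1}-z^*\rangle$ into the Lagrangian gap plus a $-\tfrac{\mu}{2}\|z_{t+1}-z^*\|_2^2$ term; the gradient smoothness of $\tilde h$ (\Cref{lemma:lipschitzh}, constant $\rho_h$) and of $f$ control the quadratic residual against $\tfrac{\eta_t}{2}\|z_{t+1}-z_t\|_2^2$. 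For the dual step \cref{eq:lambdaupdate1} I would again use \Cref{lemma:threepoint} with comparator $\lambda$ to obtain a bound on $\langle \hat h\text{-extrapolant},\,\lambda_{t+1}-\lambda\rangle$. Crucially, the estimation bias must be tracked throughout: I would define $e_t^h:=\hat h(z_t)-\tilde h(z_t)$ and $e_t^{\nabla}:=\hat\nabla\tilde h(z_t)-\nabla\tilde h(z_t)$ and bound both via \Cref{lemma:pgd}, since $\|\hat y_N-\tilde y^*(x_t)\|_2\le(1-\tfrac{\alpha}{\rho_g+2\alpha})^N\|\hat y_0-\tilde y^*(x_t)\|_2$ together with the $\rho_g$-Lipschitz gradient of $g$ gives $|e_t^h|$ and $\|e_t^{\nabla}\|_2$ of order $(\rho_gD_\mathcal{Z}+4L_g)(1-\tfrac{\alpha}{\rho_g+2\alpha})^N$, which is the source of every $\mathcal{O}(e^{-N})$ term.

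Next I would form the weighted sum $\sum_t\gamma_t(\cdot)$ of the two inequalities. The extrapolation cross-terms telescope precisely because $\theta_t\gamma_t=\gamma_{t-1}$ under the choices $\gamma_t=t+t_0+1$ and $\theta_t=\tfrac{t+t_0}{t+t_0+1}$, leaving only boundary terms of the form $\gamma_{T-1}\langle \hat h(z_{T-1})-\hat h(z_{T-2}),\lambda_T-\lambda\rangle$, which I would absorb using Young's inequality against a fraction of the dual distance terms and a fraction of $\tfrac{\eta_t}{2}\|z_{t+1}-z_t\|_2^2$ from the primal side; here $L_g=\sup_z\|\nabla_z g(z)\|_2$ enters as the Lipschitz modulus of $\tilde h$ needed to match these terms. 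The stepsize relation $\eta_t=\tfrac{\mu(t+t_0+1)}{2}$ with $t_0=\tfrac{2(\rho_f+B\rho_h)}{\mu}$ is chosen so that the primal distance terms $\gamma_t(\eta_t-\mu)\|z_t-z^*\|_2^2-\gamma_t\eta_t\|z_{t+1}-z^*\|_2^2$ telescope after using $\gamma_t(\eta_t-\mu)\le\gamma_{t-1}\eta_{t-1}$, and the dual terms telescope with $\tau_t=\tfrac{4L_g^2}{\mu t}$. Dividing by $\Gamma_T=\Theta(T^2)$ and applying Jensen's inequality to the average $\bar z$ yields a master bound of the form $\mathcal{L}(\bar z,\lambda)-f(z^*)\le \tfrac{C_1}{T^2}+\tfrac{C_2\lambda^2}{T^2}+C_3\,e^{-N}$, uniformly over $\lambda\in[0,B]$.

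Finally I would extract the three statements: setting $\lambda=0$ gives the optimality-gap bound on $f(\bar z)-f(z^*)$; maximizing over $\lambda\in[0,B]$ (the supremum is attained at $\lambda=B$ after completing the square) converts the residual into $B\,[\tilde h(\bar z)]_+$ minus a quadratic, giving the constraint-violation bound; and the distance bound follows from $\mu$-strong convexity, $f(\bar z)-f(z^*)\ge\tfrac{\mu}{2}\|\bar z-z^*\|_2^2$ after accounting for the KKT condition at $z^*$, which explains why the $(T+t_0+1)^2/T^2$ prefactor multiplies the exponential term in the distance bound. The choice $B=\tfrac{D_f}{\delta}+1$ is justified by the earlier lemma guaranteeing an optimal dual $\lambda^*\le\tfrac{D_f}{\delta}<B$, so the projection onto $\Lambda=[0,B]$ never clips the relevant comparator. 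The main obstacle I anticipate is keeping the accelerated telescoping intact while simultaneously propagating the two bias sequences $e_t^h,e_t^\nabla$ through both the primal and dual recursions without losing the $1/T^2$ order — in particular ensuring that the dual-side bias, which is amplified by the weights $\gamma_t=\Theta(t)$ and by $\lambda_{t+1}$, remains controlled and collapses into a clean $\mathcal{O}(e^{-N})$ factor rather than degrading the iteration-complexity term; establishing the \emph{uniform} dual bound $B$ (rather than assuming it as in \cite{boob2019stochastic}) is the enabling technical ingredient.
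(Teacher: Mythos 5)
Your proposal follows essentially the same route as the paper's proof: a weighted primal-dual gap function controlled via the three-point lemma applied to both updates, telescoping of the extrapolation terms under $\gamma_{t+1}\theta_{t+1}=\gamma_t$, bias tracking of $\hat h$ and $\hat\nabla\tilde h$ through \Cref{lemma:pgd}, and extraction of the three bounds by choosing the dual comparator ($\lambda=0$, $\lambda=\lambda^*+1$, and $\lambda^*$ itself), enabled by the uniform dual bound $\lambda^*\le D_f/\delta$. The only cosmetic deviation is at the very end, where you obtain the distance bound from strong convexity of $\mathcal{L}(\cdot,\lambda^*)$ at the comparator rather than from the leftover negative quadratic term in the telescoped sum as the paper does; both work and yield the same order.
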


\begin{proof} 
We first define some notations that will be used later. Let $\hat{d}_t = (1 +\theta_t)\hat{h}(z_t) - \theta_t\hat{h}(z_{t-1})$, ${d}_t = (1+\theta_t)\tilde{h}(z_t) - \theta_t\tilde{h}(z_{t-1})$, and $\xi_t = \hat{h}(z_t) - \hat{h}(z_{t-1})$. Furthermore, we define the primal-dual gap function as
\begin{align*}
	Q(w, \tilde{w}) \coloneqq f(z) + \tilde{\lambda} \tilde{h}(z) - \left(f(\tilde{z})+\lambda \tilde{h}(\tilde{z})\right),
\end{align*} 
where $w = (z, \lambda)$, $\tilde{w} =(\tilde{z}, \tilde{\lambda})\in\mathcal{Z}\times{\Lambda}$ are primal-dual pairs. 

Consider the update of $\lambda$ in \cref{eq:lambdaupdate1}. Applying \Cref{lemma:threepoint} with $v = -{\hat{d}_t}/{\tau_t}$, $\mathcal{S}= \Lambda$, $\bar{x}= \lambda_{t+1}$, $x = \lambda_{t}$ and letting $\tilde{x} = \lambda$ be an arbitrary point inside $\Lambda$, we have
\begin{align}
	-(\lambda_{t+1} - \lambda)\hat{d}_t \le \frac{\tau_t}{2} \left((\lambda -\lambda_t)^2 - (\lambda_{t+1} -\lambda_t)^2 - (\lambda - \lambda_{t+1})^2\right). \label{eq:lambdaupdatehere}
\end{align}

Similarly, consider the update of $z$ in \cref{eq:zupdate1}. Applying \Cref{lemma:threepoint} with \[v =  \tfrac{1}{\eta_t}\left(\nabla f(z_t) + \lambda_{t+1}\hat{\nabla}\tilde{h}(z_t)\right)\coloneqq\tfrac{1}{\eta_t}\hat\nabla \mathcal{L}(z_t,\lambda_{t+1}),\] $\mathcal{S}= \mathcal{Z}$, $\bar{x}=z_{t+1}$, $x = z_{t}$ and let $\tilde{x} =z$ be an arbitrary point inside $\mathcal{Z}$, we obtain
\begin{align}
	\langle\hat{\nabla}_z\mathcal{L}(z_t, \lambda_{t+1}), z_{t+1} - z\rangle \le \frac{\eta_t}{2} \left((z -z_t)^2 - (z_{t+1} -z_t)^2 - (z - z_{t+1})^2\right). \label{eq:zupdatehere}
\end{align}
Recall that $f(z)$ and $\tilde{h}(z)$ are $\rho_f$- and $\rho_h$-gradient Lipschitz (see \Cref{ass:smoothness} and \Cref{lemma:lipschitzh}). This implies
\begin{align}
	\langle \nabla f(z_t) , z_{t+1} -z_t\rangle \ge f(z_{t+1}) - f(z_t) -\frac{\rho_f\|z_t -z_{t+1}\|_2^2}{2},\label{eq:lipschitzf}\\
	\langle \nabla \tilde{h}(z_t) , z_{t+1} -z_t\rangle \ge \tilde{h}(z_{t+1}) - \tilde{h}(z_t) -\frac{\rho_h\|z_t -z_{t+1}\|_2^2}{2}.\label{eq:lipschitzg}
\end{align}
Moreover,  \Cref{ass:strongassp} assumes $f (z)$ is a $\mu$-strongly convex function, which yields
\begin{align}
	\langle \nabla f (z_t) , z_{t} -z\rangle \ge f (z_{t}) - f (z) +\frac{\mu\|z -z_{t}\|_2^2}{2}.\label{eq:stronglyconvexfn} 
\end{align}
The convexity of $\tilde h(z)$ in \Cref{ass:strongassp} ensures that
\begin{align}
	\langle \nabla \tilde{h} (z_t) , z_{t} -z\rangle \ge \tilde{h} (z_{t}) - \tilde{h} (z) .\label{eq:stronglyconvexhn}
\end{align}
For the exact gradient of Lagrangian with respect to the primal variable, we have 
\begin{align}
	&\langle {\nabla}_z\mathcal{L} (z_t, \lambda_{t+1}), z_{t+1} - z\rangle \nonumber\\
	&\quad= \langle \nabla f  (z_t) +\lambda_{t+1} \nabla \tilde{h} (z_{t}), z_{t+1} - z \rangle \nonumber \nonumber\\
	&\quad=  \langle \nabla f  (z_t), z_{t+1} - z_t\rangle + \langle\nabla f  (z_t), z_t - z\rangle +\lambda_{t+1} \langle \nabla \tilde{h}  (z_t), z_{t+1} - z_t\rangle + \lambda_{t+1}\langle\nabla \tilde{h}  (z_t), z_t - z\rangle \nonumber \\
	&\quad\overset{(i)}\ge  f(z_{t+1}) - f (z) + \lambda_{t+1} (\tilde{h} (z_{t+1}) -\tilde{h} (z))  - \frac{\rho_f + \lambda_{t+1}\rho_h\|z_{t+1} - z_t\|_2^2}{2} + \frac{\mu\|z-z_t\|_2^2}{2},\label{eq:exactzupdatehere}
\end{align}
where $(i)$ follows from \cref{eq:lipschitzf,eq:lipschitzg,eq:stronglyconvexfn,eq:stronglyconvexhn}.

Combining \cref{eq:zupdatehere,eq:exactzupdatehere} yields
\begin{align}
	f (z_{t+1}) - f (z) &\le \langle {\nabla}_z\mathcal{L} (z_t, \lambda_{t+1}) -  \hat{\nabla}_z\mathcal{L} (z_t, \lambda_{t+1}), z_{t+1} - z\rangle + \lambda_{t+1}(\tilde{h} (z) - \tilde{h} (z_{t+1}))\nonumber\\
	&\qquad  + \frac{\eta_t- \mu}{2}\|z - z_t\|_2^2 - \frac{\eta_t- (\rho_f + \lambda_{t+1}\rho_h)}{2}\|z_{t+1} - z_t\|_2^2 - \frac{\eta_t}{ 2}\|z - z_{t+1}\|_2^2. \label{eq:fwrtzhere}
\end{align}

Recall the definition of $\xi_t = \hat{h} (z_t) - \hat{h} (z_{t-1})$. Substituting it into \cref{eq:lambdaupdatehere} yields
\begin{align} 
	0 \le& -(\lambda - \lambda_{t+1})\hat{h}  (z_{t+1}) -(\lambda_{t+1} -\lambda)\xi_{t+1} + \theta_t(\lambda_{t+1} -\lambda)\xi_{t}\nonumber\\ 
	&\quad + \frac{\tau_t}{2}\left((\lambda -\lambda_t)^2 - (\lambda_{t+1} -\lambda_t)^2 - (\lambda - \lambda_{t+1})^2\right).\label{eq:fwrtlambdahere}
\end{align}

Let $w= (z, \lambda)$ and $w_{t+1} = (z_{t+1}, \lambda_{t+1})$. By the definition of the primal-dual gap function, we have 
\begin{align}
	&Q(w_{t+1}, w)\nonumber \\
	&\quad= f (z_{t+1}) + \lambda \tilde{h} (z_{t+1}) -f (z) -\lambda_{t+1}\tilde{h} (z)\nonumber\\
	&\quad\overset{(i)}\le  \langle {\nabla}_z\mathcal{L} (z_t, \lambda_{t+1}) -  \hat{\nabla}_z\mathcal{L} (z_t, \lambda_{t+1}), z_{t+1} - z\rangle  + (\lambda - \lambda_{t+1})\tilde{h} (z_{t+1}) \nonumber\\
	&\qquad \qquad+ \frac{\eta_t- \mu}{2}\|z - z_t\|_2^2 - \frac{\eta_t- (\rho_f + \lambda_{t+1}\rho_h)}{2}\|z_{t+1} - z_t\|_2^2 - \frac{\eta_t}{ 2}\|z - z_{t+1}\|_2^2.\nonumber\\
	&\quad\overset{(ii)}\le  \langle {\nabla}_z\mathcal{L} (z_t, \lambda_{t+1}) -  \hat{\nabla}_z\mathcal{L} (z_t, \lambda_{t+1}), z_{t+1} - z\rangle +  (\lambda - \lambda_{t+1}) (\tilde{h} (z_{t+1}) - \hat{h} (z_{t+1}))  \nonumber \\
	&\qquad \qquad-(\lambda_{t+1} -\lambda)\xi_{t+1}+ \theta_t(\lambda_{t+1} -\lambda)\xi_{t} + \frac{\tau_t}{2}\left((\lambda -\lambda_t)^2 - (\lambda_{t+1} -\lambda_t)^2 - (\lambda - \lambda_{t+1})^2\right)\nonumber\\
	&\qquad\qquad\quad + \frac{\eta_t- \mu }{2}\|z - z_t\|_2^2 - \frac{\eta_t- (\rho_f + B\rho_h)}{2}\|z_{t+1} - z_t\|_2^2 - \frac{\eta_t}{ 2}\|z - z_{t+1}\|_2^2, \label{eq:gapboundhere}
\end{align}
where $(i)$ follows from \cref{eq:fwrtzhere} and $(ii)$ follows from \cref{eq:fwrtlambdahere} and $0\le \lambda_{t+1}\le B$.

Now we proceed to bound the term $|\tilde{h} (z_t) - \hat{h} (z_t)|$.
\begin{align}
	|\tilde{h} (z_t) - \hat{h} (z_t)| = |g(x_t, \tilde{y}^*_t) - g(x_t, \hat{y}^*_t)|\overset{(i)}\le 2L_g\|\tilde{y}^*_t - \hat{y}^*_t\|_2\overset{(ii)}\le L_gD_\mathcal{Z} \left(1 - \tfrac{\alpha}{\rho_g + 2\alpha}\right)^N,\label{eq:hgdboundhere}
\end{align}
where $(i)$ follows from \Cref{ass:smoothness}, $\mathcal{Z}$ is bounded set, and because we let $L_g:= \sup_{z}\|\nabla_z g(z)\|_2$, and $(ii)$ follows from  \Cref{lemma:pgd} and because $\|\hat{y}_0 - \tilde{y}^*(x_t)\|_2\le D_\mathcal{Z}$.

The following inequality follows immediately from \cref{eq:hgdboundhere} and the fact that $|\lambda - \lambda_{t+1}|\le B$:
\begin{align}
	(\lambda -\lambda_{t+1}) (\tilde{h} (z_t) - \hat{h} (z_t))\le |\lambda -\lambda_{t+1}||\tilde{h} (z_t) - \hat{h} (z_t)|\le L_gBD_\mathcal{Z} \left(1 - \tfrac{\alpha}{\rho_g + 2\alpha}\right)^N. \label{eq:hgdbound2here}
\end{align}
By the definitions of ${\nabla}_z\mathcal{L} (z_t, \lambda_{t+1})$ and $\hat{\nabla}_z\mathcal{L} (z_t, \lambda_{t+1})$, we have 
\begin{align}
	\|{\nabla}_z\mathcal{L} &(z_t, \lambda_{t+1}) -  \hat{\nabla}_z\mathcal{L} (z_t, \lambda_{t+1})\|_2 \nonumber \\
	&=\left\|\nabla f (z_t) + \lambda_{t+1}\nabla \tilde{h} (z_t) -\left(\nabla f(z_t) + \lambda_{t+1}\hat\nabla\tilde{h} (z_t)\right)\right\|_2\nonumber\\
	&= \lambda_{t+1}\left\| \nabla g (x_t, \tilde{y}^*_t) -\nabla g (x_t, \hat{y}_t^*)\right\|_2 \nonumber \\ &\overset{(i)}\le\lambda_{t+1}\rho_g\|\tilde{y}^*_t - \hat{y}^*_t\|_2\overset{(ii)}\le B\rho_gD_\mathcal{Z} \left(1 - \tfrac{\alpha}{\rho_g + 2\alpha}\right)^N,\label{eq:lambdagdboundhere}
\end{align}
where $(i)$ follows from \Cref{ass:smoothness} and  $(ii)$ follows from \Cref{lemma:pgd}, and because $\lambda_{t+1}\le B$ and $\|\hat{y}_0 - \tilde{y}^*(x_t)\|_2\le D_\mathcal{Z}$.

By Cauchy-Schwartz inequality and \cref{eq:lambdagdboundhere}, we have
\begin{align}
	&\langle {\nabla}_z\mathcal{L} (z_t, \lambda_{t+1}) -  \hat{\nabla}_z\mathcal{L} (z_t, \lambda_{t+1}), z_{t+1} - z\rangle\nonumber\\ 
	&\qquad\le \|\nabla_z\mathcal{L} (z_t, \lambda_{t+1}) -  \hat{\nabla}_z\mathcal{L} (z_t, \lambda_{t+1})\|_2\|z_{t+1} - z\|_2\le B\rho_gD_\mathcal{Z}^2 \left(1 - \tfrac{\alpha}{\rho_g + 2\alpha}\right)^N.\label{eq:innerproducthere} 
\end{align}
By the definition of $\xi_t$, we have 
\begin{align}
	\theta_t(\lambda_{t+1} -\lambda_t)\xi_t &= \theta_t(\lambda_{t+1} -\lambda_t)(\hat{h} (z_t) - \hat{h} (z_{t-1}))\nonumber\\
	&= \theta_t(\lambda_{t+1} -\lambda_t)(\hat{h} (z_t) - \tilde{h} (z_t) - \hat{h} (z_{t-1}) + \tilde{h} (z_{t-1}) + \tilde{h} (z_t) - \tilde{h} (z_{t-1}))\nonumber\\
	&\le\theta_t |\lambda_{t+1} -\lambda_t|\left(|\hat{h} (z_t) - \tilde{h} (z_t)|+ |\hat{h} (z_{t-1}) - \tilde{h} (z_{t-1})| + |\tilde{h} (z_t) - \tilde{h} (z_{t-1})|\right)\nonumber\\
	&\overset{(i)}\le |\lambda_{t+1} -\lambda_t|\left(2 L_gD_\mathcal{Z} \left(1 - \tfrac{\alpha}{\rho_g + 2\alpha}\right)^N + L_g\|z_t - z_{t-1}\|_2\right)\nonumber\\
	&\overset{(ii)}\le 2BL_gD_\mathcal{Z} \left(1 - \tfrac{\alpha}{\rho_g + 2\alpha}\right)^N + L_g|\lambda_{t+1} -\lambda_t|\|z_t - z_{t-1}\|_2\nonumber\\
	&\overset{(iii)}\le 2BL_gD_\mathcal{Z} \left(1 - \tfrac{\alpha}{\rho_g + 2\alpha}\right)^N + \frac{\tau_t}{2} (\lambda_{t+1} -\lambda_t)^2 + \frac{L_g}{2\tau_t}\|z_t - z_{t-1}\|_2^2,\label{eq:boundsonxihere}
\end{align}
where $(i)$ follows from \cref{eq:hgdboundhere}, and because $\theta_t\le1$, and $\tilde{h} (z)$ is $L_g$ Lipschitz continuous, $(ii)$ follows because $0\le \lambda_t, \lambda_{t+1}\le B$, and $(iii)$ follows from Young's inequality. 

Substituting \cref{eq:boundsonxihere,eq:innerproducthere,eq:hgdbound2here} into \cref{eq:gapboundhere} yields
\begin{align}
	Q(w_{t+1}, w)&\le  -(\lambda_{t+1} -\lambda)\xi_{t+1} + 
	\theta_t(\lambda_{t} -\lambda)\xi_{t} +  (\rho_gD_\mathcal{Z} + 3L_g)BD_\mathcal{Z}\left(1 - \tfrac{\alpha}{\rho_g + 2\alpha}\right)^N \nonumber\\
	&\qquad + \frac{\tau_t}{2}\left((\lambda -\lambda_t)^2 - (\lambda - \lambda_{t+1})^2\right)+  \frac{\eta_t- \mu }{2}\|z - z_t\|_2^2  - \frac{\eta_t}{2}\|z - z_{t+1}\|_2^2\nonumber\\
	&\qquad\quad  +\frac{L_g^2}{2\tau_t}\|z_t - z_{t-1}\|_2^2 -\frac{\eta_t- (\rho_f + B\rho_h)}{ 2}\|z_t - z_{t+1}\|_2^2.\label{eq:finalQre}
\end{align} 
Recall that $\gamma_t$, $\theta_t$, $\eta_t$ and $\tau_t$ are set to satisfy $\gamma_{t+1}\theta_{t+1} =\gamma_t$, $\gamma_t\tau_t \ge \gamma_{t+1}\tau_{t+1}$, $\gamma_t\eta_t \ge \gamma_{t+1}(\eta_{t+1} -\mu)$, and 
\begin{align*}
	\gamma_t(\rho_f + B\rho_h -\eta_t) + \frac{2\gamma_{t+1}L_g^2}{\tau_{t+1}} \le 0.
\end{align*}
Multiplying $\gamma_t$ on both sides of \cref{eq:finalQre} and telescoping from $t=0, 1, \ldots T-1$ yield
\begin{align}
	\sum_{t=0}^{T-1} \gamma_t Q(w_{t+1}, w) \le &-\gamma_{T-1} (\lambda_T -\lambda) \xi_T + (\rho_gD_\mathcal{Z} + 3L_g)BD_\mathcal{Z}\left(1 - \tfrac{\alpha}{\rho_g + 2\alpha}\right)^N \sum_{t=0}^{T-1} \gamma_t\nonumber\\
	& \quad+\frac{\gamma_0\tau_0}{2} (\lambda -\lambda_0)^2 + \frac{\gamma_0(\eta_0 -\mu)}{2}\|z- z_0\|_2^2 \nonumber\\ 
	&\qquad - \frac{\gamma_{T-1}(\eta_{T-1} -(\rho_f + B\rho_h))}{2}\|z -z_T\|_2^2.\nonumber
\end{align}
Dividing both sides of the above inequality by $\Gamma_T = \sum_{t=0}^{T-1}\gamma_t$, we obtain
\begin{align}
	\frac{1}{\Gamma_T}\sum_{t=0}^{T-1} \gamma_t Q(w_{t+1}, w) \le& -\frac{\gamma_{T-1} (\lambda_T -\lambda) \xi_T}{\Gamma_T} + (\rho_gD_\mathcal{Z} + 3L_g)BD_\mathcal{Z}\left(1 - \tfrac{\alpha}{\rho_g + 2\alpha}\right)^N \nonumber\\
	&\quad +\frac{\gamma_0\tau_0}{2\Gamma_T} (\lambda -\lambda_0)^2 + \frac{\gamma_0(\eta_0 -\mu)}{2\Gamma_T}\|z- z_0\|_2^2 \nonumber\\
	&\qquad - \frac{\gamma_{T-1}(\eta_{T-1} -(\rho_f + B\rho_h))}{2\Gamma_T}\|z -z_T\|_2^2.
	\label{eq:telescopingre}
\end{align}
By following the steps similar to those in \cref{eq:boundsonxihere}, we have 
\begin{align*}
	|(\lambda_{T} - \lambda) \xi_{T}|  &\le |\lambda_{T} -\lambda|\left(2 L_gD_\mathcal{Z} \left(1 - \tfrac{\alpha}{\rho_g + 2\alpha}\right)^N + L_g\|z_T - z_{T-1}\|_2\right)\\
	&\le 2L_gBD_\mathcal{Z} \left(1 - \tfrac{\alpha}{\rho_g + 2\alpha}\right)^N   + L_gBD_\mathcal{Z}.
\end{align*}
Recall the definition: $\bar w \coloneqq \tfrac{1}{\Gamma_T}\sum_{t=0}^{T-1} \gamma_tw_{t+1}$. Noting that $Q(\cdot, w)$ is a convex function and substituting the above inequality into \cref{eq:telescopingre} yield
\begin{align}
	&Q(\bar{w}, w)\nonumber\\
	&\quad\le \frac{1}{\Gamma_T}\sum_{t=0}^{T-1} \gamma_t Q(w_{t+1}, w)\nonumber\\
	&\quad\le \frac{2L_gBD_\mathcal{Z}}{\Gamma_T} \left(1 - \tfrac{\alpha}{\rho_g + 2\alpha}\right)^N + \frac{L_gBD_\mathcal{Z}}{\Gamma_T} +  (\rho_gD_\mathcal{Z} + 3L_g)BD_\mathcal{Z}\left(1 - \tfrac{\alpha}{\rho_g + 2\alpha}\right)^N\nonumber\\
	& \qquad\quad +\frac{\gamma_0\tau_0}{2\Gamma_T} (\lambda -\lambda_0)^2 + \frac{\gamma_0(\eta_0 -\mu)}{2\Gamma_T}\|z- z_0\|_2^2  - \frac{\gamma_{T-1}(\eta_{T-1} -(\rho_f + B\rho_h))}{2\Gamma_T}\|z -z_T\|_2^2.\label{eq:finalQboundsre}
\end{align}
Let $w = (z^*, 0)$. Then, we have 
\begin{align}
	Q(\bar{w}, w) = f(\bar{z}) - f(z^*) - \bar{\lambda} \tilde{h}(z^*)\overset{(i)}\ge f(\bar{z}) - f(z^*),\nonumber
\end{align}
where $(i)$ follows from the fact $\tilde{h}(z^*)\le 0$ and $\bar{\lambda} = \frac{1}{\Gamma_T}\sum_{t=0}^{T-1} \gamma_t\lambda_{t+1} \ge 0$.

Substituting the above inequality into \cref{eq:finalQboundsre} yields
\begin{align}
	f(\bar{z}) - f( z^*)&\le  \frac{2L_gBD_\mathcal{Z}}{\Gamma_T}\left(1 - \tfrac{\alpha}{\rho_g + 2\alpha}\right)^N + \frac{L_gBD_\mathcal{Z}}{\Gamma_T}\nonumber\\
	&\qquad + (\rho_gD_\mathcal{Z} + 3L_g)BD_\mathcal{Z}\left(1 - \tfrac{\alpha}{\rho_g + 2\alpha}\right)^N + \frac{\gamma_0(\eta_0 -\mu)\|{z}^*- z_0\|_2^2}{2\Gamma_T}.\label{eq:optimalitygapre}
\end{align}

Recall that $({z}^*, {\lambda}^*)$ is a Nash equilibrium of $\mathcal{L}(z, \lambda)$ and it satisfies  ${\lambda}^*\tilde{h}({z}^*) = 0$. Then we have
\begin{align}
	\mathcal{L}(\bar z, \lambda^*) \ge \mathcal{L}({z}^*, \lambda^*) \overset{\mbox{by def.}}\Longleftrightarrow f(\bar z) + \lambda^* \tilde{h}(\bar{z}) - f({z}^*)\ge 0. \label{eq:final1re}
\end{align}
If $\tilde{h}(\bar{z})\le 0$, the constraint violation $[\tilde{h}(\bar{z})]_+ = 0$, which satisfies the statement in the theorem. If $\tilde{h}(\bar z)>0$, let $w= ({z}^*, \lambda^*+1)$. Then, we have
\begin{align}
	Q(\bar w, w) = f(\bar{z}) +  (\lambda^*+1)\tilde{h}(\bar{z}) - f(z^*) - \bar\lambda \tilde{h}(z^*)\overset{(i)}\le f(\bar z) + (\lambda^* + 1)\tilde{h}(\bar z) - f(z^*),\label{eq:final2re}
\end{align}
where $(i)$ follows from the facts $\tilde{h}(z^*)\le 0$ and $\bar \lambda\ge 0$.

 \Cref{eq:final1re,eq:final2re,eq:finalQboundsre} and the condition $\tilde{h}(\bar{z})>0$ together yield,
\begin{align}
	[\tilde{h}(\bar{z})]_+ =& \tilde{h}(\bar{z}) = Q(\bar w, w) - (f(\bar w) + \lambda^* \tilde{h}(\bar{z}) - f({z}^*))\le  Q(\bar{w}, w)\nonumber\\
	\le & \frac{2L_gBD_\mathcal{Z}}{\Gamma_T} \left(1 - \tfrac{\alpha}{\rho_g + 2\alpha}\right)^N + \frac{L_gBD_\mathcal{Z}}{\Gamma_T} +  (\rho_gD_\mathcal{Z} + 3L_g)BD_\mathcal{Z}\left(1 - \tfrac{\alpha}{\rho_g + 2\alpha}\right)^N\nonumber\\
	&  +\frac{\gamma_0\tau_0}{2\Gamma_T} (\lambda^*+1)^2 + \frac{\gamma_0(\eta_0 -\mu)}{2\Gamma_T}\|z^*\|_2^2.\label{eq:constraintviolationre}
\end{align}

Finally, taking $w^*=({z}^*, {\lambda}^*)$ in \cref{eq:finalQboundsre}, noticing the fact $Q(w, w^*)\ge0$ for all $w$, and rearranging the terms, we have 
\begin{align}
	\|\bar{z} - z^*\|_2^2 &\le \frac{1}{\gamma_{T-1}(\eta_{T-1} -(\rho_f + B\rho_h))}\left(4L_gBD_\mathcal{Z}\left(1 - \tfrac{\alpha}{\rho_g + 2\alpha}\right)^N + 2L_gBD_\mathcal{Z} \right)\nonumber\\
	& \qquad +\frac{1}{\gamma_{T-1}(\eta_{T-1} -(\rho_f + B\rho_h))}\left({\gamma_0\tau_0} ({\lambda}^* -\lambda_0)^2 + {\gamma_0(\eta_0 -\mu)}\|{z}^*- z_0\|_2^2\right)\nonumber\\
	&\quad\qquad+  \frac{\Gamma_T}{\gamma_{T-1}(\eta_{T-1} -(\rho_f + B\rho_h))}(\rho_gD_\mathcal{Z} + 3L_g)BD_\mathcal{Z}\left(1 - \tfrac{\alpha}{\rho_g + 2\alpha}\right)^N.\label{eq:distanceboundre}
\end{align}
Moreover, using the fact that $\Gamma_T\ge \tfrac{T^2}{2}$ and $\Gamma_T\ge 2$, \cref{eq:optimalitygapre} yields
\begin{align*} 
	f(\bar{z}) - f( z^*)&\le   \frac{2L_gBD_\mathcal{Z}}{T^2} + \frac{\gamma_0(\eta_0 -\mu)\|{z}^*- z_0\|_2^2}{T^2}+ (\rho_gD_\mathcal{Z} + 4L_g)BD_\mathcal{Z} \left(1 - \tfrac{\alpha}{\rho_g + 2\alpha}\right)^N .
\end{align*}
\Cref{eq:constraintviolationre} together with the facts that $\Gamma_T\ge 2$,$\Gamma_T\ge T^2/2$, $\|z^*\|_2\le D_\mathcal{Z}$, and $\lambda^*+1\le B$, implies
\begin{align*}
    	[\tilde{h}(\bar{z})]_+ \le \frac{2L_gBD_\mathcal{Z}+ \gamma_0\tau_0 B^2+\gamma_0(\eta_0 -\mu){D}_\mathcal{Z}^2}{T^2} +  (\rho_gD_\mathcal{Z} + 4L_g)BD_\mathcal{Z}\left(1 - \tfrac{\alpha}{\rho_g + 2\alpha}\right)^N.
\end{align*}
Using the fact that $\gamma_{T-1}(\eta_{T-1} - \rho_f -B\rho_h)\ge \tfrac{\mu T^2}{2}$ and $\Gamma_T\le \mu (T+t_0 + 2)^2$, \cref{eq:distanceboundre} yields
\begin{align*}
    \|\bar{z} - z^*\|_2^2 \le \frac{2{\gamma_0\tau_0} B^2 + 2{\gamma_0(\eta_0 -\mu)}D_\mathcal{Z}^2}{\mu T^2} +  \frac{2(T+t_0+1)^2}{T^2}(\rho_gD_\mathcal{Z} + 4L_g)BD_\mathcal{Z}\left(1 - \tfrac{\alpha}{\rho_g + 2\alpha}\right)^N.
\end{align*}
\end{proof}

\section{Proof of \Cref{lemma:lipschizh} in \Cref{sec:proximalpdbo}}

Recall that we have already shown that $\|\nabla^2 \tilde{g}^*(z)\|_2\le \rho_g + \tfrac{\rho_g^2}{\alpha}$ in \cref{eq:nablagstar2} for all $z\in\mathcal{Z}$. Then, the following inequality holds for all $x, x^\prime\in\mathcal{X}$
\begin{align}
     \tilde g^*(x^\prime)\le \tilde g^*(x) + \langle\nabla_x \tilde g^*(x), x^\prime - x\rangle + \tfrac{\rho_g + \rho_g^2/\alpha}{2} \|x^\prime -x \|_2^2.\label{eq:middle62}
\end{align}
Recall the inequality in \Cref{ass:smoothness}:
\begin{align}
     g(x^\prime, y^\prime)\ge g(x, y) + \langle\nabla_x g(x, y), x^\prime-x\rangle + \langle \nabla_y g(x, y), y^\prime -y\rangle- \tfrac{\rho_g}{2}\|x - x^\prime\|_2^2.\label{eq:middle63}
\end{align}
Consider $\tilde{h}_k(z)$. We have $\nabla_z \tilde{h}_k(z) = \nabla_z g(z) - (\nabla_x \tilde g^*(x); \mathbf{0}_d) + (2\rho_g + \tfrac{\rho_g^2}{\alpha})(x - \tilde{x}_{k-1};\mathbf{0}_d)$. Then, we have 
\begin{align}
    &\tilde h_k(z) + \langle\nabla_z \tilde{h}_k(z), z^\prime - z\rangle\nonumber\\
    &\quad= g(x, y) - \tilde{g}^*(x) + \tfrac{(2\rho_g + \tfrac{\rho_g^2}{\alpha})}{2}\|x - \tilde{x}_{k-1}\|_2^2- \delta + \langle \nabla_x g(x, y), x^\prime - x\rangle -\langle \nabla_x \tilde g^*(x), x^\prime - x\rangle\nonumber\\
    &\qquad\quad+\langle \nabla_y \tilde g(x, y), y^\prime - y\rangle + (2\rho_g + \tfrac{\rho_g^2}{\alpha})\langle x - \tilde{x}_{k-1},  x^\prime - x\rangle .\label{eq:middle64}
\end{align}
Substituting \cref{eq:middle62,eq:middle63} into \cref{eq:middle64}, we obtain
\begin{align*} 
    \tilde h_k(z) + \langle\nabla_z \tilde{h}_k(z), z^\prime - z\rangle \le g(x^\prime, y^\prime) -\tilde g(x^\prime) + \tfrac{2\rho_g + \rho_g^2/\alpha}{2}\|x^\prime - \tilde x_{k-1}\|_2^2 - \delta= \tilde h_k(z^\prime).
\end{align*}
The above inequality is the necessary and sufficient condition for convexity, which completes the proof.

\section{Proof of \Cref{thm:maintheorem}}\label{sec:proofofproximalpdo}

We first formally restate the theorem with the full details.
\begin{theorem}[Formal Statement of \Cref{thm:maintheorem}]
	Suppose that \Cref{ass:smoothness} holds.  Consider \Cref{alg:proximalmethod}. Let the hyperparameters $B>0$ be a large enough constant, $\gamma_t = t+t_0+1$, $\eta_t = \frac{\rho_f(t+t_0+1)}{2}$, $\tau_t = \frac{4L_g^2}{\rho_f t}$, $\theta_t = \frac{t+t_0}{t+t_0+1}$, where $t_0 = \frac{6\rho_f+ 4B\rho_h}{\rho_f}$, $\rho_h$ is specified in \Cref{lemma:lipschitzh}, and $L_g= \sup_{z\in\mathcal{Z}}\|\nabla g(z)\|_2$. Set $B = \tfrac{D_f+\rho_fD_\mathcal{Z}^2}{\delta}+1$, where $D_f = \sup_{z, z^\prime\in\mathcal{Z}}|f(z) - f(z^\prime)|$. Then, the output $\tilde{z}_{\hat{k}}$ of \Cref{alg:proximalmethod} with a randomly chosen index $\hat k$ is a stochastic $\epsilon$-KKT point of \cref{eq:reformulation}, where $\epsilon$ is given by
	$\epsilon=\mathcal{O}\left(\tfrac{1}{K}\right)+\mathcal{O}\left(\tfrac{1}{T^2}\right)+ \mathcal{O}\left(e^{-N}\right)$.
\end{theorem}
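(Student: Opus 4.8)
The plan is to reduce the analysis of the outer (proximal) loop to the per-subproblem guarantee already established in \Cref{thm:pdboconvergence}, and then to control how the inexact subproblem solutions accumulate into an $\epsilon$-KKT certificate for the original problem in \cref{eq:reformulation}. First I would verify that each subproblem $(\mathrm{P}_k)$ fits the hypotheses of \Cref{thm:pdboconvergence}: the regularized objective $f_k$ is $\rho_f$-strongly convex because $f$ is $\rho_f$-gradient Lipschitz, and the regularized constraint $\tilde h_k$ is convex by \Cref{lemma:lipschizh}, so \Cref{ass:strongassp} holds for $(\mathrm{P}_k)$. Hence the inner PDBO subroutine produces $\tilde z_k$ that is within $\mathcal{O}(1/T^2)+\mathcal{O}(e^{-N})$ of the unique minimizer $z_k^*$ of $(\mathrm{P}_k)$ simultaneously in objective gap, constraint violation $[\tilde h_k(\tilde z_k)]_+$, and squared distance $\|\tilde z_k - z_k^*\|_2^2$.

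The key intermediate step, and the one I expect to require the most care, is a \emph{uniform} bound on the optimal dual multipliers $\lambda_k^*$ of all subproblems. The argument of the dual-bound lemma proved earlier shows $\lambda_k^*\le D_{f_k}/\delta$ provided each $(\mathrm{P}_k)$ admits a strictly feasible point with margin $\delta$. The crucial observation is that the point $\hat z_k=(\tilde x_{k-1},\tilde y^*(\tilde x_{k-1}))$ satisfies $\tilde h_k(\hat z_k)=\tilde h(\hat z_k)+\rho\|\tilde x_{k-1}-\tilde x_{k-1}\|_2^2=-\delta$, so the $\delta$-margin persists across every subproblem regardless of the moving proximal center. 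Since $D_{f_k}\le D_f+\rho_f D_{\mathcal{Z}}^2$, this yields $\lambda_k^*\le (D_f+\rho_f D_{\mathcal{Z}}^2)/\delta<B$ for the choice of $B$ in the theorem, so the projection interval $\Lambda=[0,B]$ never truncates the optimal dual variable and \Cref{thm:pdboconvergence} is genuinely applicable to each round.

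Next I would run the standard proximal-point descent. Because $\{z:\tilde h_k(z)\le 0\}\subseteq\{z:\tilde h(z)\le 0\}$, every $z_k^*$ is feasible for the original problem, which handles the feasibility requirement of \Cref{def:KKT}. Using $f$ as a potential and comparing $f_k(z_k^*)$ against $f_k$ evaluated at the (approximately feasible) previous iterate $\tilde z_{k-1}$, the $\rho_f\|z-\tilde z_{k-1}\|_2^2$ term telescopes to give
\[
	\tfrac{1}{K}\sum_{k=1}^K \|z_k^*-\tilde z_{k-1}\|_2^2 \le \mathcal{O}(1/K)+\mathcal{O}(1/T^2)+\mathcal{O}(e^{-N}),
\]
where the last two terms carry the per-subproblem inexactness from the first step through the $K$ outer rounds. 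Showing these errors do not blow up, in particular converting the residual infeasibility of $\tilde z_{k-1}$ for $(\mathrm{P}_k)$ into a controlled objective perturbation via the bound $\lambda_k^*\le B$, is the delicate bookkeeping here.

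Finally I would convert small average movement into the $\epsilon$-KKT guarantee. Writing the exact KKT system of $(\mathrm{P}_k)$ at $(z_k^*,\lambda_k^*)$, the only discrepancy from the KKT system of the original problem is the proximal gradient terms $2\rho_f(z_k^*-\tilde z_{k-1})$ and $2\lambda_k^*\rho\,(x_k^*-\tilde x_{k-1};\mathbf{0}_d)$, whose norms are controlled by $\|z_k^*-\tilde z_{k-1}\|_2$ and $\lambda_k^*\le B$; similarly $\lambda_k^*\tilde h_k(z_k^*)=0$ gives $|\lambda_k^*\tilde h(z_k^*)|=\lambda_k^*\rho\|x_k^*-\tilde x_{k-1}\|_2^2\le B\rho\|z_k^*-\tilde z_{k-1}\|_2^2$, matching the complementary-slackness requirement. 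Taking $\hat k$ uniform over $\{1,\dots,K\}$ turns the averaged movement bound into an in-expectation bound on each $\epsilon$-KKT quantity, and replacing $z_k^*$ by the computed output $\tilde z_{\hat k}$ costs only a further $\mathcal{O}(1/T^2)+\mathcal{O}(e^{-N})$ via the first step, producing the claimed $\epsilon=\mathcal{O}(1/K)+\mathcal{O}(1/T^2)+\mathcal{O}(e^{-N})$.
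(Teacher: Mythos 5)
Your proposal follows essentially the same route as the paper: verify that each subproblem $(\mathrm{P}_k)$ satisfies \Cref{ass:strongassp} so that \Cref{thm:pdboconvergence} yields the $\mathcal{O}(1/T^2)+\mathcal{O}(e^{-N})$ accuracy per round, establish the uniform dual bound $\lambda_k^*\le (D_f+\rho_f D_{\mathcal{Z}}^2)/\delta$ via the strictly feasible point $(\tilde x_{k-1},\tilde y^*(\tilde x_{k-1}))$ with margin $-\delta$ (this is exactly the paper's \Cref{lemma:uniformboundre}), and then run the outer proximal-loop telescoping to extract the $\epsilon$-KKT certificate. The only difference is that the paper imports this last step wholesale as \Cref{thm:outerloop} (Theorem 3.17 of \cite{boob2019stochastic}), whereas you sketch its proof directly; your sketch is consistent with how that result is established.
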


Central to the proof of \Cref{thm:maintheorem}, we first prove the uniform bound of the optimal dual variables as stated in the following lemma.
\begin{lemma}\label{lemma:uniformboundre}
For each subproblem $(\mbox{P}_k)$, there exists a unique global optimizer ${z}_k^*$ and optimal dual variable ${\lambda}_k^*$ such that ${\lambda}_k^* \le \bar{B} \coloneqq  (D_f + \rho_fD_\mathcal{Z}^2)/\delta$, where $D_f\coloneqq\sup_{z, z^\prime\in\mathcal{Z}} |f(z) - f(z^\prime)|$.
\end{lemma}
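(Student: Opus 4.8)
The plan is to recognize $(\mathrm{P}_k)$ as a strongly convex program and then adapt the dual-function argument already used for the earlier uniform bound $\lambda^*\le D_f/\delta$ in this section. First I would record the structural facts: by the discussion preceding \Cref{lemma:lipschizh}, the regularized objective $f_k(z)=f(z)+\rho_f\|z-\tilde{z}_{k-1}\|_2^2$ is $\rho_f$-strongly convex, since the $2\rho_f\mathbf{I}$ curvature of the proximal term dominates the $-\rho_f\mathbf{I}$ lower bound on $\nabla^2 f$ coming from \Cref{ass:smoothness}; and by \Cref{lemma:lipschizh} the constraint $\tilde{h}_k$ is convex. Since $\mathcal{Z}$ is compact and convex and the sublevel set $\{z:\tilde{h}_k(z)\le 0\}$ is closed and convex, the feasible region is a nonempty compact convex set, so the continuous strongly convex objective attains a minimizer, and strong convexity forces it to be unique. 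This yields the unique global optimizer $z_k^*$.

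The crux for a \emph{uniform} bound is to exhibit a strictly feasible point whose feasibility margin does not depend on $k$. The key idea is to center the candidate at the proximal center: take $x_0=\tilde{x}_{k-1}$ and $y_0=\argmin_{y\in\mathcal{Y}}\tilde{g}(\tilde{x}_{k-1},y)$, and set $z_0=(x_0,y_0)$. Because $x_0=\tilde{x}_{k-1}$, the proximal penalty $\rho\|x_0-\tilde{x}_{k-1}\|_2^2$ vanishes, so $\tilde{h}_k(z_0)=\tilde{h}(z_0)=g(x_0,y_0)-\tilde{g}^*(x_0)-\delta\le -\delta$, exactly mirroring the earlier feasibility computation. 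Thus $z_0$ is strictly feasible for $(\mathrm{P}_k)$ with margin $\delta$ for every $k$, which verifies Slater's condition and, together with convexity, guarantees the existence of an optimal KKT multiplier $\lambda_k^*\ge 0$ obeying complementary slackness $\lambda_k^*\tilde{h}_k(z_k^*)=0$ and strong duality.

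With strong duality in hand, I would bound $\lambda_k^*$ through the dual function $d_k(\lambda)=\min_{z\in\mathcal{Z}}\mathcal{L}_k(z,\lambda)$. Evaluating at $z_0$ gives $d_k(\lambda_k^*)\le f_k(z_0)+\lambda_k^*\tilde{h}_k(z_0)\le f_k(z_0)-\delta\lambda_k^*$, while strong duality with complementary slackness gives $d_k(\lambda_k^*)=f_k(z_k^*)$. Rearranging yields $\delta\lambda_k^*\le f_k(z_0)-f_k(z_k^*)$, and I would finish by bounding the right-hand side termwise: $f(z_0)-f(z_k^*)\le D_f$, and $\rho_f\big(\|z_0-\tilde{z}_{k-1}\|_2^2-\|z_k^*-\tilde{z}_{k-1}\|_2^2\big)\le \rho_f D_\mathcal{Z}^2$ since both iterates lie in $\mathcal{Z}$ and the subtracted term only helps. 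This delivers $\lambda_k^*\le (D_f+\rho_f D_\mathcal{Z}^2)/\delta=\bar{B}$.

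The main obstacle, and the only place the argument genuinely departs from the earlier lemma, is ensuring the feasibility margin $\delta$ is uniform across all subproblems despite the $k$-dependent proximal penalty in the constraint; the choice $x_0=\tilde{x}_{k-1}$ is precisely what annihilates the penalty and keeps $\bar{B}$ independent of $k$. A secondary point worth verifying is that the prescribed projection radius $B=\bar{B}+1$ in the formal theorem dominates every $\lambda_k^*$, so that the per-subproblem guarantee \Cref{thm:pdboconvergencere} may be invoked with a valid uniform $B$; since $\lambda_k^*\le\bar{B}<B$, no dual iterate is clipped by the projection onto $\Lambda=[0,B]$.
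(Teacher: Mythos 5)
Your proposal is correct and follows essentially the same route as the paper's proof: the same strictly feasible point $(\tilde{x}_{k-1},\argmin_{y\in\mathcal{Y}}\tilde{g}(\tilde{x}_{k-1},y))$ giving a $k$-independent margin $\tilde{h}_k=-\delta$, followed by the same dual-function evaluation $d_k(\lambda_k^*)\le f_k(\bar z_{k-1})-\delta\lambda_k^*$ combined with strong duality to get $\delta\lambda_k^*\le f_k(\bar z_{k-1})-f_k(z_k^*)\le D_f+\rho_f D_\mathcal{Z}^2$. The extra detail you supply on existence/uniqueness of $z_k^*$ and the termwise bound on $f_k(z_0)-f_k(z_k^*)$ merely fills in steps the paper leaves implicit.
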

\begin{proof}[Proof of \Cref{lemma:uniformboundre}]
For each subproblem $(\mbox{P}_k)$, let $\bar{z}_{k-1} =(\tilde{x}_{k-1}, \bar y_{k-1})$ with  $\bar{y}_{k-1} =\argmin_{\mathcal{Y}} g(\tilde{x}_{k-1}, y) + \frac{\alpha\|y\|_2^2}{2}$. Then, we have $\tilde{h}_k(\bar{z}_{k-1}) = -\delta$, which is a strictly feasible point. 

Define the function $d_k(\lambda) = \min_{z\in\mathcal{Z}} \mathcal{L}_k(z, \lambda)$. Then, for any $\lambda$ and $z\in\mathcal{Z}$, we have
\begin{align}\label{eq:dualfunctionboundre}
	d_k(\lambda) \le  f_k(\bar{z}_{k-1}) + \lambda\tilde h_k(\bar{z}_{k-1}) = f_k(\bar{z}_{k-1})  -\delta\lambda.
\end{align}
Moreover, it is known that constrained optimization with strongly convex objective and strongly convex constraints has no duality gap. Combining this with the fact that the strictly feasible point exists, we conclude that the optimal dual variable exists in $\mathbb{R}_+$. Taking $\lambda=\lambda^*_k$ in \cref{eq:dualfunctionboundre} and using the fact that $|d_k(\lambda^*)- f_k(\bar{z}_{k-1})| = | f_{k}(z_k^*)- f_k(\bar{z}_{k-1})|\le D_f + \rho_fD_\mathcal{Z}^2$, we complete the proof.
\end{proof}


To proceed the proof of \Cref{thm:maintheorem}, the function $f_k(z)$ in the subproblem $(\mathrm{P}_k)$ is a $\mu=\rho_f$ strongly convex and $3\rho_f$ Lipschitz continuous function, and $h_k(z)$ is a convex function and $2\rho_h$ Lipschitz continuous function.  Thus, as we state in the theorem, let $\gamma_t$, $\tau_t$, $\theta_t$ and $\eta_t$ be the same as those in \Cref{thm:pdboconvergencere} with $\mu=\rho_f$, $\rho_f$ being replaced by $3\rho_f$, and $\rho_h$ being replaced by $2\rho_h$. We then follow steps similar to those in the proof of \Cref{thm:pdboconvergencere}. In particular, note that \Cref{lemma:uniformboundre} indicates that for each subproblem $(\mathrm{P}_k)$, the optimal dual variable exists and is bounded by $\bar{B} = (D_f+\rho_fD_\mathcal{Z}^2)/\delta$. Thus, setting $\Lambda= [0, B]$ with $B= \bar{B} + 1$ ensures both $\lambda^*_k$ and $\lambda^*_k+1$ to be inside the set $\Lambda$, which further ensures that we can follow steps similar to  \cref{eq:final2re,eq:distanceboundre}. We then obtain for all $k\in\mathbb{N}$, the following bounds hold:
\begin{align*} 
	f_k(\tilde{z_k}) - f_k(z_k^*)&\le  \mathcal{O}\left(\frac{1}{T^2}\right)+ \mathcal{O}\big(e^{-N}\big),
\end{align*}
\begin{align*}
    	[\tilde{h}_k(\tilde{z}_k)]_+ \le  \mathcal{O}\left(\frac{1}{T^2}\right)+ \mathcal{O}\big(e^{-N}\big),
\end{align*}
and
\begin{align*}
    \|\tilde{z}_k - z_k^*\|_2^2 \le  \mathcal{O}\left(\frac{1}{T^2}\right)+ \mathcal{O}\big(e^{-N}\big).
\end{align*}

With the above convergence rate bounds, we apply the following result on the convergence of the nonconvex constrained problem.
\begin{lemma}[Theorem 3.17 \cite{boob2019stochastic}]\label{thm:outerloop}
Suppose \Cref{ass:smoothness} hold. Denote the global optimizer of $(\mbox{P}_k)$ as $z_k^*$.  Suppose the optimal dual variable has an upper bound $\bar B$, each subproblem is solved to $\Delta$-accuracy, i.e., the optimality gap $f_k(\tilde{z}_k) - f_k( z_k^*)\le \Delta$, constraint violation $[\tilde{h}_k(\tilde{z}_k)]_+\le \Delta$, and distance to the solution $\|\tilde{z}_k - {z}_k^*\|_2^2\le \Delta$. Then, the final output with a randomly chosen index is an stochastic $\epsilon$-KKT point of \cref{eq:reform3}, with $\epsilon=\frac{D_f+\rho_fD_\mathcal{Z}^2 + \bar{B}V_0}{K} + \tfrac{16(\bar{B}+1)}{\rho_f} \Delta$, where $D_f=\sup_{z, z^\prime\in\mathcal{Z}} |f(z) -f(z^\prime)|$, $V_0= \max\{\tilde{h}(\tilde{z}_0), 0\}$. 
\end{lemma}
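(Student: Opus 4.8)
The plan is to exploit the fact that each subproblem $(\mathrm{P}_k)$ has a $\rho_f$-strongly convex objective $f_k$ and a convex constraint $\tilde h_k$ (by \Cref{lemma:lipschizh}), so that it possesses a unique exact primal-dual solution $(z_k^*,\lambda_k^*)$ with $\lambda_k^*\le\bar B$ by \Cref{lemma:uniformboundre}. I would then use the freedom built into the $\epsilon$-KKT definition (\Cref{def:KKT}), which allows the optimality conditions to be certified at an auxiliary point $z$ near the output $\hat z$: I take $\hat z=\tilde z_{\hat k}$ and certify with $z=z_{\hat k}^*$ and $\lambda=\lambda_{\hat k}^*$. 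The distance requirement is then immediate from the $\Delta$-accuracy, since $\|z_{\hat k}^*-\tilde z_{\hat k}\|_2^2\le\Delta$, and feasibility $\tilde h(z_{\hat k}^*)\le 0$ follows because $\tilde h_k(z_k^*)\le 0$ and the proximal term $\rho\|x_k^*-\tilde x_{k-1}\|_2^2$ is nonnegative.

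The second step translates the exact KKT conditions of $(\mathrm{P}_k)$ into residuals for the original problem in \cref{eq:reform3}. Writing $\nabla f_k(z_k^*)=\nabla f(z_k^*)+2\rho_f(z_k^*-\tilde z_{k-1})$ and $\nabla\tilde h_k(z_k^*)=\nabla\tilde h(z_k^*)+2\rho(x_k^*-\tilde x_{k-1};\mathbf{0}_d)$, the subproblem stationarity $\nabla f_k(z_k^*)+\lambda_k^*\nabla\tilde h_k(z_k^*)\in-\mathcal N(z_k^*;\mathcal Z)$ gives
\[
\mathrm{dist}\big(\nabla f(z_k^*)+\lambda_k^*\nabla\tilde h(z_k^*),\,-\mathcal N(z_k^*;\mathcal Z)\big)\le 2(\rho_f+\bar B\rho)\,\|z_k^*-\tilde z_{k-1}\|_2,
\]
while the subproblem complementary slackness $\lambda_k^*\tilde h_k(z_k^*)=0$ yields $|\lambda_k^*\tilde h(z_k^*)|=\lambda_k^*\rho\|x_k^*-\tilde x_{k-1}\|_2^2\le\bar B\rho\|z_k^*-\tilde z_{k-1}\|_2^2$. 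Thus every KKT residual is controlled by the proximal movement $\|z_k^*-\tilde z_{k-1}\|_2$, and after averaging over the uniform index $\hat k$ it suffices to bound $\tfrac1K\sum_{k=1}^K\|z_k^*-\tilde z_{k-1}\|_2^2$ (and, via Jensen's inequality, its square root).

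The crux is a descent/telescoping argument for this average movement. Using the $\rho_f$-strong convexity of $f_k$ together with the variational inequality coming from optimality of $z_k^*$ over the feasible set of $(\mathrm{P}_k)$, I would show $f_k(\tilde z_{k-1})\ge f_k(z_k^*)+\tfrac{\rho_f}{2}\|\tilde z_{k-1}-z_k^*\|_2^2$ up to a slack term controlled by $\bar B$ times the residual infeasibility of $\tilde z_{k-1}$ (at most $\Delta$ from the previous subproblem, and at most $V_0$ at $k=1$). Since the proximal term vanishes at its own center, $f_k(\tilde z_{k-1})=f(\tilde z_{k-1})$ and $f_k(z_k^*)=f(z_k^*)+\rho_f\|z_k^*-\tilde z_{k-1}\|_2^2$, so this reads as a decrease of $f$ of order $\rho_f\|z_k^*-\tilde z_{k-1}\|_2^2$ per step. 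Chaining $f(z_k^*)$ to $f(\tilde z_k)$ through the $\Delta$-optimality gap and the distance bound, then telescoping over $k=1,\dots,K$, collapses the sum to $f(\tilde z_0)-\inf_{\mathcal Z}f$ plus a boundary proximal term of order $\rho_f D_\mathcal Z^2$, giving at most $D_f+\rho_f D_\mathcal Z^2$, together with accumulated errors of order $\bar B V_0$ (initial infeasibility) and $K\cdot(\bar B+1)\Delta$. Dividing by $\rho_f K$ yields $\tfrac1K\sum_k\|z_k^*-\tilde z_{k-1}\|_2^2=O\big(\tfrac{D_f+\rho_f D_\mathcal Z^2+\bar B V_0}{\rho_f K}\big)+O\big(\tfrac{(\bar B+1)\Delta}{\rho_f}\big)$, and substituting into the residual bounds of the previous step produces the claimed $\epsilon=\tfrac{D_f+\rho_f D_\mathcal Z^2+\bar B V_0}{K}+\tfrac{16(\bar B+1)}{\rho_f}\Delta$.

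The main obstacle I expect is the inexactness bookkeeping in the telescoping step. Two issues require care: first, $\tilde z_{k-1}$ is feasible for $(\mathrm{P}_k)$ only up to $\Delta$, and this violation enters the descent inequality multiplied by the dual bound $\bar B$, producing the $(\bar B+1)\Delta$ per-step error; second, relating $f(z_k^*)$ to $f(\tilde z_k)$ to chain successive steps requires simultaneously invoking the optimality gap $f_k(\tilde z_k)-f_k(z_k^*)\le\Delta$ and the distance bound $\|\tilde z_k-z_k^*\|_2^2\le\Delta$, while also controlling the sign and magnitude of the proximal term $\rho_f\|\tilde z_k-\tilde z_{k-1}\|_2^2$. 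Isolating the initial-infeasibility contribution $\bar B V_0/K$ and pinning down the precise constant $16(\bar B+1)/\rho_f$ is the delicate part; the remaining estimates reduce to the gradient-Lipschitz bounds of \Cref{ass:smoothness} and \Cref{lemma:lipschitzh} applied on the bounded domain $\mathcal Z$.
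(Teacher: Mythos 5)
The paper itself contains no proof of this lemma: it is imported wholesale as Theorem~3.17 of \cite{boob2019stochastic}, and the surrounding text only verifies its hypotheses (the dual bound $\bar B$ via \Cref{lemma:uniformboundre} and the per-subproblem $\Delta$-accuracy via \Cref{thm:pdboconvergence}). Your reconstruction follows the same inexact proximal-point argument as the cited source, and its skeleton is sound: certify the stochastic $\epsilon$-KKT conditions of \Cref{def:KKT} at the exact subproblem solution $(z_{\hat k}^*,\lambda_{\hat k}^*)$; note $\tilde h(z_k^*)\le\tilde h_k(z_k^*)\le 0$ and, when $\tilde h_k(z_k^*)=0$, $|\lambda_k^*\tilde h(z_k^*)|=\lambda_k^*\rho\|x_k^*-\tilde x_{k-1}\|_2^2$; reduce the stationarity residual to the proximal movement through $\nabla f_k(z_k^*)+\lambda_k^*\nabla\tilde h_k(z_k^*)\in-\mathcal N(z_k^*;\mathcal Z)$; and bound the average squared movement by telescoping the Lagrangian descent inequality $f(\tilde z_{k-1})+\bar B[\tilde h(\tilde z_{k-1})]_+\ge f_k(z_k^*)+\tfrac{\rho_f}{2}\|\tilde z_{k-1}-z_k^*\|_2^2$, with the inherited infeasibility of $\tilde z_{k-1}$ costing $\bar B\Delta$ per step and $\bar B V_0$ at $k=1$. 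This is exactly how the cited theorem is proved.

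One step deserves more scrutiny than your sketch gives it. The stationarity residual is \emph{first order} in the movement, $\mathrm{dist}\big(\nabla f(z_k^*)+\lambda_k^*\nabla\tilde h(z_k^*),-\mathcal N(z_k^*;\mathcal Z)\big)\le 2(\rho_f+\bar B\rho)\|z_k^*-\tilde z_{k-1}\|_2$, so averaging over the uniform index $\hat k$ and invoking Jensen, as you propose, yields only $\mathcal O(1/\sqrt K)+\mathcal O(\sqrt\Delta)$ for that condition, not the claimed $\epsilon=\mathcal O(1/K)+\mathcal O(\Delta)$. The stated rate is correct for the \emph{squared} residual, which is how \cite{boob2019stochastic} actually measures approximate stationarity; the paper's restatement of \Cref{def:KKT} (which, incidentally, also drops the $\lambda$ in front of $\nabla\tilde h(z)$) blurs this, so the mismatch originates in the paper's adaptation rather than in your argument---but as written, your proof certifies a statement with a weaker rate in that one condition than the lemma literally asserts, and you should either square the residual in the definition or accept $\mathcal O(1/\sqrt K)$ there. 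A smaller issue of the same kind: your residual bounds carry the constant $\rho=(2\alpha\rho_g+\rho_g^2)/(2\alpha)$ from \Cref{lemma:lipschizh}, which does not appear in the stated $\epsilon$, so the literal constant $16(\bar B+1)/\rho_f$ will not fall out of the sketch without the exact bookkeeping of the cited theorem.
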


Applying \Cref{thm:outerloop} with $\Delta= \mathcal{O}(\tfrac{1}{T^2}) + \mathcal{O}(e^{-N})$, we conclude that $\tilde{z}_{\hat{k}}$ is an $\epsilon$-KKT point of \cref{eq:reform3} where $\epsilon$ is specified below as
\begin{equation*}
    \epsilon =\mathcal{O}\left(\frac{1}{K}\right) +  \mathcal{O}\left(\frac{1}{T^2}\right)+ \mathcal{O}\big(e^{-N}\big).
\end{equation*}
This completes the proof.

\end{document}